\documentclass[11pt]{amsart}
\usepackage{amsmath, amsthm, amssymb, amsfonts, verbatim, color}
\usepackage[pdftex]{graphicx}
\usepackage[bookmarks]{hyperref}
\addtolength{\topmargin}{-.25in}
\setlength{\textwidth}{6in}       
\setlength{\oddsidemargin}{.25in}              
\setlength{\evensidemargin}{.25in}         
\setlength{\textheight}{8.5in}

\reversemarginpar

\definecolor{darkgreen}{rgb}{0,0.55,0}

\newtheorem{thm}{Theorem}[section]

\newtheorem{lem}[thm]{Lemma}
\newtheorem{prop}[thm]{Proposition}
\theoremstyle{definition}

\theoremstyle{remark}
\newtheorem{rem}[thm]{Remark}

\numberwithin{equation}{section}
\numberwithin{thm}{section}

 \newcommand{\norm}[1]{\left\Vert#1\right\Vert}
\newcommand{\ip}[1]{\langle{#1}\rangle}

\newcommand{\abs}[1]{\left\vert#1\right\vert}
\newcommand{\R}{{\mathbb{R}}}
\newcommand{\C}{{\mathbb{C}}}
\newcommand{\Z}{{\mathbb{Z}}}


\def\calE{\mathcal E}
\def\calN{\mathcal N}
\def\calU{\mathcal U}

\newcommand{\yt}{{y^\tau}}
\newcommand{\yn}{{y^\nu}}

\def\be{\begin{equation}}
\def\ee{\end{equation}}

\def\g{\sqrt{-g}}
\def\a{\alpha}
\def\b{\beta}
\def\e{\epsilon}
\def\vp{\varphi}
\def\ga{\gamma}
\def\L{\mathcal L}
\def\A{\mathcal A}
\def\F{\mathcal F}
\def\D{\mathcal D}

\begin{document}
\title[Defects in the abelian Higgs model]{Topological defects in the abelian Higgs model}
\author[Czubak and Jerrard]{M. Czubak and R.L. Jerrard}
\begin{abstract}
 We give a rigorous description of the dynamics of the Nielsen-Olesen vortex line.  In particular,  given a worldsheet of a string, we construct initial data such that the corresponding solution of the abelian Higgs model will concentrate near the evolution of the string.  Moreover, the constructed solution stays close to the Nielsen-Olesen vortex solution.  
\end{abstract}
\date{\today}
\maketitle

\section{Introduction}
In 1973 Nielsen and Olesen \cite{NielsenOlesen} conjectured a relationship between the abelian Higgs model  in $\R^{1+3}$ and the Nambu-Goto action. 
In this paper we show that their conjecture follows from a conjecture
of Jaffe and Taubes about the 2-dimensional Euclidean abelian Higgs model.
In particular, since the Jaffe-Taubes conjecture is known to hold for a range of values of
a coupling parameter appearing in the abelian Higgs model, our results show that
the Nielsen-Olesen scenario holds in these situations.

The abelian Higgs model (see \eqref{lag} below) arises in various branches of physics: in high-energy physics, as perhaps the simplest Yang-Mills-Higgs theory; in solid-state physics, in connection with superconductivity; and in cosmology where, for reasons stemming from its relevance to high-energy physics, it provides a basis for studies of the possible behavior of cosmic strings, should any such objects exist.

The Nambu-Goto action  of a $(1+1)$-dimensional string in $(1+3)$-dimensional Minkowski space is (proportional to) the Minkowski area of its worldsheet, see \eqref{NambuGoto} below for a precise formulation.  The associated equations of motion are exactly
the condition that the Minkowski mean curvature of the worldsheet vanishes.
This is  the simplest natural model for the relativistic dynamics of a string in Minkowski space.
We refer to a solution of the equations of motion as  a ``timelike Minkowski minimal surface",
by analogy with ordinary (Euclidean) minimal surfaces.
The action is due to Nambu \cite{Nambu} and Goto \cite{Goto}, and it has its origins in the early days of string theory, as a description
of the evolution of a closed (dual) string. 
See \cite{Goddard08} for a nice historical perspective.  

The relationship between these two models proposed in \cite{NielsenOlesen} is
that solutions of the  abelian Higgs model exhibit, for suitable initial data, 
features known as vortex lines that, Nielsen and Olesen argued, should sweep
out worldsheets that are approximately governed by the Nambu-Goto action.
This proposal has subsequently been investigated particularly intensively by cosmologists
interested in possible cosmic strings, starting with work of Kibble \cite{Kibble76}. 
Models for cosmic strings assume that some form of Yang-Mills-Higgs (YMH) equation, perhaps arising from some yet-unknown grand unified theory, is relevant to descriptions of the distribution of matter in the universe.
One can associate to a YMH model an object called the ``vacuum manifold", and
it is believed that qualitative features of solutions known as topological defects are determined by the topology of the vacuum manifold. 
In particular, string-like defects are expected to form
when the vacuum manifold has a nontrivial fundamental group. The
abelian Higgs model, for which the vacuum manifold is given by $S^1$,
provides the simplest case
of this scenario, and it is thus studied as a useful prototype for more general 
models whose vacuum manifold is not simply connected.


There is a large body of mathematics describing strings and other 
defects in solutions of elliptic and parabolic equations with vacuum
manifolds that are either disconnected or non-simply-connected. 
References and a
more detailed discussion may be found in \cite{Jerrard09}.

On the other hand, there is not a great deal of rigorous mathematical work describing dynamics of topological defects in
nonlinear hyperbolic equations, and most of it deals with defects
that can be thought of as point particles or $0$-dimensional defects, 
see for example \cite{StuartD, JerrardGL99, Lin99, GSeffective}.
Higher-dimensional defects are however treated in
\cite{BGN} and \cite{Jerrard09}. In particular, the latter work proves that
topological defects in certain semilinear hyperbolic equations,
including a non-gauged analog
of the abelian Higgs model, do indeed approximately sweep
out timelike minimal surfaces for suitable initial data. This covers the case of the \emph{domain wall}, one of the basic examples of topological defects considered by cosmologists, associated to the real scalar equation $\Box u + \e^{-2}(u^2-1)u= 0$.  Cosmic strings, as in the abelian Higgs model, have a much richer mathematical structure, and are also considered more likely to be present in our universe than domain walls.


The basic scheme we use here draws on that developed
in \cite{Jerrard09}.  To show that
this scheme works for a gauge theory such as the abelian Higgs model
we must, among other things, formulate and establish suitable stability
estimates, relating  energy and vorticity for the 2-dimensional 
{\em Euclidean} abelian Higgs model, and a large part of our
work is devoted to these tasks. 

We next present some necessary background about the abelian Higgs model,  the Nambu-Goto action, the 2d Euclidean abelian Higgs model and the Jaffe-Taubes conjecture, and
normal coordinates around a string. With this
done, we will finally state our main result.

\subsection{The abelian Higgs model}

We will write the Lagrangian for the abelian Higgs model in the form
\be
\mathcal L(\vp,\A)=\frac 12  \ip{D_{\a}\vp,D^{\alpha}\vp}+\frac {\e^2}4 \F_{\a\b}\F^{\a\b}+\frac{\lambda}{8\e^2}(\abs{\vp}^{2}-1)^{2}.
\label{lag}\ee
Here
\[
\vp: \R^{1+3}\to \C,\quad\quad
\A\mbox{  a $1$-form with components }\A_\a: \R^{3+1}\rightarrow \R
, \ \ \alpha\in 0,\ldots, 3,
\]
$D_\alpha$ denotes the covariant derivative
$
D_{\alpha}\vp := (\partial_{\alpha}-i\A_{\alpha})\vp,
$
and $\F := d\A$, so that
\[
\F_{\a\b} = \partial_a \A_\b - \partial_\b \A_\a, \qquad\a,\b\in \{0,\ldots, 3\}.
\]
One may regard $\A$ as a $U(1)$ connection and $\F$ as the associated curvature.
We write $\ip{f,g}$ to denote the {\em real} inner product
\[
\ip{f,g}= \mbox{Re} (f\overline g).
\]
In \eqref{lag} we  sum over repeated upper and lower indices, and
we raise and lower indices with the Minkowski metric
$(\eta^{\a\b}) = (\eta_{\a\b}) = \mbox{diag}(-1,1,1,1)$
so that
\[
\ip{D_{\a}\vp,D^{\alpha}\vp} = \eta^{\a\b}\ip{D_{\a}\vp,D_{\b}\vp},
\quad\quad
\F_{\a\b}\F^{\a\b} = 
\eta^{\a\gamma} \eta^{\b \delta} \F_{\a \b}\F_{\gamma\delta}.
\]
We will consider the scaling $0<\e\ll 1$, which is relevant to models describing cosmic strings,
where typically $\e \sim 10^{-16}$ in the units we have (implicitly) chosen. 

We remark that the Lagrangian \eqref{lag}  is invariant under action of the $U(1)$ group, so for any sufficiently smooth function $X:\R^{1+3}\rightarrow \R$ we have
\[
\mathcal L(\vp,\A)=\L( e^{iX} \vp, \A+ dX).
\]

The Euler-Lagrange equations associated to the action functional
$\int_{\R^{1+3}} \L(\vp, \A)$ are
\begin{align}
-D_{\a}D^{\a}\vp  +\frac {\lambda} {4\e^{2}} (\abs{\vp}^{2}-1)\vp=0,
\label{ahm1}\\
-\e^{2}\partial_{\a}\F^{\a\b}-  \eta^{\a\b} \ip{i\vp, D_\a\vp} \ = \ 0.
\label{ahm2}\end{align}
Our main theorem describes the behavior of certain solutions of this system,
for well-chosen initial data.

\subsection{the Nambu-Goto action: timelike minimal surfaces}\label{S:n-g}
The worldsheet of a closed string may be described by a function
$H:(-T,T)\times S^1\rightarrow (-T,T)\times \R^3$
of the form
\begin{equation}
H(y^{0},y^{1})=(y^{0}, h(y^{0}, y^{1})) \quad\qquad\mbox{ for some  $h: (-T,T)\times S^1 \to \R^3$.}
\label{gamma1}\end{equation}
Here and throughout this paper, $S^1$ denotes $\R/L\Z$ for some $L>0$,
so that $S^1$ is a circle of arbitrary positive length $L$.
We write $\Gamma$ for the image of such a map $H$, and the induced metric on $\Gamma$
is denoted by
\[
\gamma_{ab} = \eta_{\a \b} \partial_a H^\a \partial_b H^\beta, \quad  a,b \in \{0,1\},
\]
where we implicitly sum over repeated indices $\alpha, \beta = 0,\dots, 3$.
A surface $\Gamma$ is said to be {\em timelike} if $\det(\gamma_{ab})<0$
at every point in $(-T,T)\times S^1$.
The Nambu-Goto action is proportional to 
\begin{equation}
\mathcal {NG}( H):= \int \sqrt{-\gamma}
\quad\qquad
\ \mbox{ with } \ \gamma : = \det(\gamma_{ab}).
\label{NambuGoto}\end{equation}
A timelike surface $\Gamma = \mbox{Image}(H)$ is called a minimal surface if $H$ is a critical point
of $\mathcal {NG}$.

A timelike minimal surface may be written in
conformal coordinates, in which case 
\begin{equation}
\gamma_{01} = \gamma_{10} = 0, \qquad 
-\gamma_{00} = \gamma_{11}.
\label{conformal}\end{equation}
This is well-known in the physics literature, see for example \cite[Section 6.2]{vs}, and is
proved in \cite{BHNO}. We will always assume  $H$ is a smooth timelike\footnote{
It follows from results in \cite{NT}, \cite{JNO}, or by inspection of  \eqref{our_surface}, that $T < L$ if the image of $h_0'$ contains antipodal points in $S^2$. }  embedding
on $(-T,T)\times S^1$ and that \eqref{gamma1}, \eqref{conformal} hold.
With a conformal parametrization, \eqref{conformal}, $h(y^0,y^1)$ may be written in
the form
$\frac 12(a(y^0+y^1) + b(y^0-y^1))$ for functions $a,b:S^1\to \R^3$ such that $|a'| = |b'| = 1$, and conversely 
every map of this form parametrizes a minimal surface. In particular, if $h_0:S^1\to \R^3$
is an arclength parametrization of a smooth embedded curve $\Gamma_0=\mbox{image}(h_0)$, then
the timelike minimal surface that agrees with $\Gamma_0$ at time $t=0$ and with
zero initial velocity can be written in the form \eqref{gamma1}, with
\be
h(y^0,y^1) = \frac 12 ( h_0(y^1+y^0) + h_0(y^1-y^0)).
\label{our_surface}\ee
This is the situation that we will always consider, although we will rarely need the 
explicit formula \eqref{our_surface}. 

Since $H$ is smooth and $\det \gamma<0$ in $(-T,T)\times S^1$,
it is  clear that for every $T_1<T$, there exists some $c_0 = c_0(T_1)>0$ such that
\begin{equation}
\gamma_{11} = - \gamma_{00} \ \ge c_0
\quad\mbox{ for all $(y^0,y^1)\in (-T_1, T_1)\times S^1$. }
\label{gamma3}
\end{equation}

\subsection{The Euclidean abelian Higgs model in 2 dimensions}\label{S:2deuc}


We will write the $2d$ abelian Higgs energy density in the form
\be
e^\nu_{\e,\lambda}(U) := 
\frac 12 (|D_1\phi |^2 +|D_2\phi |^2) + \frac {\e^2}4(F_{12})^2 + \frac \lambda{8\e^2} (|\phi |^2-1)^2.
\label{enel}\ee
Here $U = (\phi, A)$, where
$\phi\in H^1_{loc}(\R^2;\C)$ and $A= A_1 dy^1 + A_2 dy^2$ is a $1$-form
with  components in $H^1_{loc}$. We write as usual $F_{12} = \partial_1 A_2 - \partial_2 A_1$,
so that $dA = F_{12} dy^1\wedge dy^2$.
A {\em finite-energy configuration} is a pair $U = (\phi, A)$ such that 
$e^\nu_{\e,\lambda}(U)\in L^1(\R^2)$.


Note that $\e$ is just a scaling parameter in \eqref{enel}, and one can easily change variables to set $\e=1$. 
That is, given a configuration $U = (\phi,A)$, if we define $U^\e=(\phi^\e,A^\e)$ by $\phi^\e(y) := \phi(\frac y \e), A^\e(y) := \frac 1{\e} A(\frac y\e)$, then 
\be
e^\nu_{\e,\lambda}(U^\e)(y) \ = \frac 1{\e^2} \, e^\nu_{1,\lambda}(U)(\frac y \e),\quad\quad
\mbox{ and thus }
\int_{\R^2} e^\nu_{\e,\lambda}(U^\e) \ =  \ 
\int_{\R^2} e^\nu_{1,\lambda}(U) .
\label{scale_e}\ee
However,  we find it convenient to include the scaling parameter $\e$
in the energy.

We define the (2-dimensional) current $j(U)$ and vorticity $\omega(U)$, given by
\begin{align}
j(U) &:= ( \ip{ i\phi, D_1 \phi} ,  \ip{ i\phi, D_2 \phi} ),
\label{j.def}\\
\omega(U &)=\frac 12\nabla \times(j(U) +A)  \ = \ \frac 12 \big[ \partial_1 j_2(U) - \partial_2j_1(U) + F_{12}\big].
\label{omega.def}\end{align}
As we will recall in slightly more detail in Section \ref{S:euclidean2d}, if $U$ is a finite-energy
configuration then
$\omega(U)\in L^1(\R^2)$, and moreover
\be
\int_{\R^2} \omega(U) \ dy \in \pi \Z \quad\quad\mbox{ for every finite energy }U.
\label{quant}\ee
It follows that every finite-energy $U$ belongs to exactly one of the sets
\be
H_n := \{ U = (\phi, A)\in H^1_{loc}\times H^1_{loc} : \int_{\R^2} e^\nu_{\e,\lambda}(U) < \infty, \int_{\R^2} \omega(U) = \pi n \}.
\label{Hn.def}\ee
(These sets are called  {\em weak homotopy classes} by Rivi\`ere \cite{Riviere02}, who 
establishes a slightly different description of them.)
Note also that, while $\int_{\R^2} e^\nu_{\e,\lambda}(U)$ certainly depends on
$\lambda$, the condition $ \int_{\R^2} e^\nu_{\e,\lambda}(U)<\infty$ is independent of
$\lambda$, and hence the homotopy classes $H_n$ are also  independent of $\e$ and $\lambda$.
We will use the notation 
\be
\calE^\lambda_n := \inf \{ \int_{\R^2} e^\nu_{\e,\lambda}( U) \ : \ U \in H_n\}.
\label{Alambdan.def}\ee
Our main results describe solutions of the $1+3$-dimensional abelian Higgs model
in terms of solutions, when they exist, of the $2d$ minimization problem:
\be
\mbox{ find $U^{m} = U^{m}_{\e,\lambda}\in H_m$ such that }\qquad
\int_{\R^2} e^\nu_{\e,\lambda}(U^{m}) = \calE^\lambda_m.
\label{2dmin}\ee
For $0<\e\ll1$, the regime that interests us, we  always assume that a minimizer
$U^m_{\e,\lambda}$ is obtained by starting from a fixed minimizer of the $\e=1$
problem and scaling as in \eqref{scale_e}, so that the energy and vorticity concentrate 
near the origin.



\begin{rem}
For every $n\in \Z$ and $\lambda>0$, there exists  an {\em equivariant} $U^{(n)}\in H_n$
solving the Euler-Lagrange equations associated to the minimization problem \eqref{2dmin}, see \cite{
BergerChen}. Here ``equivariant" implies for example that $\phi^{(n)}$ can be written in the form $f(r)e^{in\theta}$.
The equivariant solution is known to be linearly stable if $|n|= 1$ or $\lambda \le 1$, and linearly unstable (and hence not
even a local energy minimizer) if $\lambda>1$ and $|n|\ge 2$, see Gustafson and Sigal \cite{GSstab}.
\end{rem}

\begin{rem}
\label{rem_JTconj}
The  conjecture of Jaffe and Taubes \cite[Chapter III.1, Conjectures 1 and 2]{JaffeTaubes} mentioned earlier holds
that the equivariant solution solves problem \eqref{2dmin} for all parameter
values for which it is linearly stable, and that no minimizer exists whenever the equivariant
solution is linearly unstable.
This  is known to be true in the case $\lambda=1$, which has a special structure that
will be recalled in Section \ref{S:euclidean2d}, and for all sufficiently large $\lambda$,
due to work of Rivi\`ere \cite{Riviere02}. Otherwise it is open, as far as we know.
\end{rem}


\begin{rem}\label{rem_linear_growth}
In Theorem  \ref{T.exist} we establish a general sufficient  condition, 
involving the behavior of the map $m\mapsto \calE^\lambda_m$,  
for existence of solutions of problem \eqref{2dmin}.
In particular we deduce from this that a minimizer exists for $|n|=1$ and $\frac 15 < \lambda < 5$.


Theorem \ref{T.exist} implies in particular that if $\lambda$ satisfies
\be
\calE^\lambda_n \ge \calE^\lambda_1\ \ \ \mbox{ for } n >1,
\label{E1minimal}\ee
then problem \eqref{2dmin} has a solution
for $|n|=1$. 
Condition  \eqref{E1minimal} is known to hold
for large $\lambda$, see \cite{Riviere02}, and
we note in Lemma \ref{lineargrowth}  that it is easily
verified for $\frac 12 \le \lambda \le 2$. It is expected
that \eqref{E1minimal} holds for all $\lambda>0$, and
more generally that  $n\mapsto \calE^\lambda_n$ is increasing
for $n\in \mathbb N$. (It is easy to check that 
$\calE^\lambda_{-n} = \calE^\lambda_n$ for all $n$.)
A statement similar to \eqref{E1minimal} is proved
for  certain {\em non-gauged} generalized Ginzburg-Landau-type models
by Almog {\em et al} in \cite{AlmogEtAl}, but adapting their arguments 
to the gauged case seems not to be easy.
\end{rem}


\subsection{Normal coordinates}\label{cov}
Next we describe a useful coordinate system,
which we will refer to as {\em normal coordinates},
for a neighbourhood of a minimal surface $\Gamma$.
A key point in our analysis (as in \cite{Jerrard09}) 
will be to obtain estimates in these coordinates.

Given a minimal surface $\Gamma$, always assumed to be 
represented via a conformal parametrization $H: (-T,T)\times S^1\to \R^{1+3}$,
see  \eqref{gamma1}, \eqref{conformal},
we will parametrize a neighborhood of $\Gamma$ by
$(-T,T)\times S^1\times \R^2$,
and we will write points in this set as
\be
\mbox{$y = (\yt, \yn)$, \ \  with $\yt = (y^0,y^1) \in (-T,T)\times S^1$ and $\yn = (y^{2}, y^3)\in \R^2$,}
\label{tn.notation}
\ee
where the superscripts stand for ``tangential'' and ``normal'' respectively.
We will also sometimes write 
\be
(y^{\nu1}, y^{\nu2}) = (y^2, y^3).
\label{tn.n1}\ee
We will arrange that  $y^0$ is a timelike coordinate and $y^1,\ldots, y^3$ spacelike. 

To define these coordinates, we first fix maps 
$\bar \nu_i:(-T,T)\times S^1\to  \R^{1+3}, i=1,2$ such that 
\be
\eta_{\a\b} \bar \nu_i^\a  \bar \nu_j^\b = \delta_{ij}, 
\qquad\qquad
\eta_{\a\b} \bar \nu_i^\a  \partial_{0}H^\beta = 
\eta_{\a\b} \bar \nu_i^\a  \partial_{1}H^\beta = 0
\label{barnu}\ee
for $i,j = 1,2$.
In other words, $\{ \bar \nu_i(\yt)\}_{\i=1}^2$
is an orthonormal frame (with respect to the Minkowski metric) 
for the normal bundle to $\Gamma$
at $H(\yt)$.
We then define $\psi: (-T,T)\times S^{1}\times \R^2 \rightarrow\R^{1+3}$
by
\be
\psi(y)=H(\yt)+\bar{\nu_{1}}(\yt)y^{2}+\bar{\nu_{2}}(\yt)y^{3}  \ = \ 
H(\yt)+\bar{\nu_{1}}(\yt)y^{\nu1}+\bar{\nu_{2}}(\yt)y^{\nu2} .
\label{psi.def}\ee
Writing $B_\nu(\rho) :=  \{ \yn = (y^{\nu 1},y^{\nu2}) \ : |\yn|<\rho\}$, we will  restrict $\psi$ to
a set of the form $(-T_1,T_1)\times S^1\times B_\nu(\rho_0)$ on which $\psi$ is injective
and satisfies other useful properties; see Section \ref{ahm} for details.

Since our argument will rely heavily on specific properties of the abelian Higgs
model when written with respect to the new coordinates, we find it useful
to distinguish between the Higgs field and connection when written
in  terms of the original, standard coordinates for Minkowski spacetime,
which we will write $(\varphi, \A)$, and the 
same objects written in terms of the new coordinates, which
we will denote $(\phi, A)$. 
These are related by
\be
\phi = \varphi \circ \psi,\quad\quad\quad A = \psi^* \A,\quad\mbox{ so that }
A_\alpha = \partial_\a\psi^\beta\,  \A_\beta \circ \psi
\label{cvnotation}\ee
on $\mbox{domain}(\phi) = (-T_1,T_1)\times S^1\times B_\nu(\rho_0)$.
The components of the curvature in the two coordinate systems will be
denoted $\F_{\a\b}$ and $F_{\a\b}$ respectively. We will also write 
$\calU$ to denote a pair $(\vp, \A)$ and similarly $U$ for a pair $(\phi, A)$,
and we will write 
\be
U = \psi^* \calU\quad\quad\mbox { when $\calU$ and $U$ are related as in \eqref{cvnotation}.}
\label{cvn1}\ee
We will also write $\calU = (\psi^{-1})^* U$ to indicate that \eqref{cvnotation} holds.

\subsection{main theorem}

Our main result, stated below, asserts the existence of a solution whose energy concentrates around a minimal surface $\Gamma$, and that in a neighborhood of
$\Gamma$ is close to a configuration 
that in the $y$ coordinates takes the form
$U^{\mbox{\scriptsize N\!O}}=(\phi^{\mbox{\scriptsize N\!O}}, A^{\mbox{\scriptsize N\!O}})$,
with
\be\label{N-O}
\phi^{\mbox{\scriptsize N\!O}}(\yt, \yn) := 
\phi^m(\yn),
\qquad
A^{\mbox{\scriptsize N\!O}}(\yt, \yn)
:=  A^{m}_1(\yn) dy^{\nu1} +  A^{m}_2(\yn) dy^{\nu2},
\ee
where $U^m = (\phi^m, A^m) = U^m_{\e,\lambda}$ is a ground state of the $2d$ minimization
problem \eqref{2dmin}.
Thus, in standard coordinates this configuration can be written
\be
\calU^{\mbox{\scriptsize N\!O}} =(\varphi^{\mbox{\scriptsize N\!O}}, \A^{\mbox{\scriptsize N\!O}})
= (\psi^{-1})^* U^{\mbox{\scriptsize N\!O}}.
\label{form1}\ee
Note that $\calU^{\mbox{\scriptsize N\!O}}$ is only defined in 
the domain of $\psi^{-1}$, which is a neighborhood of $\Gamma$.

\begin{thm}\label{mainthm}
Let $\Gamma$ be a codimension $2$ timelike minimal surface,
given as the image of  a conformal parametrization $H$ (so that $H$ satisfies \eqref{gamma1}, \eqref{conformal}) that is a smooth embedding in $(-T,T)\times S^1$.  
Assume also that the initial velocity of  $\Gamma$ at $t=0$ is everywhere $0$.
 
\medskip
 
Let  $\lambda>0$  and $m \in \Z$ be such that the 2d minimization
problem \eqref{2dmin} has a solution,  and  in addition
assume that $\calE^\lambda_m \le \calE^\lambda_n$ whenever $|m| \le |n| $.

\medskip

Then, given $T_0 <T$, there exists an neighborhood $\calN_1\subset \mbox{Image}(\psi)$
of $\Gamma$ in $(-T_0,T_0)\times \R^3$, and a constant $C$, both
independent of $\e$,
such that  given $\e\in (0,1]$, there exists a solution
$\calU$ of the abelian Higgs model \eqref{ahm1}, \eqref{ahm2}
satisfying the following estimates. First,
\be
\int_{\calN_1} | \vp - \vp^{\mbox{\scriptsize N\!O}}|^2 + \e^2 |\A - \A^{\mbox{\scriptsize N\!O}}|^2 \le C \e^2
\label{T1.c1}\ee
in a suitable gauge, for $\calU^{\mbox{\scriptsize N\!O}}$ defined in \eqref{form1}. Second,
\be
\int_{ \calN_1}  (d^\nu)^2
\left( |D\varphi|^2 + \e^2 |\F|^2 + \frac \lambda{8\e^2}(|\varphi|^2-1)^2 \right) \ dx \ dt \ \le C \e^2,
\label{T1.c3}\ee
where $d^\nu:\calN_1\to \R$ is the distance in normal coordinates to $\Gamma$,
so that
$d^\nu\circ \psi(y) := |\yn|$. And finally,
\be
\int_{\left[(-T_1,T_1)\times \R^3\right]\setminus \calN_1}  |D\varphi|^2 + \e^2 |\F|^2 + \frac \lambda{8\e^2}(|\varphi|^2-1)^2  \ dx \ dt \ \le C \e^2.
\label{T1.c2}\ee
\end{thm}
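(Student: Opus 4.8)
\medskip
\noindent\emph{Plan of proof.}
The strategy adapts the scheme of \cite{Jerrard09} to the gauge setting: one constructs initial data adapted to $\Gamma$, solves the Cauchy problem, and then uses conservation of energy together with a quantitative stability estimate for the $2d$ Euclidean abelian Higgs model and the minimal surface equation for $\Gamma$ to trap the solution near $\calU^{\mbox{\scriptsize N\!O}}$. First I would construct the initial data. Since the initial velocity of $\Gamma$ vanishes, the slice $\Gamma_0 = \mbox{image}(h_0)$ is an arclength-parametrized embedded curve, and the natural choice is ``instantaneously static'' field data: working in the normal coordinates $\psi$ on a tube $\{0\}\times S^1\times B_\nu(\rho_0)$ about $\Gamma_0$, interpolate with a cutoff between $U^{\mbox{\scriptsize N\!O}}$ near the core $\yn=0$ and the vacuum $(1,0)$ near $|\yn|=\rho_0$, transport back to standard coordinates via $(\psi^{-1})^*$, extend by the vacuum, and prescribe vanishing electric field and $\p{t}\vp=0$ in the temporal gauge $\A_0\equiv 0$. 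One checks that the Gauss-law constraint (the $\b=0$ component of \eqref{ahm2}) holds because $D_0\vp=0$ on the initial slice, and that the total energy is
\[
E(0) \ = \ \calE^\lambda_m\cdot\mbox{length}(\Gamma_0) \ + \ O(\e^2),
\]
the cutoff error being exponentially small since the Nielsen--Olesen profile decays exponentially on scale $\e$, and the curvature error being $O(\e^2)$ because the first-order correction to the metric $\psi^*\eta$ is odd in $\yn$ and so integrates to zero against the (reflection-symmetric, exponentially concentrated) energy density of the minimizer. Standard energy-space well-posedness for \eqref{ahm1}--\eqref{ahm2} then yields a smooth solution $\calU$ on the required time interval, with $E(t)\equiv E(0)$.

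The core is a continuity argument on $[0,T_0]$ (and symmetrically on $[-T_0,0]$). On the maximal subinterval where a coarse form of \eqref{T1.c1}--\eqref{T1.c3} holds, one has: by \eqref{quant} applied on normal $2$-planes, the vorticity at time $t$ is concentrated near a closed $1$-current of multiplicity $\pi m$ lying in the tube; and the quantitative $2d$ stability estimate --- relating the $2d$ energy excess above $\calE^\lambda_m$ to the gauge-fixed $H^1$-distance from the Nielsen--Olesen minimizer, which is precisely the family of $2d$ estimates the paper is built to supply --- gives a slicewise coercive lower bound for $E(t)$, while the hypothesis $\calE^\lambda_m\le\calE^\lambda_n$ for $|m|\le|n|$ (together with $\calE^\lambda_1>0$) forbids vorticity from splitting off or escaping the tube without a definite energy cost. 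Hence the energy excess $E(t)-\calE^\lambda_m\,\mbox{length}(\Gamma_0)$, which is $O(\e^2)$ by conservation, controls both the deviation of $\calU$ from the Nielsen--Olesen profile and the $\yn$-weighted energy $\int (d^\nu)^2 e^\nu_{\e,\lambda}$. To pin the concentration set to $\Gamma_t$ itself rather than to some other evolving curve, one differentiates in $t$ a modulated-energy functional $\mathcal{D}(t)$ combining this excess with $\int d(x,\Gamma_t)^2\, e^\nu_{\e,\lambda}$; using the wave structure of \eqref{ahm1}--\eqref{ahm2}, the conformal identities \eqref{conformal}, the lower bound \eqref{gamma3}, and crucially the fact that $\Gamma$ solves the minimal surface equation, one obtains $\mathcal{D}'(t)\le C\mathcal{D}(t)+C\e^2$ with $\mathcal{D}(0)\le C\e^2$, so Gronwall gives $\mathcal{D}(t)\le C\e^2$ on $[0,T_0]$. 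This improves the coarse bounds, closes the continuity argument, and yields \eqref{T1.c1} (from the deviation term), \eqref{T1.c3} (from the $(d^\nu)^2$-weighted term, which for $\calU^{\mbox{\scriptsize N\!O}}$ itself is $O(\e^2)$ by the scaling \eqref{scale_e}), and \eqref{T1.c2} (since the ``expected'' energy outside the fixed neighbourhood $\calN_1$ is exponentially small, so whatever sits there is part of the $O(\e^2)$ excess).

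\noindent\emph{Main obstacle.} The principal difficulty is the $2d$ stability estimate and its deployment in moving, curved normal coordinates: one must prove that near-minimal energy in a fixed weak homotopy class $H_m$ forces proximity to a minimizer \emph{in a good gauge} (the gauge choice, uniform in $\e$, is itself delicate, since the connection is only gauge-equivalent to $A^{\mbox{\scriptsize N\!O}}$), and one must control the genuinely $(1+3)$-dimensional quantities absent from the static $2d$ picture --- the electric field $F_{0j}$, the mixed tangential--normal curvature components $F_{\tau\nu}$, and the tangential covariant derivatives $D_\tau\phi$ --- showing they are lower order, and that the metric and first-variation errors incurred in writing \eqref{ahm1}--\eqref{ahm2} in $\psi$-coordinates contribute only $O(\e^2)$ to the Gronwall inequality. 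Making the continuity argument actually close --- in particular ruling out escape of vorticity from the tube and ensuring the concentration set tracks $\Gamma_t$ and no other timelike minimal surface --- depends on all of these ingredients meshing with the topological rigidity of the classes $H_m$.
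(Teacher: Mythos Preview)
Your overall architecture matches the paper's: construct well-prepared initial data in normal coordinates via $\psi$, invoke well-posedness in the temporal gauge, and close a Gronwall inequality on a modulated energy in normal coordinates, using the minimality of $\Gamma$ to kill the first-order terms. The initial-data construction and the energy accounting $E(0) = \calE^\lambda_m\,|S^1| + O(\e^2)$ are essentially what the paper does.

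However, you misidentify the key $2d$ ingredient. You propose to use a \emph{quantitative stability estimate} --- that near-minimal $2d$ energy in $H_m$ forces gauge-fixed $H^1$-proximity to the minimizer --- and describe this as ``precisely the family of $2d$ estimates the paper is built to supply.'' The paper neither proves nor uses any such coercivity statement. Its $2d$ machinery is the \emph{vorticity confinement functional} $\D^\nu_m(U;R) = \pi m - \int f(|\yn|)\,\omega(U)$ of Section~\ref{S:Dnu}, together with two much softer facts: (i) if $\D^\nu_m$ is below a fixed threshold then the $2d$ energy is bounded \emph{below} by $\calE^\lambda_m - C\e^2$ (Proposition~\ref{prop1}, via an extension argument and the hypothesis $\calE^\lambda_m \le \calE^\lambda_n$ for $|n|\ge|m|$), and (ii) the change in $\D^\nu_m$ along $y^0$ is controlled by the tangential and $|\yn|^2$-weighted normal energy (Proposition~\ref{prop2}). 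The Gronwall loop (Proposition~\ref{prop3}) then runs on the triple $(\zeta_1,\zeta_2,\zeta_3)$, with $\zeta_2 = \int_{S^1}|\D^\nu_m|$, and the lower bound (i) is what turns control of $\zeta_1+\zeta_2$ into control of $\zeta_3$. No proximity-to-minimizer statement enters anywhere.

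This matters because the $H^1$-stability estimate you invoke would require spectral or {\L}ojasiewicz-type information about the linearization at the minimizer, uniformly in $\e$ and in a well-chosen gauge; this is not available in the paper and is only partially known in the literature. The vorticity-confinement route bypasses it entirely. Correspondingly, conclusion \eqref{T1.c1} is \emph{not} extracted from a ``deviation term'' in the modulated energy: it is obtained at the very end by passing to the gauge $A_0=0$ in normal coordinates and applying a Poincar\'e inequality in $y^0$, using that in this gauge $|\partial_{y^0}(\phi-\phi^{\mbox{\scriptsize N\!O}})|^2 + \e^2\sum_\a|\partial_{y^0}(A_\a-A_\a^{\mbox{\scriptsize N\!O}})|^2 = |D_0\phi|^2 + \e^2|F_\tau|^2$, which is controlled by $\tilde\zeta_3$, together with the $O(\e^2)$ bound on the $y^0=0$ slice coming from the construction of the data.
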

\begin{rem}
From the construction of the initial data and conservation of energy, we will have $\int_{ \{ t \}\times \R^3}|D\varphi|^2 + \e^2|\F|^2 \ge C >0$ for
every $t$. Thus \eqref{T1.c2}, \eqref{T1.c3} contain highly nontrivial information about energy
concentration around $\Gamma = \{ d^\nu = 0\}$.
\end{rem}
\begin{rem}
In fact we prove a more general stability result, giving
estimates for any solution that at $t=0$ has a vortex filament near $\Gamma_0$
and satisfies certain smallness conditions. For details see Proposition \ref{mainprop2},
from which one can also extract further estimates satisfied by the particular solution 
$\calU$ of Theorem \ref{mainthm}.
\end{rem}

\begin{rem}The hypotheses on $\lambda$ and $m$ are known to be satisfied for
\begin{itemize}
\item  $|m|=1$
and $\lambda\in [ \frac 12, 2]$. This follows from Lemma \ref{lineargrowth} and Theorem \ref{T.exist} below.
\item $|m|=1$ and all $\lambda$ larger than some $\lambda_0$, see \cite{Riviere02}.
\item $\lambda = 1$ and all $m\in \Z$, see \cite{JaffeTaubes}.
\item any $\lambda>0$, and $m$ minimizing $n\mapsto \calE^\lambda_n$ among
nonzero integers. This again follows from Theorem \ref{T.exist}. 
\end{itemize}
They are believed to hold for all $\lambda>0$ when $|m|=1$, and for all $m\in \Z$ when $\lambda\in (0,1)$.  (See the next remark and remarks \ref{rem_JTconj} and \ref{rem_linear_growth}) .
\label{rem.mlambda}\end{rem}

\begin{rem}Nielsen and Olesen \cite{NielsenOlesen} and authors in the
subsequent physics literature have in mind solutions of the form
$\calU \approx \calU^{\mbox{\scriptsize N\!O}}$, where $\calU^{\mbox{\scriptsize N\!O}}$ satisfies
\eqref{N-O}, and  $U^m$
is an equivariant $m=1$ solution of the $2d$ abelian Higgs model.  
Thus, their exact scenario is established in
Theorem \ref{mainthm} 
in the cases ($\lambda = 1$ or $\lambda$ large) 
when the $m=1$ minimizer is known to (exist and)
coincide with the equivariant solution. A full proof of their
conjecture would involve showing that the $m=1$ 
equivariant solution
is  a minimizer of the 2d Euclidean energy 
in the weak homotopy class $H_1$ for every $\lambda$.
This is exactly the $m=1$ case of the conjecture of Jaffe and Taubes
mentioned in Remark \ref{rem_JTconj}.
\label{rem:N-O}\end{rem}

\begin{rem}The estimates obtained in \cite[Theorem 2]{Jerrard09} for the non-gauged analog of the
abelian Higgs model are similar to \eqref{T1.c3}, \eqref{T1.c2} but {\em much} weaker.
Indeed, they show that the total energy diverges  like $|\ln \e|$, whereas the weighted energy
(as in  \eqref{T1.c3}, \eqref{T1.c2}) is bounded as $\e\to 0$. Thus energy concentrates very
weakly around the manifold $\Gamma$. In addition, no useful estimate along the
lines of \eqref{T1.c1} is obtained in \cite{Jerrard09}.
\end{rem}

\begin{rem}  The assumption on $\Gamma$ to have zero initial velocity is there to avoid some technicalities of \cite{Jerrard09}.  We also use it in the construction of the initial data.  In principle, one could repeat the steps and assume nonzero velocity.  
\end{rem}

\subsection{global well-posedness for the abelian Higgs model}\label{s:gwp}
The abelian Higgs model is a $U(1)$ version of the Yang-Mills-Higgs (YMH), where the gauge group is in general nonabelian.  For the purposes of this article, we need the $1+3$ dimensional abelian Higgs model to be well posed for $H^1_{loc}\times H^1_{loc}$ data (this is made precise in Section \ref{s:id} below).  In addition, since we are going to rescale, we need the well-posedness for large data and for all time (we would like the analysis to hold at least for the existence of the time-like minimal surface).  Finally, we are interested in the topological behavior at infinity, $\abs{\varphi}\rightarrow 1$ instead of having $\abs{\varphi}\rightarrow 0$.  
\newline\indent  The strongest well-posedness result in the literature for YMH is due to Keel \cite{Keel}.  It shows global well-posedness of the $1+3$ solution in the energy class for any size data in the temporal gauge.  Moreover, the Higgs potential is taken to be energy critical, $V(\varphi)=\abs{\varphi}^p$ with $p=6$; the power six is the highest power that can be controlled using the Sobolev embedding by the kinetic part of the energy.   A power $p<6$ is called subcritical, and is significantly easier to handle.  Since we have a quartic potential, the global well-posedness for the abelian Higgs model we need, in the temporal gauge, is implied by \cite{Keel}.  The only detail left is then addressing $\abs{\phi}\rightarrow 1$ (see Section \ref{s:id} below). 
\newline\indent On the other hand, the proof in \cite{Keel} is more sophisticated than what we need, and not only because of the critical power of the potential.  An intermediate step leading to the global result is changing to a Coulomb gauge.  In the nonabelian case, the Coulomb gauge can be constructed only locally in space, and hence the nonabelian case is much more technical than the abelian one, where the global Coulomb gauge can be constructed.  
\newline\indent  Therefore, due to having subcritical potential and the abelian problem, heuristically speaking, the global well-posedness we need can also follow from the work done on the Maxwell-Klein-Gordon problem \cite{KM, ST}.  
\subsection{initial data}\label{s:id}
The solution $\calU$ that we find in Theorem \ref{mainthm} will be obtained by invoking
the results in \cite[Theorem 1.2]{Keel}. To do this we will impose the temporal gauge
\be
\mathcal A_0(0)=0\label{id3},
\ee
and we require initial data such that
\begin{align}
\vp(0), \mathcal A_i(0) &\in H^1_{loc}(\R^3)\label{id1},\\
\partial_t \varphi(0), \partial_t \mathcal A_i(0)&\in L^2_{loc}(\R^3), \ i=1, 2, 3,\label{id2}
\end{align}
with the compatibility condition (stated in the temporal gauge)
 \be\label{comp}
 \e^{2}\partial_i \partial_t\A_i(0)+\ip{i\vp(0),\partial_{t}\vp(0)}=0.
\ee
Note in particular that \eqref{id2}, \eqref{comp} hold if 
$\partial_t \varphi(0) = \partial_t \mathcal A_i(0)=0, \ i=1, 2, 3$, which will be the
case for us.
The initial data $\calU|_{t=0}$
is carefully constructed in the proof of Theorem \ref{mainthm} in Section 7. 
It has a rather explicit description near $\Gamma \cap \{t=0\}$,
in terms of the minimizer $U^m$ from \eqref{2dmin} and the diffeomorphism
$\psi$ from \eqref{psi.def}, and away from $\Gamma \cap \{t=0\}$ it has the form
 \be\label{flat_vacuum}
\vp(0) = e^{i q}, \qquad \A_i(0) = \frac{\partial q}{\partial x^i},\quad i=1, 2, 3.
\ee
for some smooth $q$. 
From these facts it follows that \eqref{id1} holds, and 
from  \cite{Keel} we then obtain a global solution in the temporal gauge.  

We note that in particular we will consider data such that
\eqref{flat_vacuum} holds in $\R^3\setminus B_R$ for some $R$. By
finite propagation speed and an easy explicit calculation,  
$\vp(t) = e^{iq}, \A(t) = dq$ is a solution on $|x|>R+t$.  By uniqueness, it must agree with the solution obtained using \cite{Keel}. 


\subsection{some notation}
As mentioned above, we implicitly sum over repeated upper and lower indices.
We  use the convention that greek indices $\alpha,\beta, \mu,\nu...$ run from $0$ to $3$, and latin indices $i, j, k....$ run from $1$ to $3$.  

For the convenience of the reader, we include the following summary of the different solutions we work with
\begin{itemize}
\item In normal coordinates: $U=(\phi, A)$ 
\item In the standard Minkowski coordinates: $\mathcal U=(\varphi, \mathcal A)$, and when applicable we use \eqref{cvn1} to relate $U$ and $\mathcal U$.
\item $U^{m}=U^m_{\epsilon, \lambda}$ solution of the 2D minimization problem \eqref{2dmin}
\item $U^{(m)}$ \emph{equivariant} solution of the 2D minimization problem 
\item $U^{\mbox{\scriptsize N\!O}}$ Nielsen-Olesen solution in the normal coordinates given by \eqref{N-O}
\item $\mathcal U^{\mbox{\scriptsize N\!O}}$ Nielsen-Olesen solution in the standard Minkowski coordinates given by \eqref{form1}
\end{itemize} 
\subsection{organization of this paper}
Sections \ref{S:euclidean2d} -  \ref{minimizer} deal with aspects of the 2d Euclidean abelian Higgs model needed for our main dynamical results. We start in Section \ref{S:euclidean2d} with some general background material. Section \ref{S:Dnu} introduces, 
and establishes some basic properties of,
what we call a {\em vorticity confinement functional}. This functional plays an important
role in the proof of Theorem \ref{mainthm}.
In Section \ref{minimizer} we prove Theorem \ref{T.exist},
giving a criterion for existence of solutions of the minimization problem \eqref{2dmin}.
As mentioned in Remark \ref{rem.mlambda} above, this result show that the hypotheses of
Theorem \ref{mainthm} are satisfied for a range of values of the parameters $m,\lambda$.
 
Sections \ref{ahm} and \ref{proof_mainthm} consider 
the abelian Higgs model in $1+3$-dimensional Minkowski space.
A basic ingredient in our analysis, as in \cite{Jerrard09}, is supplied by weighted
energy estimates in the normal coordinate system, introduced in
Section \ref{cov}. These estimates are proved in Section \ref{ahm},
using results about the vorticity confinement functional
from Section \ref{S:Dnu}. 
Finally, section \ref{proof_mainthm} is devoted to the proof of Theorem \ref{mainthm}.

\section{Energy and vorticity in 2 dimensions}\label{S:euclidean2d}

In the next three sections, we focus on Euclidean abelian Higgs model in 2 dimensions.
In this section we record some  facts, mostly well-known, relating the energy 
$e^\nu_{\e,\lambda}$ and the vorticity $\omega$, defined in \eqref{enel} and \eqref{omega.def} respectively.
We recall that the parameter $\e$ is just a scaling parameter, see \eqref{scale_e},
so that all results in this section reduce to the case $\e=1$. 
However, due to the role it plays elsewhere in this paper, it seems
useful to formulate things here for general $\e>0$.

First, by a direct computation we have the following identity, due to Bogomol'nyi:
\be\label{Btrick}
e^\nu_{\e,\lambda}(U)=\pm \omega(U) + \frac 12 \abs{(D_{1}\pm iD_{2})\phi}^{2}+\frac 12 \big (\e F_{12}\pm\frac 1{2\e}(\abs{\phi}^{2}-1)\big)^{2}+\frac{\lambda-1}{8\e^2}(\abs{\phi}^{2}-1)^{2}.
\ee
We emphasize that the identity holds pointwise. 
Note that \eqref{Btrick} implies that
\be
|\omega(U)| \le  \max\{ 1, \lambda^{-1}\} \ e_{\e,\lambda}^\nu(U)
\label{Bogcor}\ee
pointwise.
This follows immediately from \eqref{Btrick} if
$\lambda \ge 1$, and if $\lambda \le 1$ it follows by noting that 
$|\omega| \le e_{\e, 1}^\nu(U) \le \lambda^{-1}e^\nu_{\e,\lambda}(U)$.

We immediately deduce from \eqref{Bogcor} that $\omega(U)$ is integrable
for any finite-action $U = (\phi, A)$, and it is known  (and follows rather easily
from Lemma \ref{lemma2} below) that $\int_{\R^2}\omega(U) \in \pi\Z$.

Let $U^{(m)}$ denote the $\lambda=1$, 
equivariant solution in the weak homotopy class
$H_m$, discussed in Section \ref{S:2deuc}.
It is well-known that $U^{(m)}$ satisfies
\[
(D_{1}  + \sigma iD_{2})\phi = 0, 
\qquad
F_{12} + \sigma \frac 1{2\e}(\abs{\phi}^{2}-1) = 0,
\qquad\qquad \sigma:= \operatorname{sign}(m),
\]
see for example \cite{JaffeTaubes}.
By combining these 
with \eqref{Btrick}
we see that
\be
\calE^1_m = \pi |m| = \int_{\R^2} | \omega(U^{(m)}) | .
\label{A1}\ee
From this we easily deduce the following

\begin{lem}\label{lineargrowth}
If $ \frac {|m|}{|m|+1} \le \lambda \le  \frac {|m|+1}{|m|}$, then 
\be
\mbox{$\calE_{m}^{\lambda}<\calE_{n}^{\lambda}$ whenever $|m| < |n|$. }
\label{Amonotone}\ee
In particular, $\calE_1^\lambda < \calE_n^\lambda$ for all $|n|\ge 2$ if $\frac 12 \le \lambda \le 2$.
\end{lem}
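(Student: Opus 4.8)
The plan is to estimate $\calE^\lambda_n$ from below and above by comparison with the $\lambda=1$ energies, which are exactly known: $\calE^1_n=\pi|n|$ by \eqref{A1}. The key observation is that the energy density $e^\nu_{\e,\lambda}(U)$ depends on $\lambda$ only through the last term $\frac{\lambda}{8\e^2}(|\phi|^2-1)^2$, which is monotone in $\lambda$ and, in fact, linear in $\lambda$ once we isolate the potential. Write $P(U):=\frac1{8\e^2}\int_{\R^2}(|\phi|^2-1)^2$ and $E_0(U):=\int_{\R^2}\big[\frac12(|D_1\phi|^2+|D_2\phi|^2)+\frac{\e^2}4 F_{12}^2\big]$, so that $\int e^\nu_{\e,\lambda}(U)=E_0(U)+\lambda P(U)$. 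For a minimizer $U^\lambda_n$ of the class $H_n$ at parameter $\lambda$, testing against $U^{(n)}$ (the $\lambda=1$ equivariant solution, also in $H_n$) and against the $\lambda$-minimizer in the two energies gives the sandwich estimates needed.

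First I would record the two-sided bound: for any $\lambda>0$ and any $U\in H_n$,
\[
\min\{1,\lambda\}\,\big(E_0(U)+P(U)\big)\ \le\ E_0(U)+\lambda P(U)\ \le\ \max\{1,\lambda\}\,\big(E_0(U)+P(U)\big),
\]
and minimizing over $H_n$ yields $\min\{1,\lambda\}\,\calE^1_n\le \calE^\lambda_n\le\max\{1,\lambda\}\,\calE^1_n$, i.e.
\[
\pi|n|\min\{1,\lambda\}\ \le\ \calE^\lambda_n\ \le\ \pi|n|\max\{1,\lambda\}.
\]
Now fix $m$ with $\frac{|m|}{|m|+1}\le\lambda\le\frac{|m|+1}{|m|}$ and take $|n|>|m|$, so $|n|\ge|m|+1$. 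If $\lambda\ge 1$ then $\lambda\le\frac{|m|+1}{|m|}\le\frac{|n|}{|m|}$... wait, more carefully: I want $\calE^\lambda_m<\calE^\lambda_n$. Using the upper bound on $\calE^\lambda_m$ and the lower bound on $\calE^\lambda_n$:
\[
\calE^\lambda_m\ \le\ \pi|m|\max\{1,\lambda\},\qquad \calE^\lambda_n\ \ge\ \pi|n|\min\{1,\lambda\}\ \ge\ \pi(|m|+1)\min\{1,\lambda\}.
\]
So it suffices to check $|m|\max\{1,\lambda\}<(|m|+1)\min\{1,\lambda\}$. When $\lambda\ge1$ this reads $|m|\lambda<|m|+1$, i.e. $\lambda<\frac{|m|+1}{|m|}$; when $\lambda\le1$ it reads $|m|<(|m|+1)\lambda$, i.e. $\lambda>\frac{|m|}{|m|+1}$. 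These are precisely the hypotheses (with non-strict inequalities), so I must be slightly careful at the endpoints. At an endpoint, say $\lambda=\frac{|m|+1}{|m|}$, the crude bound only gives $\le$; to get strict inequality I would argue that equality in the sandwich bound forces $P(U)=0$ (from $\min\{1,\lambda\}$ vs. the $\lambda$-weight, the gap is $(\lambda-1)P(U)$), which forces $|\phi|\equiv1$, hence $\omega(U)$... actually a cleaner route at the endpoints: use that $\calE^\lambda_n = E_0(U^\lambda_n)+\lambda P(U^\lambda_n)$ and compare with the value at a fixed test configuration to get a strict gap unless $P$ vanishes identically for a minimizer, and a finite-energy configuration with $|\phi|\equiv1$ in $H_n$, $n\neq0$, is impossible (its energy would be infinite or it would have the wrong vorticity). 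This handles $n\ge 2$, $|m|=1$, $\frac12\le\lambda\le2$ as the stated special case.

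The main obstacle is the endpoint strictness: the linear-comparison argument is lossless only on the open parameter interval, so at $\lambda=\frac{|m|}{|m|+1}$ or $\lambda=\frac{|m|+1}{|m|}$ one genuinely needs the extra structural input that no finite-energy minimizer in a nontrivial class can have $|\phi|\equiv1$ (equivalently, $\calE^\lambda_n>0$ strictly and the potential term cannot be discarded), together with the quantization \eqref{quant} to rule out degenerate configurations. Everything else is the elementary sandwich inequality combined with $\calE^1_n=\pi|n|$; the second sentence of the lemma is just the case $|m|=1$.
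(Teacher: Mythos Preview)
Your approach is essentially the paper's: both use the pointwise sandwich $\min\{1,\lambda\}\,e^\nu_{\e,1}\le e^\nu_{\e,\lambda}\le\max\{1,\lambda\}\,e^\nu_{\e,1}$ to deduce $\pi|n|\min\{1,\lambda\}\le\calE^\lambda_n\le\pi|n|\max\{1,\lambda\}$ and combine this with $\calE^1_n=\pi|n|$.

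The one place your argument wobbles is the endpoint strictness. You invoke a minimizer $U^\lambda_n$, but its existence is exactly what Theorem~\ref{T.exist} is for, and the present lemma feeds into the hypotheses of that theorem, so this risks circularity. The paper sidesteps this more simply: for $\lambda>1$ the chain is $\calE^\lambda_m\le\lambda\calE^1_m=\lambda\pi|m|\le\pi|n|=\calE^1_n\le\calE^\lambda_n$, and the \emph{first} inequality is already strict, because it comes from testing with the explicit $\lambda=1$ equivariant minimizer $U^{(m)}$, for which
\[
\lambda\,e^\nu_{\e,1}(U^{(m)})-e^\nu_{\e,\lambda}(U^{(m)})=(\lambda-1)\Big[\tfrac12|D\phi^{(m)}|^2+\tfrac{\e^2}4\big(F^{(m)}_{12}\big)^2\Big]
\]
is strictly positive on a set of positive measure whenever $m\ne0$. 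Hence $\calE^\lambda_m\le\int e^\nu_{\e,\lambda}(U^{(m)})<\lambda\pi|m|$, and no $\lambda$-minimizer is needed. The case $\lambda<1$ is symmetric. So your proof is correct in spirit; just move the strictness to the upper bound on $\calE^\lambda_m$ via this explicit test configuration rather than appealing to properties of a not-yet-constructed $\lambda$-minimizer.
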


\begin{proof}
It is clear from \eqref{A1} that the lemma holds  for $\lambda=1$.
If $1 < \lambda \le  \frac {|m|+1}{|m|}$,
then $e^\nu_{\e,1}(U) \le e^\nu_{\e,\lambda}(U) \le \lambda e^\nu_{\e,1}(U)$ pointwise for every $U$. 
In addition,  $\lambda |m| \le |n|$ if $|n|>|m|$. From these we
deduce that
\[
 \calE_m^{\lambda}
\ \le  \ 
\lambda\, \calE^1_m \ = \lambda |m| \pi \  \le \   |n|\pi
\ =  \ \calE^1_n  \le \calE_n^\lambda \quad\quad \mbox{ if }|m|< |n|,
\]
and it is not hard to check that at least one inequality is strict.
If $\frac {|m|}{|m|+1}  \le \lambda < 1$ and $|m|< |n|$, then
$\lambda e^\nu_{\e,1}(U) \le e^\nu_{\e,\lambda}(U) \le e^\nu_{\e,1}(U)$ and
$ |m| \le \lambda |n|$, and the conclusion follows very much as above.
\end{proof}

\begin{rem} With a little more work one can prove by similar arguments
that \eqref{Amonotone} holds for a slightly larger range
of $\lambda$, but these sorts of simple arguments have no hope of proving the
natural conjecture, which is that it is valid for all $\lambda>0$.

We also remark that it is known from \cite{Riviere02}  that  if $\lambda$ is sufficiently large
then \eqref{Amonotone} is true for
all $m$ and $n$.
\end{rem}

We conclude this section by proving the lemma mentioned above, which shows that
the vorticity is approximately quantized on a set on which the boundary energy is not too large. For this we need

\begin{lem}\label{geq.delta}
There exists constant $C$ such that
if $S\subset \R^2$ is a bounded, connected, and simply connected set, and
$\partial S$ is Lipschitz with $|\partial S| \ge \e$,
and if $\rho$ is a smooth nonnegative function on a neighborhood of $\partial S$, then
\be
\int_{\partial S}\frac 12 \abs{\nabla_{\tau}\rho}^{2}+\frac{\lambda}{8\e^{2}}(1-\rho^{2})^{2}d\mathcal H^{1}\geq \frac {\sqrt\lambda}{C\e}\| 1 - \rho\|_{L^\infty(\partial S)}^{2},
\label{gdc}\ee
where $\nabla_\tau $ denotes the tangential derivative along $\partial S$.
\end{lem}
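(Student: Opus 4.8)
The plan is to reduce the claimed inequality to a one-dimensional Sobolev/Agmon-type estimate along the curve $\partial S$. First I would parametrize $\partial S$ by arclength, giving a Lipschitz map from $\R/|\partial S|\Z$ to $\R^2$; since the integrand on the left is intrinsic to $\partial S$ (involving only $\rho$ and its tangential derivative), the problem becomes: for a function $\rho$ on a circle of length $\ell := |\partial S| \ge \e$, bound $\|1-\rho\|_{L^\infty}^2$ by $\frac{C\e}{\sqrt\lambda}\int (\frac12|\rho'|^2 + \frac{\lambda}{8\e^2}(1-\rho^2)^2)$. Writing $v := 1-\rho$, note $(1-\rho^2)^2 = v^2(2-v)^2$, and the key pointwise fact is that $(2-v)^2 \ge $ const whenever $|v|$ is not close to $2$; more robustly, $(1-\rho^2)^2 \ge c\min\{v^2, 1\}$ — but since we only need an $L^\infty$ bound, it is cleanest to argue by cases on whether $\|v\|_{L^\infty}$ is large or small.

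The main step is a scaling/Agmon argument. Let $y_0 \in \partial S$ be a point where $|1-\rho|$ attains its maximum $M := \|1-\rho\|_{L^\infty}$. If $M \le 1/2$, then $\rho \ge 1/2$ everywhere, so $(1-\rho^2)^2 = (1-\rho)^2(1+\rho)^2 \ge (1-\rho)^2$, and one has the differential inequality controlling how fast $|1-\rho|$ can decay away from $y_0$: on the interval where $|1-\rho|\ge M/2$, say of length $2\delta$, the potential term contributes at least $\frac{\lambda}{8\e^2}\cdot\frac{M^2}{4}\cdot 2\delta$, while if $\delta$ is small the gradient term must carry $\rho$ from near $1-M$ back toward $1$, contributing at least roughly $\frac{M^2}{C\delta}$ (by Cauchy–Schwarz, $\frac{M}{2}\le \int_{y_0}^{y_0+\delta}|\rho'| \le \sqrt\delta \, \|\rho'\|_{L^2}$). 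Optimizing the choice of $\delta$ — balancing $\frac{M^2}{\delta}$ against $\frac{\lambda M^2 \delta}{\e^2}$, which gives $\delta \sim \e/\sqrt\lambda$, legitimate precisely because $\delta \lesssim \ell$ thanks to the hypothesis $\ell \ge \e$ (and, if $\lambda$ is large, one simply takes $\delta$ as large as allowed, i.e. $\delta \sim \e$) — yields $\int_{\partial S}(\cdots) \ge \frac{\sqrt\lambda}{C\e}M^2$, as desired. If instead $M > 1/2$, the bound to prove reads $\int_{\partial S}(\cdots) \gtrsim \frac{\sqrt\lambda}{\e}M^2$, but $M$ could in principle be huge ($\rho$ very large), so here I would instead show the left side is $\ge \frac{\sqrt\lambda}{C\e}$ times $M^2$ directly: since $|1-\rho|$ travels from $>1/2$ down through the value $1/2$ (if $\rho$ is not everywhere $\ge 1/2$, use a sub-interval; if $\rho$ is everywhere far from where $(1-\rho^2)^2$ is small this is even easier), the same Agmon-interval argument applied near the maximizer gives a lower bound growing at least like $M^2/\e$ up to the $\sqrt\lambda$ factors, using $(1-\rho^2)^2 \ge \frac14(\rho^2-1)^2 \gtrsim M^4 \ge M^2$ on the relevant set; a cleaner route is to first bound $\|1-\rho\|_{L^\infty}^2 \le C(1 + \int_{\partial S}(\cdots))$ by a crude argument and then upgrade.

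The main obstacle I anticipate is handling the regime where $\rho$ is large (the case $M>1/2$ above), where the quartic potential grows faster than the quadratic quantity $M^2$ we want to bound, so a naive interpolation overshoots; the fix is that in this regime the left-hand side is automatically enormous, so one only needs a crude bound, but stitching the two regimes together while keeping track of the sharp $\sqrt\lambda/\e$ dependence and correctly using $|\partial S|\ge\e$ to justify that the optimal Agmon scale $\e/\sqrt\lambda$ fits inside $\partial S$ requires care. A secondary technical point is that $\rho$ is only assumed smooth near $\partial S$ and $\partial S$ only Lipschitz, so the arclength parametrization is merely Lipschitz and $\rho$ restricted to it is Lipschitz; this is harmless since all the integrals and the $L^\infty$ norm make sense and the one-dimensional calculus (fundamental theorem of calculus, Cauchy–Schwarz) is valid for Lipschitz functions, but it should be remarked upon rather than glossed over.
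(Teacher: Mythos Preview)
The paper does not actually prove this lemma: it cites \cite{Jerrard99}, Lemma 2.3, for the case of a ball and observes that the argument carries over verbatim since everything is intrinsic to $\partial S$. Your arclength-parametrization plus Agmon/interval argument is exactly the standard route and is correct in outline.

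That said, you are making the large-$M$ case much harder than it needs to be. The single pointwise inequality
\[
(1-\rho^2)^2 \;=\; (1-\rho)^2(1+\rho)^2 \;\ge\; (1-\rho)^2,
\]
valid for all $\rho \ge 0$, lets your interval argument run uniformly with no case split on the size of $M = \|1-\rho\|_{L^\infty}$: near the maximizer, either $|1-\rho|\ge M/2$ on an interval of length $\delta$ (potential term $\gtrsim \lambda M^2\delta/\e^2$), or $|1-\rho|$ drops by $M/2$ within length $\delta$ (gradient term $\gtrsim M^2/\delta$), and optimizing $\delta\sim \e/\sqrt\lambda$ finishes. Your sketched handling of $M>1/2$ contains slips (e.g.\ $(1-\rho^2)^2 \ge \tfrac14(\rho^2-1)^2$ is an equality, and $M^4\ge M^2$ fails for $1/2<M<1$), but these evaporate once you use the displayed inequality above.

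One genuine small point: your parenthetical about $\lambda$ is backwards. For \emph{large} $\lambda$ the optimal scale $\delta\sim\e/\sqrt\lambda$ is small and fits comfortably inside $|\partial S|\ge \e$; the delicate regime is \emph{small} $\lambda$, where $\delta$ may exceed $|\partial S|$ and one is forced to take $\delta\sim\e$, yielding only $\gtrsim \lambda M^2/\e$ rather than $\sqrt\lambda M^2/\e$. This is not a flaw in your argument but in the statement as written: testing with $\rho$ constant on $|\partial S|=\e$ shows the constant $C$ must blow up like $\lambda^{-1/2}$ as $\lambda\to 0$, so $C$ necessarily depends on $\lambda$ (consistent with how the lemma is used downstream in Lemma~\ref{lemma2}, where $C$ is explicitly allowed to depend on $\lambda$).
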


This is proved in  \cite[lemma 2.3]{Jerrard99} when $S$ is a ball, and 
exactly the same argument applies here, since the proof only involves integrating along $\partial S$, which is isometric to a circle.

\begin{lem}\label{lemma2} 
Assume that $S\subset \R^2$ is connected and simply connected with Lipschitz boundary.
Let $\lambda > 0$.
There exists a constant $C$, depending on $\lambda$, such that if $|\partial S|\geq \e$ then
\[
\abs{\int_{S}\omega-\pi n}\leq C\e \int_{\partial S}[\frac{\abs{\nabla_{A}\phi}^{2}}{2}+\frac{\lambda(\abs{\phi}^{2}-1)^{2}}{8\e^{2}}]d\mathcal H^{1}
\]
for some $n\in \mathbb Z$. Moreover, if $\int_{\partial S} e^\nu_{\e,\lambda}(U) \le \frac 1{C\e}$, then in fact
$n = \deg(\frac{\phi}{|\phi|};\partial S)$.
\end{lem}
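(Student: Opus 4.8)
The plan is to reduce the quantity $\int_S \omega$ to a boundary integral via Stokes' theorem, and then to recognize that boundary integral as (a perturbation of) a winding number. Recall that $\omega(U) = \frac12 \nabla\times(j(U)+A)$, so $\omega$ is an exact $2$-form: writing $\beta := \frac12(j(U)+A)$ as a $1$-form on $\R^2$, we have $\omega(U)\,dy^1\wedge dy^2 = d\beta$. Since $S$ is connected, simply connected with Lipschitz boundary, Stokes' theorem gives
\be
\int_S \omega \ = \ \int_{\partial S} \beta \ = \ \frac12 \int_{\partial S} \big( \ip{i\phi, D_1\phi}\,dy^1 + \ip{i\phi,D_2\phi}\,dy^2 \big) \ + \ \frac12 \int_{\partial S} A.
\ee
The first step is thus to control $\int_{\partial S} A$: on a simply connected neighborhood one can choose a gauge (or simply use that the statement is gauge-invariant, since $\omega$, the energy density on $\partial S$, and $\deg(\phi/|\phi|;\partial S)$ are all gauge-invariant) in which $A$ is, say, the differential of a function, or more robustly one estimates $\int_{\partial S} A = \int_S F_{12}$ by Stokes again and absorbs it — but the cleaner route is to keep $j(U)+A$ together and observe that on the part of $\partial S$ where $|\phi|$ is close to $1$, writing $\phi = |\phi| e^{i\theta}$ locally, one has $j(U)+A = |\phi|^2(d\theta - A) + A = |\phi|^2 d\theta + (1-|\phi|^2)A$, so that $\frac12\int_{\partial S}(j(U)+A)$ differs from $\frac12\int_{\partial S}|\phi|^2\,d\theta$ by an error term linear in $(1-|\phi|^2)$ on $\partial S$.

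The second step is the core estimate. On $\partial S$ we invoke Lemma \ref{geq.delta}: the hypothesis $|\partial S|\ge\e$ together with the boundary-energy bound controls $\|1-|\phi|\|_{L^\infty(\partial S)}^2$ (after noting $|\nabla_\tau|\phi||\le|\nabla_A\phi|$ pointwise). Hence $|\phi|$ is bounded away from $0$ on $\partial S$ \emph{provided} the boundary energy is small — this is where the hypothesis $\int_{\partial S}e^\nu_{\e,\lambda}(U)\le\frac1{C\e}$ enters for the ``moreover'' clause: it forces $\|1-|\phi|\|_{L^\infty(\partial S)}\le\frac12$, so $\phi/|\phi|:\partial S\to S^1$ is well-defined and $\frac1{2\pi}\int_{\partial S}d\theta = \deg(\phi/|\phi|;\partial S) =: n\in\Z$. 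Then
\be
\Big| \int_S\omega - \pi n \Big| \ \le \ \Big|\frac12\int_{\partial S}(|\phi|^2-1)\,d\theta\Big| \ + \ \Big|\frac12\int_{\partial S}(1-|\phi|^2)A\Big| \ + \ (\text{gauge terms}),
\ee
and each term is estimated by $\|1-|\phi|^2\|_{L^\infty(\partial S)}$ times $\int_{\partial S}|\nabla_A\phi|$ (for the $d\theta$ term, since $|\nabla_\tau\theta|\,|\phi| \lesssim |\nabla_A\phi|$ away from the zero set) plus lower-order pieces; using Cauchy--Schwarz on $\partial S$ and $|\partial S|\ge\e$, together with Lemma \ref{geq.delta}, one bounds everything by $C\e\int_{\partial S}\big[\frac12|\nabla_A\phi|^2 + \frac{\lambda}{8\e^2}(|\phi|^2-1)^2\big]$. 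Without the smallness hypothesis one still gets the estimate with \emph{some} $n\in\Z$: if $|\phi|$ vanishes somewhere on $\partial S$ the right-hand side is large (by Lemma \ref{geq.delta}, $\int_{\partial S}e^\nu \gtrsim \frac{\sqrt\lambda}{\e}$), so the inequality holds trivially for any $n$, e.g. $n=0$; and if $|\phi|>0$ on $\partial S$ the degree argument applies verbatim.

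The main obstacle I anticipate is the careful handling of the $d\theta$ / phase term near points where $|\phi|$ is small but not zero: $\theta$ is only locally defined, and one must make sure the "error term linear in $1-|\phi|^2$" is genuinely controlled by $|\nabla_A\phi|$ on $\partial S$ with constants uniform in $\e$. The clean way around this is to never introduce $\theta$ globally: instead, write $j(U)+A = \langle i\phi, \nabla_A\phi\rangle$-type expressions and estimate $\int_{\partial S}(j(U)+A) - \int_{\partial S}\frac{1}{|\phi|^2}\langle i\phi,\nabla_A\phi\rangle$ directly (the latter integrand being $\nabla_\tau(\arg\phi)$ in disguise, whose integral over $\partial S$ is $2\pi n$ by the argument principle once $|\phi|$ is bounded below), with the difference manifestly $O(\|1-|\phi|^2\|_{L^\infty})\cdot\int_{\partial S}|\nabla_A\phi|$. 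A secondary technical point is that $\partial S$ need not be connected a priori, but simple connectedness of $S$ forces $\partial S$ to be a single Lipschitz loop (up to measure zero), so the degree is well-defined; this should be cited or disposed of quickly.
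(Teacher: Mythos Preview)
Your overall architecture matches the paper's proof: Stokes to the boundary, a trivial case when $|\phi|$ dips below $\tfrac12$ on $\partial S$ (so the right-hand side is $\gtrsim 1$ by Lemma~\ref{geq.delta} and any $n$ works), and a degree computation when $|\phi|\ge\tfrac12$ on $\partial S$. The gap is in how you separate the winding number from the error in the second case.

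Both decompositions you propose leave you with a term you cannot control. Writing $j+A=|\phi|^2\,d\theta+(1-|\phi|^2)A$ produces a bare $A$ on $\partial S$, which is not gauge-invariant and has no a priori bound. Your ``clean way'' does not fix this: $\frac{1}{|\phi|^2}\ip{i\phi,\nabla_A\phi}$ equals $\nabla\theta-A$, \emph{not} $\nabla\theta$, so its integral over $\partial S$ is $2\pi n-\int_{\partial S}A\cdot\tau$, not $2\pi n$. (If you want a gauge-invariant expression that integrates to $2\pi n$, take $\frac{1}{|\phi|^2}\ip{i\phi,\nabla\phi}$ with the \emph{ungauged} gradient.) The decomposition that works is the one the paper uses:
\[
j+A \;=\; \nabla\theta \;+\;(\rho^2-1)(\nabla\theta-A),
\qquad \rho:=|\phi|,
\]
so that the first piece integrates to $2\pi n$ and the error is manifestly gauge-invariant, with $|\nabla\theta-A|=|j|/\rho^2\le 2|\nabla_A\phi|$ once $\rho\ge\tfrac12$.

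A secondary point: your plan to bound the error by $\|1-|\phi|^2\|_{L^\infty(\partial S)}\int_{\partial S}|\nabla_A\phi|$ and then use Cauchy--Schwarz and Lemma~\ref{geq.delta} picks up an unwanted factor of $|\partial S|^{1/2}$, so the constant would depend on the domain. The paper instead applies Young's inequality \emph{pointwise} on $\partial S$:
\[
\big|\rho^2-1\big|\,|\nabla_A\phi|\;\le\;\frac{2\e}{\sqrt\lambda}\Big(\frac{|\nabla_A\phi|^2}{2}+\frac{\lambda}{8\e^2}(\rho^2-1)^2\Big),
\]
which gives the clean $C\e$ factor with $C$ independent of $|\partial S|$. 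With these two fixes your argument becomes the paper's proof.
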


\begin{proof}
{\bf Case 1}: If $\inf_{\partial S}|\phi| < \frac 12$, then since $|\nabla_\tau |\phi|| \le |\nabla_A\phi|$, we
can apply Lemma \ref{geq.delta} to $\rho = |\phi|$ to find that 
\[ 
\e \int_{\partial S}[\frac{\abs{\nabla_{A}\phi}^{2}}{2}+\frac{\lambda(\abs{\phi}^{2}-1)^{2}}{8\e^{2}}]d\mathcal H^{1} \ge  \frac {\sqrt \lambda}{4C}.
\] 
Since 
$\min_{n\in \Z} \abs{a-\pi n}\leq \frac\pi 2$ for every $a\in \R$, this implies \eqref{gdc}.

{\bf Case 2}: We assume that
\be
\abs{\phi} \ \ge \frac 12\ \ \mbox{on}\ \ \partial S.
\label{gtrhalf}\ee
Note in particular that this occurs if $\int_{\partial S} e^\nu_{\e,\lambda}(U) \le \frac 1{C\e}$, due to Lemma \ref{geq.delta}. Because of \eqref{gtrhalf}
we can then write  $\phi = \rho e^{i\eta}$ on $\partial S$,  and in this notation,
\[
j(U)=\rho^{2}(\nabla\eta-A)\ \ \mbox{on}\ \ \partial S,
\]
so that $j+A = \nabla\eta + (\rho^2-1)(\nabla\eta -A)$ on $\partial S$.
Hence
\begin{align*}
\int_{S}\omega&=\frac 12\int_{S}\nabla \times (j+A)\\
			   &=\frac 12\int_{\partial S}(j+A)\cdot \tau\\
			   &=\frac 12\int_{\partial S}\nabla \eta\cdot \tau+\frac 12\int_{\partial S}(\rho^{2}-1)(\nabla\eta-A)\cdot \tau.
\end{align*}
And the conclusion now follows by noting that
\[
\int_{\partial S}\nabla \eta\cdot \tau  \ = \ 2\pi \deg(\frac \phi{|\phi|}; \partial S) \in 2\pi \mathbb Z,
\]
and, recalling \eqref{gtrhalf},
\begin{align*}
\abs{\frac 12\int_{\partial S}(\rho^{2}-1)(\nabla \eta-A)\cdot \tau}&\leq \frac 12\int_{\partial S}\abs{\frac{\rho^{2}-1}{\rho}}\abs{\rho(\nabla\eta-A)}\\
&\leq
\int_{\partial S}\abs{\abs{\phi}^{2}-1}\abs{\nabla_{A}\phi}\\
&\leq 
\frac{2\e}{\lambda^{1/2}}\int_{\partial S}\frac { \abs{\nabla_{A}\phi}^{2}}{2}+\frac {\lambda}{8\e^{2}}(\abs{\phi}^{2}-1)^{2}.
 \end{align*}
\end{proof}

\section{2d Vorticity confinement functional}\label{S:Dnu}

Let $m$ be a positive integer.
For a configuration $U = (\phi, A)$ on $B_\nu(R)\subset \R^2$, 
we define\footnote{In this section, we write $\yn$ to denote a 
point in $(y^1, y^2)\in \R^2$. This variable plays the same role as
the $\yn$ 
in Sections \ref{ahm} and \ref{proof_mainthm}, where however $\yn = (y^2, y^3)$.}
\be\label{defect}
\D^{\nu}_m(U;R) := \pi m - \int_{B_\nu(R)}f(|\yn| )\,\omega(U)(\yn) \,d\yn
\ee
where $f: [0,R]\rightarrow [0,1 ]$ is a fixed smooth function satisfying
\be
f(0)=1,
\qquad
f(R)=0, 
\qquad
0 \ge f'(r) \ge -C r^2 \mbox{ for all $r$}
\label{f.def}\ee
where of course $C$ depends on $R$. 
We expect $\D^\nu_m(U;R)$ to be small (or negative) if (at least) $m$ quanta of vorticity
are concentrated near the center of the ball $B_\nu(R)$. 

The main results of this section are the following two propositions, both of which 
relate $e_{\e,\lambda}^\nu$ and $D^\nu_m$. 
They together yield stability properties that are used in a crucial way in our proof of Theorem 
\ref{mainthm}.

Our first proposition will allow us to 
control changes in $\D^\nu_m$. In its statement and proof, we write
points in $(0,T)\times B_\nu(R) $ in the form  $(y^0, \yn)$.

\begin{prop}\label{prop2}
Let $U = (\phi, A)$ be a configuration on
$(0,T)\times B_\nu(R) $, for some  $T>0$ and $B_\nu(R)\subset \R^2$ ,
so that  $A$ has 
components $A_i \in H^1(   (0,T)\times B_\nu(R))$ for $i= 0,1,2$.
Then for every $\lambda>0$, integer $m$, there exists a constant
$C = C(\lambda, m,R)$ such that 
\begin{align}
&\abs{\D_m^{\nu}(U(t);R)-\D_m^{\nu}(U(0);R)}\nonumber \\
&\qquad\qquad
\leq C \int_{(0,t)\times B_{\nu}(R)} \left(\abs{D_{0}\phi}^{2}
+\e^{2}(F_{01}^{2} + F_{02}^2) +\abs{\yn}^{2}e^\nu_{\e,\lambda}(U)\right) \ dy^{\nu}d y^0
\label{changeD}\end{align}
for every $t\in (0,T)$. 
\end{prop}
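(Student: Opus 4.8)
The plan is to differentiate $\D^\nu_m(U(t);R)$ in $t$ and control the result. By definition,
\[
\D^\nu_m(U(t);R) - \D^\nu_m(U(0);R) = -\int_{(0,t)\times B_\nu(R)} f(|\yn|)\, \partial_0 \omega(U)(y^0,\yn)\, dy^\nu\, dy^0,
\]
so the first task is to find a usable expression for $\partial_0 \omega(U)$. Recall $\omega(U) = \tfrac12[\partial_1 j_2 - \partial_2 j_1 + F_{12}]$ with $j_k = \ip{i\phi, D_k\phi}$. The key observation is that $\partial_0 \omega$ has the structure of a spatial divergence (in the $\yn$ variables), because $\omega$ is essentially a curl: one computes
\[
\partial_0 \big( j_k(U) + A_k \big) = \partial_k\big(j_0(U) + A_0\big) + 2\,\ip{i D_k \phi, D_0 \phi} - 2\, \e^{-2}\,(\text{terms from }F)
\]
— more precisely, $\partial_0(j_k + A_k) - \partial_k(j_0 + A_0) = 2\ip{iD_k\phi, D_0\phi} + (F_{0k}\text{-type terms})$, using $\partial_0 A_k - \partial_k A_0 = F_{0k}$ and the Leibniz-type identity for $j$. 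Taking the curl in $\yn$ then gives $2\,\partial_0\omega = \partial_1\partial_0(j_2+A_2) - \partial_2\partial_0(j_1+A_1)$, and substituting the above turns this into $\partial_k$ of quantities that are pointwise bounded by $|D\phi||D_0\phi| + \e|F||\,\e F_{0k}|$, i.e. controlled by $e^\nu_{\e,\lambda}(U)^{1/2}\big(|D_0\phi|^2 + \e^2 F_{01}^2 + \e^2 F_{02}^2\big)^{1/2}$ up to $\e$-independent constants, plus genuinely lower-order terms.

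Next I would integrate by parts in $\yn$ to move the $\yn$-derivative off these divergence terms and onto $f(|\yn|)$. Since $f(R) = 0$, the boundary term on $\partial B_\nu(R)$ vanishes, and what remains is $\int_{(0,t)\times B_\nu(R)} \nabla f(|\yn|) \cdot (\text{vector field})\, dy^\nu dy^0$ together with possibly one more integration by parts on the term coming from $\partial_k(j_0 + A_0)$ (for that one, after two curls/divergences one should be careful; the cleanest route is to note $\partial_1\partial_0(\partial_2 A_0) - \partial_2\partial_0(\partial_1 A_0) = 0$, so the pure-$A_0$ piece drops out, and only the $j_0$ piece and the $\ip{iD_k\phi,D_0\phi}$, $F_{0k}$ pieces survive). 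The crucial structural input is the hypothesis $|f'(r)| \le C r^2$, so that $|\nabla f(|\yn|)| \le C|\yn|^2$. This is exactly what converts any term of the form $\nabla f \cdot V$ with $|V| \lesssim e^\nu_{\e,\lambda}(U)$ into the weighted energy term $|\yn|^2 e^\nu_{\e,\lambda}(U)$ appearing on the right of \eqref{changeD}. For the terms where $\nabla f$ pairs against $|D_0\phi||D\phi|$ or $\e|F||\,\e F_{0k}|$ I would instead apply Cauchy–Schwarz as $|\yn|^2 \cdot \tfrac12 e^\nu_{\e,\lambda}(U) + \tfrac12(|D_0\phi|^2 + \e^2 F_{01}^2 + \e^2 F_{02}^2)$ (absorbing $|\yn|^2 \le R^2$ or keeping the weight as convenient), landing precisely in the claimed bound. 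The $j_0 = \ip{i\phi,D_0\phi}$ term needs $|j_0| \le |\phi||D_0\phi| \le (1 + |\phi|^2)|D_0\phi| \lesssim |D_0\phi| + |D_0\phi|(|\phi|^2-1)$, the second piece being controlled by Young's inequality against $(|\phi|^2-1)^2/\e^2$ in $e^\nu_{\e,\lambda}$ — here I would be slightly careful about whether a factor $|\phi|$ near infinity causes trouble, but on a bounded domain with finite energy this is routine, and in any case the weight $|\yn|^2$ is bounded by $R^2$.

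The main obstacle I anticipate is purely bookkeeping: correctly deriving the divergence-form identity for $\partial_0\omega$ (the discrete analogue of "$d\omega$ relates to the Yang–Mills–Higgs current") and tracking which terms are genuine divergences versus which are already pointwise-bounded by the energy, so that every surviving term after integration by parts either (i) pairs $\nabla f$ (hence a factor $|\yn|^2$) against something $\lesssim e^\nu_{\e,\lambda}$, or (ii) is directly $\lesssim |D_0\phi|^2 + \e^2(F_{01}^2+F_{02}^2)$. A secondary technical point is justifying the integration by parts and the use of the equations/identities at the stated $H^1$ regularity of the $A_i$ (and the implicit regularity of $\phi$); this is handled by a standard density/approximation argument, which I would mention but not belabor. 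No use of the Euler–Lagrange equations is needed — the estimate is an a priori identity for arbitrary configurations, which is why the constant $C$ depends only on $\lambda, m, R$ and not on $\e$.
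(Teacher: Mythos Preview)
Your approach is correct and leads to the same estimate, but the paper organizes the argument differently, and the comparison is worth noting.

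The paper does not differentiate $\omega$ in time. Instead it proves an intermediate lemma: for each fixed radius $r\in(0,R)$,
\[
\Big|\int_{B_\nu(r)}\omega(U(t))-\int_{B_\nu(r)}\omega(U(0))\Big|
\ \le\ \max\{1,\lambda^{-1}\}\int_{(0,t)\times\partial B_\nu(r)} e^{3d}_{\e,\lambda}(U)\,d\mathcal H^2,
\]
obtained by viewing the three-dimensional vorticity as the exact $2$-form $d(j+A)$ on $(0,t)\times B_\nu(r)$, applying Stokes on the solid cylinder, and invoking the pointwise Bogomol'nyi bound \eqref{Bogcor} on the lateral boundary. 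The weight $f$ is then reintroduced by the layer-cake identity $\int f\omega=\int_0^R g(r)\big(\int_{B_\nu(r)}\omega\big)dr$ with $g=-f'$, and the hypothesis $0\le g(r)\le Cr^2$ supplies the factor $|\yn|^2$. What your direct computation of $\partial_0\omega$ as a spatial curl, followed by one integration by parts against $f$, does is exactly the differential version of this Stokes-plus-coarea argument; the two are equivalent, and each step in one has a counterpart in the other.

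One clarification: your concern about the $j_0$ term is unnecessary. Once you write $\partial_0(j_k+A_k)=\partial_k(j_0+A_0)+2\ip{iD_0\phi,D_k\phi}+(1-|\phi|^2)F_{0k}$ and take the spatial curl, the \emph{entire} gradient piece $\partial_k(j_0+A_0)$ vanishes (not just the $A_0$ part), since $\partial_1\partial_2-\partial_2\partial_1=0$. So you never need to bound $j_0$ itself, and no bare factor of $|\phi|$ appears; the only surviving integrand after integrating by parts is $\nabla f$ paired against $2\ip{iD_0\phi,D_k\phi}+(1-|\phi|^2)F_{0k}$, which is already in the form you want. This removes the ``main obstacle'' you flagged.
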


We will in fact prove something stronger than \eqref{changeD}, but here we have
recorded only the conclusion that is needed for the proof of Theorem \ref{mainthm}.

The other main result of the section shows that control over $\D^\nu_m$ implies
good lower energy bounds.

\begin{prop}Suppose that 
$\lambda>0$ and $m\in \mathbb N$ satisfy
\be
\calE^\lambda_m \le \calE^\lambda_n\quad\quad\mbox{ whenever } n\ge m.
\label{monotone}\ee
Then for every $R>0$, there exist constants $\kappa_1$ and $C$, depending on
$R,\lambda, m$,
such that  if $U = (\phi, A)$ is a finite-energy
configuration satisfying
\[
\D^{\nu}_m(U;R)<\kappa_{1},
\]
then
\be
\int_{B_\nu(R)}e^{\nu}_{\e,\lambda}(U) \ \geq \  \calE^\lambda_m- C\e^2
\label{p1.conc}\ee
for all $\e\in (0,1]$. 
\label{prop1}\end{prop}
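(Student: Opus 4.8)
\textbf{Proof plan for Proposition \ref{prop1}.}

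The plan is to argue by a rescaling-and-compactness/contradiction argument that localizes the energy. First I would reduce to the case $\e=1$: by the scaling \eqref{scale_e}, replacing $U$ by $U^{1/\e}(y) := (\phi(\e y), \e A(\e y))$ turns $\int_{B_\nu(R)} e^\nu_{\e,\lambda}(U)$ into $\int_{B_\nu(R/\e)} e^\nu_{1,\lambda}(U^{1/\e})$, and $\D^\nu_m(U;R)$ becomes (modulo harmless terms) $\pi m - \int_{B_\nu(R/\e)} f(\e|\yn|)\,\omega(U^{1/\e})$. So it suffices to show: if $V$ is a finite-energy configuration on $\R^2$ with $\pi m - \int f_\e \,\omega(V)$ small (where $f_\e(r) = f(\e r)$ is a cutoff equal to $1$ on $B_\nu(1/\e)$ roughly), then $\int_{B_\nu(R/\e)} e^\nu_{1,\lambda}(V) \ge \calE^\lambda_m - C\e^2$. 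The key structural input is the quantization and approximate quantization of vorticity from Lemma \ref{lemma2}: on a circle $\partial B_\nu(r)$ where the restricted boundary energy $\int_{\partial B_\nu(r)} e^\nu_{1,\lambda}(V)$ is controlled, $\int_{B_\nu(r)} \omega(V)$ is within $O(1)\cdot(\text{boundary energy})$ of $\pi n_r$ for an integer $n_r = \deg(\phi/|\phi|;\partial B_\nu(r))$, which is \emph{constant} in $r$ on any interval of radii where the boundary energy stays below the Lemma \ref{lemma2} threshold.

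The main steps, then: (1) From finite total energy and a mean-value/Fubini argument, choose a "good radius" $r_* \in (R/\e, 2R/\e)$, say, with $\int_{\partial B_\nu(r_*)} e^\nu_{1,\lambda}(V) \le C\e \int_{\R^2} e^\nu_{1,\lambda}(V) \le C\e$ (here I use that the total energy is a priori bounded — if it weren't, \eqref{p1.conc} is trivial). (2) Apply Lemma \ref{lemma2} on $B_\nu(r_*)$: since $r_* \ge R/\e \gg \e$ we may assume $|\partial B_\nu(r_*)| \ge \e$, so $|\int_{B_\nu(r_*)}\omega(V) - \pi n| \le C\e$ for $n = \deg(\phi/|\phi|;\partial B_\nu(r_*))$. (3) Relate $\int_{B_\nu(r_*)}\omega(V)$ to $\int f_\e\,\omega(V)$: since $f_\e \equiv 1$ on a large inner ball and $0 \le f_\e \le 1$ with $|f_\e'| \le C\e^2\cdot$(the scaled constant), the difference $|\int f_\e\,\omega(V) - \int_{B_\nu(r_*)}\omega(V)|$ is controlled by the energy in the annulus where $f_\e$ transitions, times the $f$-derivative bound — the $0 \ge f'(r) \ge -Cr^2$ condition in \eqref{f.def} is precisely engineered so that this error, after rescaling, is $O(\e^2 \int e^\nu)$. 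Combining (2) and (3) with the hypothesis $\D^\nu_m(U;R) < \kappa_1$ gives $\pi n \ge \pi m - C\e - \kappa_1$; choosing $\kappa_1$ small (depending on $R,\lambda,m$) forces $n \ge m$, i.e. the degree on the good circle is at least $m$. (4) Finally, the restriction $V|_{B_\nu(r_*)}$ is a configuration of degree $n \ge m$ around its boundary; extending it to a finite-energy configuration on all of $\R^2$ in class $H_n$ by a standard capping construction on $\R^2 \setminus B_\nu(r_*)$ that costs only $O(\e)$ extra energy (using that $|\phi| \ge \frac12$ and the boundary energy is $\le C\e$ there, so we can interpolate $\phi$ to $e^{in\theta}$ and $A$ to the flat connection cheaply), we get $\int_{B_\nu(r_*)} e^\nu_{1,\lambda}(V) \ge \calE^\lambda_n - C\e \ge \calE^\lambda_m - C\e$ by the monotonicity hypothesis \eqref{monotone}. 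Undoing the rescaling converts the $O(\e)$ error back into the claimed $O(\e^2)$.

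The main obstacle I expect is step (4), the capping/extension: one needs to glue the given configuration on $B_\nu(r_*)$ to an explicit finite-energy model outside, producing a genuine element of $H_n$ whose energy exceeds that of $V$ on the ball by only $O(\e)$, and to do this one must carefully use the smallness of the boundary energy $\int_{\partial B_\nu(r_*)} e^\nu \le C\e$ together with $|\phi|\ge \frac12$ there to build a cheap interpolation — both for the modulus of $\phi$ (from $\rho$ to $1$) and, crucially in the gauged setting, for the phase and the connection simultaneously, exploiting gauge invariance to subtract off $\nabla\eta$. A secondary technical point is the reduction to bounded total energy at the start, and making sure all constants $C$ and the threshold $\kappa_1$ depend only on $R, \lambda, m$ and not on $\e$; tracking the $f'$-bound through the rescaling (step 3) is the place where the precise form of \eqref{f.def} matters and must be handled with care.
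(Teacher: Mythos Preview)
There are two genuine gaps.

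\textbf{The rescaling claim in step (7) is false.} Total energy is scale-invariant: by \eqref{scale_e}, $\int_{B_\nu(R)} e^\nu_{\e,\lambda}(U) = \int_{B_\nu(R/\e)} e^\nu_{1,\lambda}(V)$ exactly. So if your argument yields $\int_{B_\nu(R/\e)} e^\nu_{1,\lambda}(V) \ge \calE^\lambda_m - C\e$, then after undoing the rescaling you still have only $\calE^\lambda_m - C\e$, not $\calE^\lambda_m - C\e^2$. The paper obtains the sharper $O(\e^2)$ error through a \emph{bootstrap} hidden in Lemma~\ref{ext}: a first capping across a radius with boundary energy $\le C_1$ gives $\int_{B_\nu(R)} e^\nu \ge \calE^\lambda_n - C\e$; if this does not already yield \eqref{p1.conc}, the annulus $B_\nu(R)\setminus B_\nu(s)$ carries only $O(\e)$ energy, so by mean value one finds a second radius $\sigma$ with $\int_{\partial B_\nu(\sigma)} e^\nu \le C\e$, and capping there costs $C\e\cdot C\e = C\e^2$. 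Your single capping at a radius with boundary energy $O(\e)$ is the first step of this, but the improvement to $O(\e^2)$ is missing.

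\textbf{Step (3) does not work with a single radius.} The cutoff $f$ in \eqref{f.def} is not assumed to equal $1$ on any subinterval; the condition $|f'(r)|\le Cr^2$ only gives $1-f(r)\le Cr^3$, so after rescaling $f_\e(r)=f(\e r)$ is close to $1$ only for $r=o(\e^{-1})$, while any $r_*$ with usefully small boundary energy must be of order $\e^{-1}$. Hence $f_\e$ is nowhere close to the indicator of $B_\nu(r_*)$, and the difference $\int f_\e\,\omega - \int_{B_\nu(r_*)}\omega$ cannot be controlled as you claim. (The $r^2$ bound on $f'$ is used in the proof of Proposition~\ref{prop2}, not here.) The paper instead uses the layer-cake identity
\[
\int_{B_\nu(R)} f(|\yn|)\,\omega \;=\; \int_0^R g(s)\Big(\int_{B_\nu(s)}\omega\Big)\,ds,\qquad g=-f',
\]
which brings in \emph{all} radii at once. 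One then splits $(0,R)$ into ``good'' radii (where $\int_{\partial B_\nu(s)} e^\nu \le C_1$) and ``bad'' ones. If some good $s$ has $n(s)\ge m$, Lemma~\ref{ext} plus \eqref{monotone} finish the proof. Otherwise every good $s$ satisfies $\int_{B_\nu(s)}\omega \le \pi(m-1)+C\e$, the measure of bad radii is bounded via Chebyshev by the total energy, and inserting this into the layer-cake identity together with $\D^\nu_m<\kappa_1$ forces $\sup_{\text{bad }s}\int_{B_\nu(s)}\omega$ to be large; since $|\omega|\le C e^\nu_{\e,\lambda}$, the energy lower bound follows directly in that case too.

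Your capping construction in step (4) is exactly the content of Step~1 of Lemma~\ref{ext}, so that ingredient is correct; what is missing is the bootstrap to $O(\e^2)$ and the replacement of the single-radius comparison by the good/bad dichotomy over all radii.
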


We remark that, although one could extract from our arguments estimates of how 
various constants depend on
$\lambda$, we have not made any effort to optimize this dependence, and indeed we 
appeal several times to \eqref{Bogcor}, 
which is far from sharp when $0<\lambda\ll 1$ or $\lambda\gg 1$.


\subsection{Proof of Proposition \ref{prop2}}

We will use the following lemma
\begin{lem}\label{lemma3}
Assume the hypotheses of Proposition \ref{prop2}. Then for every $r\in (0,R)$ and $t\in (0,T)$
\be
\abs{\int_{B_\nu(r)}\omega(U(t)) -\int_{B_\nu(r)}\omega(U(0))} 
\ \le  
\ \max\{1,\lambda^{-1}\}   \int_{(0,t)\times \partial B_\nu(r)}e^{3d}_{\e,\lambda}(U)  \ d\mathcal H^2
\label{L3.c}\ee
where $\mathcal H^2$ is $2$-dimensional Hausdorff measure and  $e^{3d}_{\e,\lambda}(U)$ denotes the $3$-dimensional energy
density 
\[
e^{3d}_{\e,\lambda}(U) :=  \frac 12 \sum_{a=0}^2  |D_a \phi|^2 +  \frac{\e^2}2 \sum_{0\le a<b \le 2}F_{ab}^2
+ \frac \lambda{8\e^2}(|\phi|^2-1)^2.
\]
\end{lem}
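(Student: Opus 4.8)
The plan is to prove Lemma \ref{lemma3} by a direct space-time integration argument exploiting the structure of the vorticity $\omega(U)$. First I would recall from \eqref{omega.def} that $2\omega(U) = \partial_1 j_2 - \partial_2 j_1 + F_{12}$, and observe that this is precisely (twice) the $dy^1\wedge dy^2$ component of the spacetime $2$-form $\Omega := dJ$, where $J$ is the spacetime $1$-form with components $J_a = \ip{i\phi,D_a\phi} + A_a$ for $a = 0,1,2$. The key point is that $\Omega$ is closed (being exact), so the flux of $\Omega$ through the boundary of the solid spacetime cylinder $(0,t)\times B_\nu(r)$ vanishes. The boundary consists of the top disk $\{t\}\times B_\nu(r)$, the bottom disk $\{0\}\times B_\nu(r)$, and the lateral surface $(0,t)\times \partial B_\nu(r)$.

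Carrying this out: I would apply Stokes' theorem to $J$ on the solid cylinder, or equivalently integrate the identity $\partial_0(\partial_1 j_2 - \partial_2 j_1 + F_{12}) = \partial_1(\text{something}) - \partial_2(\text{something})$ coming from $d\Omega = 0$. Concretely, $d\Omega = 0$ gives $\partial_0 \Omega_{12} = \partial_1\Omega_{02} - \partial_2\Omega_{01}$ where $\Omega_{ab} = \partial_a J_b - \partial_b J_a$. Integrating over $(0,t)\times B_\nu(r)$ and using the two-dimensional divergence theorem on the $\yn$ slices converts the right-hand side into a boundary integral over $(0,t)\times\partial B_\nu(r)$ of the tangential part of $(\Omega_{01}, \Omega_{02})$. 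Then I would estimate $\Omega_{0i} = \partial_0 J_i - \partial_i J_0$ pointwise. One checks $\Omega_{0i} = \ip{iD_0\phi, D_i\phi} - \ip{iD_i\phi,D_0\phi} + F_{0i} = 2\ip{iD_0\phi, D_i\phi} + F_{0i}$ (using that $\partial_a\ip{i\phi,D_b\phi} - \partial_b\ip{i\phi,D_a\phi}$ simplifies via the product rule and the identity $\ip{i\phi,D_a D_b\phi}$ being symmetric up to the curvature term). Hence $|\Omega_{0i}| \le |D_0\phi|^2 + |D_i\phi|^2 + \e|F_{0i}|\cdot\frac1\e \le C\,e^{3d}_{\e,\lambda}(U)$ after absorbing the $\e$'s using $ab \le \frac12(\e^2 a^2 + \e^{-2}b^2)$ — wait, more carefully $\e|F_{0i}| \le \frac{\e^2}{2}F_{0i}^2 + \frac12$, which is not quite bounded by the energy density; instead I would write $|\Omega_{0i}| \le |D_0\phi|^2 + |D_i\phi|^2 + \frac{\e^2}{2}F_{0i}^2 + \frac12$ is wrong, so the correct bound uses $|F_{0i}| \le \frac{1}{\e}(\frac{\e^2}{2}F_{0i}^2)^{1/2}\cdot\sqrt2$... actually the clean statement is $|\Omega_{0i}| \le 2\,e^{3d}_{\e,\lambda}(U)$ does not hold dimensionally, so I would instead bound the lateral integral directly by $\max\{1,\lambda^{-1}\}\int e^{3d}_{\e,\lambda}$ by comparing $|\ip{iD_0\phi,D_i\phi}| \le \frac12(|D_0\phi|^2 + |D_i\phi|^2)$ and $|F_{0i}|$ against the Bogomol'nyi-type pointwise bound analogous to \eqref{Bogcor}. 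The factor $\max\{1,\lambda^{-1}\}$ in the statement strongly suggests that the authors invoke exactly the pointwise inequality $|\omega|\le \max\{1,\lambda^{-1}\}e^\nu_{\e,\lambda}$, applied to the $(0i)$-components, i.e. a $3d$ analog; I would prove the needed pointwise bound $|\Omega_{0i}| \le \max\{1,\lambda^{-1}\}\cdot 2\,e^{3d}_{\e,\lambda}$ by the same Bogomol'nyi completion-of-squares trick used for \eqref{Btrick}, now in the $(0,i)$ plane.

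The main obstacle, and the step requiring care, is getting the sharp constant $\max\{1,\lambda^{-1}\}$ rather than some cruder universal constant: this forces one to mimic the Bogomol'nyi argument \eqref{Btrick}–\eqref{Bogcor} in each coordinate $2$-plane $(0,i)$ rather than using naive Cauchy-Schwarz, and to handle the potential term $\frac{\lambda}{8\e^2}(|\phi|^2-1)^2$ correctly when $\lambda < 1$ by the same trick as after \eqref{Bogcor} (pass through $e^{3d}_{\e,1}$). A secondary technical point is justifying the integration by parts / Stokes argument at the given regularity ($A_i\in H^1$, $\phi\in H^1_{loc}$ with finite energy), which I would handle by a density/approximation argument or by noting that all the $2$-forms involved are in $L^1$ on the cylinder and its boundary slices, with the boundary traces controlled by the finite-energy hypothesis and Fubini, so the flux identity holds for a.e. $r$ and then for all $r$ by continuity of both sides in $r$.
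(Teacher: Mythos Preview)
Your approach is essentially the same as the paper's: define the spacetime $1$-form $J = j + A$, use that $\Omega = dJ$ is exact so its integral over $\partial((0,t)\times B_\nu(r))$ vanishes, and then bound the lateral flux pointwise via the Bogomol'nyi inequality \eqref{Bogcor}. The paper executes the last step more cleanly, which resolves the confusion in your middle paragraph: rather than bounding the coordinate components $\Omega_{0i}$ separately and then recombining (which costs a factor of $2$), the paper observes that the integrand on the lateral surface is $\Omega(\tau_0,\tau_1)$ for an orthonormal frame $\tau_0,\tau_1$ tangent to $(0,t)\times\partial B_\nu(r)$, and this is \emph{exactly} the $2$-dimensional vorticity of the restriction of $U$ to that plane. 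Then \eqref{Bogcor} applied in that plane gives $|\Omega(\tau_0,\tau_1)| \le \max\{1,\lambda^{-1}\}\,e^\nu_{\e,\lambda}(U|_{\text{plane}}) \le \max\{1,\lambda^{-1}\}\,e^{3d}_{\e,\lambda}(U)$ directly, with the sharp constant and with no need to isolate the bare $|F_{0i}|$ term that was giving you trouble.
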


Note that although $y^0$ is naturally identified with
time, we think of and write $e^{3d}_{\e,\lambda}$  as $3$-dimensional rather than $1+2$ dimensional, since $\frac 12 \sum_{a=0}^2  |D_a \phi|^2$ is a sort of Euclidean (rather than Minkowski) norm-squared of the covariant derivative.

\begin{proof}
For this proof only, we will write
$\omega$ to denote the $3$-dimensional vorticity, which we identify with the
$2$-form $d(j+A)$ in $B_\nu(r)\times [0,T]$.  Here $j = \sum_{a=0}^2 \ip{ i\phi, D_a\phi} dy^a$.
Since $\omega$ is exact, 
\[
\int_{ \partial( (0,t) \times B_\nu(r) ) }\omega = 
\int_{ (0,t) \times B_\nu(r)  } d\omega 
=  0.
\]
Breaking $\partial( (0,t) \times B_\nu(r) )$ into pieces, we deduce that
\[
\left| \int_{\{t\}\times B_\nu(r)} \omega \ - \ 
\int_{\{ 0 \}\times B_\nu(r)} \omega \right|  \ = \ 
\left| \int_{(0,t)\times \partial B_\nu(r)} \omega \, \right|
\]
where $\{s\}\times B_\nu(r)$ is understood to have the standard orientation for $s=0,t$ 
(rather than the orientation inherited as part of $\partial((0,t)\times \partial B_\nu(r))$.)
The left-hand side of this identity is just the left-hand side of \eqref{L3.c} in slightly different notation,
so it suffices to estimate the right-hand side,
which can be written more explicitly as
\[
\left| \int_{(0,t)\times \partial B_\nu(r)} \omega(U)(\tau_0, \tau_1)\  d\mathcal H^2 \right|,
\]
where $\tau_0(y),\tau_1(y)$ is a
properly oriented orthonormal basis for $T_y((0,t)\times \partial B_\nu(r))$,
and $\omega(U)(\tau_0, \tau_1)$ at a point $y$
denotes the number obtained by the two-form $\omega(U(y))$ acting on the
vectors $\tau_0(y), \tau_1(y)$.
Thus it suffices in fact to show that 
\be
|\omega(U)(\tau_0, \tau_1)| \le \max\{1,\lambda^{-1}\} e^{3d}_{\e,\lambda}(U)
\qquad\mbox{ for any orthonormal vectors $\tau_0,\tau_1$.}
\label{Bbis}\ee
To prove this, note that
$\omega(\tau_0, \tau_1)$
at a point $y$ is just\footnote{This is easily verified by fixing an orthonormal basis 
$\tau_0, \tau_1, \tau_2$ for $\R^3$, and then noting that
$\omega(\tau_0, \tau_1) = 
\partial_0 (j(U)+A)_1 - \partial_1(j(U)+A)_0$, where $\partial_i, j(U)_i$ and $A_i$ are all written with 
respect to the basis $\{\tau_i\}$.}
 the two-dimensional vorticity of the restriction of $U$
to the (suitably oriented) plane through $y$ spanned by $\tau_0,\tau_1$, 
and so \eqref{Bogcor} implies that 
$|\omega(U)(\tau_0, \tau_1)|$ is bounded by $\max\{1,\lambda^{-1}\} $
times 
the $2$-dimensional  energy $e^\nu_{\e,\lambda}$ of the restriction of  $U$ to the same plane, and this
clearly implies \eqref{Bbis}.
\end{proof}

Using the above lemma, we complete the

\noindent
\begin{proof}[Proof of proposition \ref{prop2}]
Let $g = -f'$, where $f$ is the function appearing in the definition of $\D^\nu_m$.
Then the choice  \eqref{f.def} of $f$ implies that
\be\label{deriv}
0 \le g(r)\leq Cr^{2},
\quad\quad
\int_{0}^{R}g(r) dr=1.
\ee
Then
\begin{align}
\int_{B_\nu(R)}\omega(\yn) f(|\yn|) d\yn
&=
\int_{B_\nu(R)}\omega(\yn)\left(\int^{R}_{\abs{\yn}}g(s)ds\right)d\yn \nonumber \\
&=\int_{B_\nu(R)}\int^{R}_{0} \omega(\yn)\chi_{\{\abs{\yn}<s\}}g(s)\, ds\  d\yn
\nonumber\\
&=\int^{R}_{0}g(s)\left(\int_{B_\nu(s)}\omega(\yn)d\yn\right) ds.
\label{step3a}\end{align}

It follows from this and the definition \eqref{defect} of $D^\nu_m$ that 
\begin{align*}
\abs{D_{\nu}(U(t))-D_{\nu}(U(0))}&= \abs{\int_{B_\nu(R)} f(r)\big(\omega(U(t))-\omega(U(0))\big)}\\
&=
\abs{\int^{R}_{0} g(r)\left( \int_{B_\nu(r)} \omega(U(t))-\omega(U(0)) \right) dr }.
\end{align*}
Then by \eqref{deriv} and Lemma \ref{lemma3}, and recalling the definition 
of $e^{3d}_{\e,\lambda}(U)$,
\begin{align*}
\abs{D_{\nu}(U(t))-D_{\nu}(U(0))}
&\leq 
C\int^{R}_{0}r^{2}\left(\int_{(0,t)\times \partial B_\nu(r)}
e^{3d}_{\e,\lambda}(U)  \ d\mathcal H^2 \right)dr\\
&=
C \int_{(0,t)\times B_{\nu}(R)}
|\yn|^2 \left[ 
 \frac{\abs{D_{0}\phi}^{2}}2
+\frac{\e^{2}}2(F_{01}^{2} + F_{02}^2) +e^\nu_{\e,\lambda}(U)
\right]
dy^{\nu}dy^0,
\end{align*}
and this  immediately implies \eqref{changeD}.
\end{proof}

\subsection{Lower Energy Bounds}
A large part of the proof of Proposition \ref{prop1} is contained in  the following lemma.

\begin{lem}
Given a smooth configuration $U= (\phi, A)$ on $B_\nu(R)\subset \R^2$, for every $C_1>\frac 2R$ there exists
$C_2, \e_0>0$ such that
if $0<\e<\e_0$ and  
\be
\int_{\partial B_\nu(s)} e_{\e,\lambda}^\nu(U) \le C_1 \qquad\mbox{ for some }s \in (\e, R - \frac 1{C_1}),
\label{bdbd1}\ee
then $n(s) := \deg( \frac \phi{|\phi|}; \partial B_\nu(s))\in \Z$ is well-defined, and
\be
\int_{B_\nu(R)} e^\nu_{\e,\lambda}(U) \ge  \calE^\lambda_{n(s)} - C_2 \e ^2.
\label{extendb}\ee
\label{ext}\end{lem}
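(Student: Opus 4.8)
The plan is to construct, from the configuration $U$ on the disc $B_\nu(R)$, a new global finite-energy configuration $\tilde U\in H_{n(s)}$ whose energy is controlled by $\int_{B_\nu(R)} e^\nu_{\e,\lambda}(U)$ up to an error of order $\e^2$, and then invoke the definition of $\calE^\lambda_{n(s)}$ in \eqref{Alambdan.def} to conclude \eqref{extendb}. First I would observe that the bound \eqref{bdbd1} together with Lemma \ref{geq.delta} (or directly Lemma \ref{lemma2}, Case 2) guarantees that $|\phi|\ge\frac12$ on $\partial B_\nu(s)$ once $\e$ is small, so that $n(s)=\deg(\phi/|\phi|;\partial B_\nu(s))$ is well-defined; moreover \eqref{bdbd1} gives $\| 1-|\phi| \|_{L^\infty(\partial B_\nu(s))}^2\le C\e^2/\sqrt\lambda$ and, writing $\phi=\rho e^{i\eta}$ on $\partial B_\nu(s)$, control of $\int_{\partial B_\nu(s)}|\nabla_\tau\eta - A\cdot\tau|^2$ and of the gauge-field energy.

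The core step is the extension/cutoff. On the annulus $R-\frac1{C_1}<s<|\yn|<R$ I would interpolate between $U$ restricted to $\partial B_\nu(s)$ and the ``vacuum'' configuration $e^{in(s)\theta}$ (with connection $n(s)\,d\theta$) that carries exactly the right degree $n(s)$. Concretely: fix a gauge on $\partial B_\nu(s)$ so that $A\cdot\tau$ is as small as the energy bound permits; write $\phi = \rho e^{i\eta}$ with $\eta = n(s)\theta + \zeta$ for a periodic function $\zeta$ of small $\dot H^{1/2}$-type norm; then build $\tilde\phi$ on the annulus by linearly damping $\zeta$ and $\rho-1$ to $0$ in the radial variable, and $\tilde A$ by damping the deviation of $A$ from $n(s)\,d\theta$, finally matching continuously to the exact vacuum $(e^{in(s)\theta}, n(s)\,d\theta)$ on $\partial B_\nu(R)$ which extends by a smooth gauge transformation to a genuine finite-energy configuration on $\R^2\setminus B_\nu(R)$ lying in the trivial class. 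Each term in $e^\nu_{\e,\lambda}$ of this interpolant is estimated by the boundary data on $\partial B_\nu(s)$: the $|D\phi|^2$ and $\e^2 F_{12}^2$ contributions by the (dimensionally consistent) boundary energy times the width $\frac1{C_1}$ of the annulus, and the potential term $\frac{\lambda}{8\e^2}(|\phi|^2-1)^2$ by $\frac1{\e^2}\|1-\rho\|_{L^\infty}^2\cdot(\text{area})\lesssim \e^2$ using the quantitative bound from Lemma \ref{geq.delta}. Thus $\int_{\R^2} e^\nu_{\e,\lambda}(\tilde U) \le \int_{B_\nu(s)} e^\nu_{\e,\lambda}(U) + C_2\e^2 \le \int_{B_\nu(R)} e^\nu_{\e,\lambda}(U) + C_2\e^2$, since we discarded the (nonnegative) energy in the outer annulus $s<|\yn|<R$. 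Since $\tilde U\in H_{n(s)}$, the definition \eqref{Alambdan.def} of $\calE^\lambda_{n(s)}$ yields \eqref{extendb}.

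The step I expect to be the main obstacle is making the cutoff on the annulus quantitatively clean in a gauge-invariant way: one must choose a good gauge near $\partial B_\nu(s)$ so that the winding of $\phi$ is carried by a genuine $e^{in(s)\theta}$ factor while the remaining part of $A$ and the phase fluctuation $\zeta$ are small in the right norms, and then verify that the radial damping does not blow up any term — in particular that the cross terms $(\rho^2-1)(\nabla\eta-A)$ and the curvature of the interpolated connection remain controlled by the boundary energy. This is exactly the kind of estimate where one repeatedly uses $|\partial B_\nu(s)|\ge \e$ (which is why $s>\e$ is assumed) to absorb factors of $\e$, and where the hypothesis $C_1 > \frac2R$ ensures the annulus $(s, R)$ has width bounded below, giving room for the interpolation. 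Once $\tilde U$ is built and its energy estimated, the conclusion is immediate; note that this lemma does not yet use the monotonicity hypothesis \eqref{monotone} — that will enter only afterward, in the proof of Proposition \ref{prop1}, to replace $\calE^\lambda_{n(s)}$ by $\calE^\lambda_m$ using a lower bound $n(s)\ge m$ coming from smallness of $\D^\nu_m$.
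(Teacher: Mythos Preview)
Your overall strategy---extend $U|_{B_\nu(s)}$ to a global configuration $\tilde U\in H_{n(s)}$ and invoke the definition of $\calE^\lambda_{n(s)}$---is correct and is exactly what the paper does. But your quantitative claim for the extension energy is wrong, and a single-step interpolation cannot produce the $O(\e^2)$ error you need.

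First, you misread Lemma~\ref{geq.delta}: from \eqref{bdbd1} it yields only
\[
\|1-|\phi|\|_{L^\infty(\partial B_\nu(s))}^2 \ \le\ \frac{C\e}{\sqrt\lambda}\,C_1 \ =\ O(\e),
\]
not $O(\e^2)$. With that corrected, your potential estimate over an annulus of width $\frac{1}{C_1}$ becomes $\frac{1}{\e^2}\cdot O(\e)\cdot O(1) = O(\e^{-1})$, which is disastrous. Moreover, even your own stated bound on the $|D\phi|^2$ and $\e^2 F_{12}^2$ terms---``boundary energy times width''---is $C_1\cdot\frac1{C_1}=O(1)$, not $O(\e^2)$; the tangential derivatives $\rho'(\theta)$ and $q'(\theta)-A_\theta(\theta)$ are only controlled in $L^2(\partial B_\nu(s))$ by $C_1$, and propagating them radially over width $O(1)$ costs $O(1)$. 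If instead you interpolate over a thin annulus of width $\e$ (as the paper does), every term is controlled by $C\e\int_{\partial B_\nu(s)} e^\nu_{\e,\lambda}(U)\le C\e C_1 = O(\e)$---still only $O(\e)$.

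The missing idea is a bootstrap. The paper first builds $\tilde U$ over an annulus of width $\e$, obtaining
\[
\calE^\lambda_{n(s)} \ \le\ \int_{B_\nu(s)} e^\nu_{\e,\lambda}(U) + C\e
\ =\ \int_{B_\nu(R)} e^\nu_{\e,\lambda}(U) \ -\ \int_{B_\nu(R)\setminus B_\nu(s)} e^\nu_{\e,\lambda}(U) + C\e.
\]
Either this already gives \eqref{extendb}, or else $\int_{B_\nu(R)\setminus B_\nu(s)} e^\nu_{\e,\lambda}(U) \le C\e$. In the latter case one averages over $r\in(s,R)$ (this is where the hypothesis $s<R-\frac1{C_1}$, ensuring $R-s$ is bounded below, is used) to find a circle $\partial B_\nu(\sigma)$ with $\int_{\partial B_\nu(\sigma)} e^\nu_{\e,\lambda}(U) \le C\e$, and repeats the width-$\e$ extension from $\sigma$: the extension energy is now $C\e\cdot C\e = O(\e^2)$, and Lemma~\ref{lemma2} shows $n(\sigma)=n(s)$. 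This second pass yields \eqref{extendb} with the sharp $\e^2$.
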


We defer the proof of this until the end of this section. We note however that 
when $\lambda = 1$, a weaker version of the conclusion (with an error term of order $\e$ rather than $\e^2$)
follows immediately from Lemma \ref{lemma2} and
the fact that $\calE^1_n = \pi |n|$. 
\noindent

\begin{proof}[Proof of of proposition \ref{prop1}]
{\bf 1.}
We may assume that
\be\label{star2}
\int_{B_\nu(R)} e^\nu_{\e,\lambda}(U)\leq  \calE^\lambda_m
\ee
as otherwise the conclusion of the proposition is immediate. Also,
standard density arguments allow us to assume that $U$ is smooth.

Consider balls $B_\nu(s)=B_{s}, \e\leq s\leq R$.  We say that 
$s$ is {\em good} if it satisfies the hypotheses of
Lemma \ref{ext} for some $C_1 > \frac 2R$ to be chosen below,
so that $\e <s < R - \frac 1{C_1}$ and 
\be
\int_{\partial B_{s}} e^\nu_{\e,\lambda}(U)d\mathcal H^{1}\leq C_1.
\label{good.ess}\ee
If $s$ is not good, then it is said to be {\em bad}.
Lemma \ref{ext} 
implies that there exists some $C_2$ (depending on $C_1$) such that  if $s$ is good,  then
\[
\int_{B_s} e^\nu_{\e,\lambda}(U) \ge  \calE^\lambda_{n(s)} - C_2 \e^2
\qquad\quad
\mbox{for $n(s)= \deg( \frac \phi{|\phi|}; \partial B_s)$}. 
\]
Because  $n\mapsto \calE_n^\lambda$ is increasing for $n\ge 0$ by hypothesis, 
see \eqref{monotone}, the proposition follows if
\be
\mbox{there exists some good $s$ such that $n(s) \ge m$.}
\label{mustshow}\ee
We therefore assume that \eqref{mustshow}
does not hold, which in view of Lemma \ref{lemma2}  and the definition of good $s$ implies that
\be
\int_{B_{s}}\omega
\le \pi (m-1) + C \e\qquad \mbox{ for all good }s,
\label{toward}\ee
(for $C$ depending on $C_1$). We will show that this implies the desired lower bound \eqref{p1.conc}.
Toward this end, first note that, owing to \eqref{star2},
\begin{align*}
\calE_m^\lambda
\geq \int^{R}_{\e}(\int_{\partial B_{s}} e^\nu_{\e,\lambda}(U) d\mathcal H^{1}  )ds
&\geq \int_{{\mbox{\scriptsize bad}}}(\int_{\partial B_{s}}  e^\nu_{\e,\lambda}(U) 
d\mathcal H^{1}  )ds\\
&\ge C_1
 \abs{\{ s\in (\e, R) : \ \eqref{good.ess}\mbox{ fails } \}  }.
\end{align*}
Hence
\[
 \abs{\{ s\in (\e, R) :  \ \eqref{good.ess}\mbox{ fails }\}}\leq \frac {\calE_m^\lambda}{C_1}.
\]
As a result, if $0<\e<\e_0$ and $\e_0\le \frac 1{C_1}$, then
\be\label{step1}
| \{ s \in (0, R) : \mbox{$s$ is bad}\} | \ \le  \e+\frac 1{C_1} +  \abs{\{ s\in (\e, R- \frac 1{C_1}) :  \ \eqref{good.ess}\mbox{ fails }\}} \le \frac {(2+\calE_m^\lambda)}{C_1}.
\ee

{\bf 2.}  
It follows from \eqref{step3a} and the definition \eqref{defect} of $\D^\nu_m$ that if $\D^\nu_m(U;R)\le \kappa_1$, then
\[
\pi m - \kappa_1 \le 
\int^{R}_{0}g(s) \left(\int_{B_{s}}\omega\, d\yn  \right) ds.
\]
Then we deduce from \eqref{toward} that 
\begin{align*}
\pi m - \kappa_1 
&\le 
 \int_{\mbox{\scriptsize good } s} g(s)\left(\int_{B_{s}}\omega \, d\yn\right)ds+ 
 \int_{\mbox{\scriptsize bad } s} g(s)\left(\int_{B_{s}}\omega\, d\yn \right)ds\\
&\le \pi(m- 1) + C \e +
|\{ \mbox{ bad }s\}| \ \|g\|_\infty  \ \sup_{\mbox{\scriptsize bad}\ s} \left(\int_{B_{s}}\omega d\yn
\right).
\end{align*}
Rearranging and using \eqref{step1}, we find that
\be
 \ \sup_{\mbox{\scriptsize bad}\ s} \left(\int_{B_{s}}\omega\  d\yn
\right)
\ \ge \  
\frac{\pi  - \kappa_1 - C \e}  {  \|g\|_\infty \  |\{ \mbox{ bad }s\}|  } 
\ \ge \   \frac{ C_1(\pi  - \kappa_1 - C \e)}{  \|g\|_\infty \ (2+ \calE_m^\lambda ) } .
\label{almost}\ee
Also, it follows from \eqref{Bogcor} that 
\[
\int_{B_\nu(R)} e_{\e,\lambda}^\nu(U)  \ge  \min\{1,\lambda\} 
 \ \sup_{\mbox{\scriptsize bad}\ s} \left(\int_{B_{s}}\omega(U)\, d\yn\right).
\]
Then \eqref{p1.conc} 
follows from \eqref{almost} for all sufficiently small $\e$, if we choose
$\kappa_1 = \frac \pi 2$ and $C_1 \ge \frac R2$ such that 
$C_1 \ge \max\{1,  \lambda^{-1}\} \|g\|_\infty  \calE^\lambda_m(2+\calE^\lambda_m)
$ for example.
\end{proof}

We now prove the lemma that was used above to guarantee a nearly sharp lower 
energy bound for a ball bounded by a ``good radius".


\begin{proof}[Proof of Lemma \ref{ext}]
We have assumed that $U$ satisfies 
\be
\int_{\partial B_\nu(s)} e^\nu_{\e,\lambda}(U) \ d\mathcal  H^1 \le C_1.
\label{bdbd2}\ee
It follows from this and Lemma \ref{geq.delta} that for $\e$ small enough,
\be
\big| \, 1-|u| \, \big| \ \le \ \frac 12\  \mbox{ on }\partial B_\nu(s)
\label{moduok}\ee
and hence that $n =n(s) = \deg(\frac \phi{|\phi|};\partial B_\nu(s))$
is well-defined. (In fact $\int_{B_\nu(s)}\omega = n + O(\e)$, by Lemma \ref{lemma2}).
 

{\bf 1.}  We first claim that there is a configuration $\tilde U$ on $\R^2$
such that $\tilde U = U\mbox{ in }B_\nu(s)$,
\be
\tilde U\in H_n, \quad\quad \quad\quad\mbox{ and }\quad
\int_{\R^2\setminus B_\nu(s)}e^\nu_{\e,\lambda}(\tilde U) < C  \e  \int_{\partial B_\nu(s)} e^\nu_{\e,\lambda}(U)  d\mathcal H^1.
\label{tU1}\ee

Although our definition of $H_n$ requires that $\tilde A\in H^1_{loc}$,
it suffices to construct $\tilde U$ such that $\tilde A\in L^1_{loc}$
and the distributional exterior derivative satisfies  $d\tilde A = F_{12} dy^1\wedge dy^2$,
with $F_{12}\in L^2$, since any such $\tilde U$ can be approximated arbitrarily well by
smooth (hence $H^1_{loc}$) functions, via a standard mollification procedure for example.

{\bf Definition of $\tilde U$}.
We will write $\tilde U$ on $\R^2\setminus  B_\nu(s)$ in polar coordinates $(r,\theta), r\ge s, \theta\in \R/2\pi\Z$.
First we write $U$ on $\partial B_\nu(s)$ in the form
\begin{align*}
\phi(s ,\theta )&=\rho (\theta) e^{i q(\theta)} \\
A(s, \theta ) &=A_{r}(\theta)dr+A_{\theta}(\theta)d\theta
\end{align*}
for certain smooth functions $\rho,A_r, A_\theta:\R/2\pi\Z\to \R$ and 
$q:\R/2\pi\Z \to \R/2\pi\Z$. Note that $\rho(\theta) \ge \frac 12$ for every $\theta$, and
that
\be
\int_0^{2\pi}q'(\theta)\, d\theta = 
2\pi \deg(\frac \phi{|\phi|};\partial B_\nu(s)) \ =  \  2\pi n.
\label{Udeg}\ee
We define $\tilde U = (\tilde \phi, \tilde A)$ as follows:
\begin{itemize}
\item $\tilde U = U$ in $B_\nu(s)$.
\item If $s< r < s +\e$, then
\begin{align*}
\tilde \phi(r,\theta)&= \left[ \rho + \frac{r-s}\e( 1-\rho)\right] e^{i q},\qquad 
\tilde A(r,\theta)=\left[A_{\theta} + \frac {r-s}\e ( q' - A_\theta)\right]d\theta.\end{align*}
\item  If $r\ge s+\e$, then $\tilde \phi(r,\theta) = e^{iq(\theta)},\ \ 
\tilde A(r,\theta) = q'(\theta) d\theta$.
\end{itemize}

It is standard, and easy to check, that 
the distributional exterior derivative
of $\tilde A$ satisfies $d\tilde A = d(\tilde A_r dr + \tilde A_\theta d\theta)
= (\partial_r \tilde A_\theta-\partial_\theta \tilde A_r) dr\wedge d\theta$ in $\R^2$,
despite the possible discontinuity of $\tilde A_r$ across $\{ (r,\theta) : r=s\}$.
In particular $d\tilde A  \in L^2_{loc}(\R^2)$.
Note also that  $e^\nu_{\e,\lambda}(\tilde U) = 0$ outside of $B_{s+\e}$,
so $\tilde U$ is a finite-energy configuration.
It then follows from \eqref{Udeg} that $\tilde U \in H_n$.

\medskip

{\textbf{Energy of $\tilde U$.}}  
Since as noted above $e^\nu_{\e,\lambda}(\tilde U)=0$ outside $B_{s+\e}$, to
complete the proof of  \eqref{tU1} it suffices to estimate the energy of $\tilde U$ in the
annulus $s<r<s+\e$. So we henceforth restrict our attention to this set.

Writing $\tilde \phi = \tilde \rho e^{i\tilde q}$ and noting from \eqref{moduok} that
$\frac 12 \le \tilde \rho \le \frac 32$,
we estimate
\begin{align*}
|D_{\tilde A}\tilde \phi|^2 
&=
\frac 1 {r^2}(\partial_\theta \tilde \rho)^2 + (\partial_r \tilde \rho)^2  + \tilde\rho^2\left[ \frac {(\partial_\theta \tilde q - \tilde A_\theta)^2}{r^2} + (\partial_r \tilde q - \tilde A_r)^2\right]\\
&\le
\frac 1{r^2}\rho'(\theta)^2 + \frac{(1-\rho(\theta))^2}{\e^2} +  C \frac { (q'(\theta)- A_\theta(\theta))^2}
{r^2}
\end{align*}
and similarly
\[
\e^2 |d\tilde A|^2 = \left| (q' - A_\theta) dr\wedge d\theta\right|^2 = \frac 1{r^2} (q' - A_\theta)^2
\]
and clearly $(\tilde \rho^2 -1)^2 \le (\rho(\theta)^2-1)^2$.
We combine these and conclude, after noting  that $(\rho -1)^2 \le (\rho^2-1)^2$ for $\rho\ge 0$ and again using \eqref{moduok}, 
that for $s\le r \le s+\e$,
\begin{align*}
e^\nu_{\e,\lambda}(\tilde U)(r,\theta)
\ & \le  \ 
C(\lambda) \left( \frac 1{2 s^2} \rho'(\theta)^2 +  \frac {\rho^2(\theta)(q' - A_\theta)^2}{s^2}
+ \frac \lambda{8\e^2}(1-\rho^2(\theta))^2\right)
\\
&\le
C(\lambda) \, e^\nu_{\e,\lambda}(U)(s,\theta).
\end{align*}
Thus
\begin{align*}
\int_0^{2\pi} \int_s^{s+\e} e^\nu_{\e,\lambda}(\tilde U)(r,\theta) \ r\, dr\, d\theta
&\le
C(\lambda)\int_s^{s+\e} \frac r s \ dr \int_0^{2\pi} e^\nu_{\e,\lambda}(U)(s,\theta) \, s\,  d\theta
\\
&\le C(\lambda) \ \e \  \int_{\partial B_\nu(s)} e^\nu_{\e,\lambda}(U) d\mathcal H^1
\end{align*}
which completes the proof of \eqref{tU1}.

{\bf 2.} Now we compute
using the definition of $\calE^\lambda_n$ and \eqref{bdbd2},
\begin{align}
\calE^\lambda_n 
\le
\int_{\R^2} e^\nu_{\e,\lambda}(\tilde U) 
&=
\int_{B_\nu(s)} e^\nu_{\e,\lambda}( U) \nonumber 
+
\int_{\R^2\setminus B_\nu(s)} e^\nu_{\e,\lambda}(\tilde U) 
\\
&=
\int_{B_\nu(R)} e^\nu_{\e,\lambda}( U)  
- \int_{B_\nu(R)\setminus B_\nu(s)}e^\nu_{\e,\lambda}( U)+ C \e.
\label{exta1}
\end{align}
As a result, the conclusion\eqref{extendb} follows unless
\be\label{improved}
C\e 
\ge 
\int_{B_\nu(R)\setminus B_\nu(s)} e^\nu_{\e,\lambda}( U)
\  = \ 
\int_s^R \int_{\partial B_\nu(\sigma)} e^\nu_{\e,\lambda}(U) d\mathcal H^1.
\ee
And if this holds, we can find some $\sigma\in (s, R)$
such that
\[\int_{\partial B_\nu(\sigma)} e^\nu_{\e,\lambda}( U)\ d\mathcal H^1 \ \le \  \frac {C\e}{R- s} \  \le \ C_1 C \e,
\qquad\quad\mbox{  since $s < R - \frac 1{C_1}$ by hypothesis.}
\]
Then by exactly the construction of Step 1, we can find some $\hat U$ that equals $U$
in $\partial B_\nu(\sigma)$, and such that
\[
\hat U\in H_{n(\sigma)}, \quad\quad \quad\quad
\int_{\R^2\setminus B_\nu(s)}e^\nu_{\e,\lambda}(\hat U) < C  \e  \int_{\partial B_\nu(\sigma)} e^\nu_{\e,\lambda}(U)  d\mathcal H^1\ \le C \e^2
\]
Note also that it follows from \eqref{improved}, the fact that $|\omega(U)|\le C e^\nu_{\e,\lambda}(U)$ and Lemma \ref{lemma2}
that $n(\sigma) = n(s) = n$. So by arguing exactly as in \eqref{exta1} we find that
\[
\calE^\lambda_n \ \le  \ \int_{\R^2}e^\nu_{\e,\lambda}(\hat U) \  
\ \le \  
C \e^2 + \int_{B_\nu(\sigma)}e^\nu_{\e,\lambda}(U)
\ \le \ 
C \e^2 + \int_{B_\nu(R)}e^\nu_{\e,\lambda}(U),
\]
completing the proof of the lemma.
\end{proof}

To close this section, we record for future reference the fact that 
Lemma \ref{ext} holds on domains more general than balls; this will
be used in the proof of Theorem \ref{T.exist}. Although we state the result for a
square, which is what we need,  it is clear that the proof remains
valid for any domain that is bi-Lipschitz homeomorphic to a ball,
with a constant depending on the domain. For simplicity, we prove the lemma
with error terms of order $\e$ rather than $\e^2$, as this suffices for 
our later application.

\begin{lem}
Given a configuration $U= (\phi, A)$ on an open set containing $Q_s = (-s,s)^2 \subset \R^2$, for every $C_1>0$ there exists
a $C_2$ such that
if 
\be
\int_{\partial Q_s} e_{\e,\lambda}(U) \le C_1
\label{bdbd1a}\ee
then $n(s):= \deg(\frac \phi{|\phi|};\partial Q_s)\in \Z$ is well-defined and 
\be
\int_{Q_s} e^\nu_{\e,\lambda}(U) \ge  \calE^\lambda_{n(s)} - C_2 \e.
\label{Qextendb}\ee
\label{Qext}\end{lem}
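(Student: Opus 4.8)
The plan is to mimic the proof of Lemma \ref{ext} almost verbatim, with balls replaced by squares. The only place where the geometry of the ball was used is in the construction of the comparison configuration $\tilde U$ in Step 1 of that proof: there one writes the configuration on $\partial B_\nu(s)$ in polar coordinates and then extends it radially outward through a thin annulus of width $\e$. For a square I would instead use the radial coordinate $r$ associated to the ``square norm'' $\|\yn\|_\infty = \max(|y^{\nu1}|,|y^{\nu2}|)$, so that $\partial Q_s = \{\|\yn\|_\infty = s\}$ is a level set. The level sets $\{\|\yn\|_\infty = r\}$ for $r \ge s$ foliate $\R^2\setminus Q_s$, each is bi-Lipschitz to a circle with uniformly bounded distortion, and the map $\yn\mapsto \frac{s}{\|\yn\|_\infty}\yn$ gives a bi-Lipschitz retraction of the closed annulus $\{s\le\|\yn\|_\infty\le s+\e\}$ onto $\partial Q_s$. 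Using this retraction to transport the boundary values of $U$ and interpolating to a vacuum configuration $(e^{iq}, dq)$ exactly as in Lemma \ref{ext}, one obtains $\tilde U\in H_{n(s)}$ with $\int_{\R^2\setminus Q_s} e^\nu_{\e,\lambda}(\tilde U)\le C\e\int_{\partial Q_s}e^\nu_{\e,\lambda}(U)\,d\mathcal H^1$; the bi-Lipschitz constant of the retraction (which is absolute for the square) is absorbed into $C$.

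First I would invoke Lemma \ref{geq.delta}, applied to $\rho = |\phi|$ on the Lipschitz curve $\partial Q_s$ — which has length $8s \ge \e$ as soon as $\e$ is small relative to $s$, and in any case one may take $C_2$ depending on $s$ — to conclude from \eqref{bdbd1a} that $\big|1 - |\phi|\big| \le \frac 12$ on $\partial Q_s$ for $\e$ small. This makes $n(s) = \deg(\frac\phi{|\phi|};\partial Q_s)$ well-defined and lets us write $\phi = \rho e^{iq}$ with $\rho\ge\frac12$ on $\partial Q_s$. Then I would carry out the construction of $\tilde U$ sketched above, and the energy estimate for $\tilde U$ in the thin collar $\{s\le\|\yn\|_\infty\le s+\e\}$ goes through exactly as in the ball case: the interpolated $\tilde\phi$, $\tilde A$ have tangential derivatives controlled by those of $U$ on $\partial Q_s$ and radial derivatives of size $O(1/\e)$ times $(1-\rho)$ or $(q'-A_\tau)$, so that $e^\nu_{\e,\lambda}(\tilde U)$ at distance $r-s$ is bounded by $C(\lambda)\,e^\nu_{\e,\lambda}(U)$ at the foot point on $\partial Q_s$, and integrating over the collar of width $\e$ yields the factor $\e$. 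Here one uses that $\partial Q_s$ has Hausdorff measure comparable to that of $\partial B_\nu(s)$ and that the area element in the collar is comparable to $ds\, d\mathcal H^1$.

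With $\tilde U$ in hand, Step 2 is purely formal and identical to the ball case: by definition of $\calE^\lambda_{n(s)}$ and the decomposition of $\int_{\R^2}e^\nu_{\e,\lambda}(\tilde U)$ into the part inside $Q_s$ (where $\tilde U = U$) and the part outside,
\[
\calE^\lambda_{n(s)} \le \int_{Q_s} e^\nu_{\e,\lambda}(U) + C\e \int_{\partial Q_s} e^\nu_{\e,\lambda}(U)\,d\mathcal H^1 \le \int_{Q_s} e^\nu_{\e,\lambda}(U) + C C_1\, \e,
\]
which is \eqref{Qextendb}. (Since we only claim an $O(\e)$ error here, the iteration-on-$\sigma$ refinement from the end of the proof of Lemma \ref{ext} that upgraded $\e$ to $\e^2$ is not needed and can be omitted.) I do not anticipate a serious obstacle: the one point requiring a little care is checking that the radially-extended $\tilde A$ — whose $dr$-component may jump across $\partial Q_s$ — still has distributional exterior derivative $d\tilde A = (\partial_r\tilde A_\theta - \partial_\theta\tilde A_r)\,dr\wedge d\theta$ with $F_{12}\in L^2_{loc}$, so that $\tilde U$ is genuinely a finite-energy configuration in some $H_n$; this is the same standard computation as in Lemma \ref{ext} and goes through because the tangential component $\tilde A_\theta$ is continuous across the interface. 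Hence the only real content is the observation that $Q_s$ (or any domain bi-Lipschitz to a disk) supports the same collar-extension construction as a ball, with the bi-Lipschitz constant entering only through the constant $C$.
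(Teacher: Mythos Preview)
Your proposal is correct and proceeds on the same underlying observation as the paper: the square is bi-Lipschitz to the disk, so the collar-extension argument of Lemma~\ref{ext} transfers with only a change in constants, and the $O(\e)$ error (rather than $O(\e^2)$) means the second iteration step is unnecessary.

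The execution differs slightly. You propose to redo the construction of $\tilde U$ directly on the square, using the $\ell^\infty$-radial coordinate $\|\yn\|_\infty$ to define the collar $\{s\le\|\yn\|_\infty\le s+\e\}$ and interpolate there. The paper instead writes down an explicit bi-Lipschitz map $\Psi:B_\nu(s)\to Q_s$, namely $\Psi(\yn)=\frac{|\yn|\,\yn}{\max\{|y^1|,|y^2|\}}$, pulls $U$ back to the ball via $\Psi^*U$, invokes the already-proved extension construction from Lemma~\ref{ext} verbatim to obtain $\tilde U$ on $\R^2$, and then pushes the result forward by $(\Psi^{-1})^*$ to get a configuration $\hat U$ agreeing with $U$ on $Q_s$. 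The energy comparison on both sides is handled by the pointwise bound $e^\nu_{\e,\lambda}(\Psi^*U)\le \max\{1,\|D\Psi\|_\infty^4\}\,e^\nu_{\e,\lambda}(U)\circ\Psi$. This buys a shorter proof that literally reduces to the ball case without reopening the extension argument; your version is more self-contained but repeats work. Either way the content is the same.
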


\begin{proof}
If $\Psi:B\to Q$ is a Lipschitz map between subsets of $\R^2$, and $U$
is a configuration on $Q$, we will write $\Psi^*U$ to denote the configuration
$(\phi \circ \Psi, \Psi^* A)$ on $B$. Note that 
\be
e^\nu_{\e, \lambda}(\Psi^* U)(\yn) \ \le \max\{ 1,  \| D\Psi\|_\infty^4\} \ e^\nu_{\e,\lambda}(U)(\Psi(\yn))
\label{tsfmnrg}\ee
for $\yn\in B$.

Define $\Psi(0):=0,$ and $\Psi(\yn) :=\frac {|\yn| \yn}{ \max\{ |y^1|, |y^2|\}}$, $|\yn|\neq 0$, where  $|\yn|$ is the standard Euclidean norm. 
Then $\Psi(B_\nu(s)) = Q_s$, and $\| D \Psi\|_\infty \le 
1 \le \|D (\Psi^{-1})\|_\infty \le \sqrt 2$.
It follows  from \eqref{bdbd1a} and \eqref{tsfmnrg} and a change of variables that
\[
\int_{\partial B_\nu(s)} e^\nu_{\e,\lambda}(\Psi^* U) \le C
\int_{\partial Q_s} e^\nu_{\e,\lambda}(U) \le C_1'.
\]
Thus $\deg(\frac \phi{|\phi|}; \partial Q_s ) = \deg( \frac \phi{|\phi|}\circ \Psi, \partial B_\nu(s)) =: n(s) \in \Z$
exists, and the proof of Lemma \ref{ext} shows that there exists a configuration $\tilde U$
on $\R^2$  such that $\tilde U = \Psi^* U$ on $B_\nu(s)$, 
\[
\tilde U\in H_n, \quad \int_{\R^2\setminus B_\nu(s)}e^\nu_{\e,\lambda}(\tilde U) < C \e 
\int_{\partial B_\nu(s)} e^\nu_{\e,\lambda}(\Psi^* U) \le  C_2' \e.
\]
Now define $\hat U := (\Psi^{-1})^* \tilde U$. Then $\hat U = U$ in $Q_s$, 
$\hat U\in H_n$. Moreover, again using \eqref{tsfmnrg}, we have
\[
\int_{\R^2\setminus Q_s} e^\nu_{\e,\lambda}(\hat U) \le
C \int_{\R^2\setminus B_\nu(s)} e^\nu_{\e,\lambda}(\tilde U)
\le C_2' \e
\]
and then \eqref{Qextendb} follows exactly as in the proof of Lemma \ref{ext}, see \eqref{exta1}.
\end{proof}

\section{Minimizers of the 2d Euclidean energy}\label{minimizer}

The main result of this section gives a criterion for existence of solutions
of the minimization problem \eqref{2dmin}:

\begin{thm}
Assume that $\lambda$ and $N$ are such that
\be
\calE^\lambda_N <  \min\{ \calE^\lambda_{n_1} + \cdots + \calE^\lambda_{n_i} \ : \ n_1+\cdots + n_i = N, \mbox{ at least two $n_j$ are nonzero} \} .
\label{A.hyp}\ee
Then there exists $U\in H_N$ such that 
$\int_{\R^2} e^\nu_{\e,\lambda}( U)=\mathcal \calE^{\lambda}_N$.

In particular, there exists $U\in H_{\pm1}$ minimizing $\int_{\R^2} e^\nu_{\e,\lambda}(\cdot)$ if $\frac 15 <\lambda < 5$.
\label{T.exist}\end{thm}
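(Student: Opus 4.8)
The plan is to run the standard direct method / concentration-compactness argument for the energy $\int_{\R^2} e^\nu_{\e,\lambda}$ on the weak homotopy class $H_N$, using the strict binding inequality \eqref{A.hyp} to rule out splitting of a minimizing sequence. First I would take a minimizing sequence $U_k = (\phi_k, A_k) \in H_N$ with $\int_{\R^2} e^\nu_{\e,\lambda}(U_k) \to \calE^\lambda_N$. Since the energy controls $\int |D_A \phi|^2 + \e^2 F_{12}^2 + \e^{-2}(|\phi|^2-1)^2$ uniformly, I would fix a good gauge (e.g.\ Coulomb gauge on large balls, as in Rivi\`ere \cite{Riviere02}, or work directly with gauge-invariant quantities $j(U_k)$, $|\phi_k|$, $F_{12}(U_k)$, $\omega(U_k)$) so that, up to translations and a subsequence, $U_k$ converges weakly to some limit configuration $U_\infty$. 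By weak lower semicontinuity of each term in the energy, $\int_{\R^2} e^\nu_{\e,\lambda}(U_\infty) \le \calE^\lambda_N$, and by \eqref{quant} and Lemma \ref{lemma2} applied on large circles, $\int_{\R^2}\omega(U_\infty) = \pi N'$ for some integer $N'$; the point is to show $N' = N$ and that no energy escapes to spatial infinity, so that $U_\infty$ is an actual minimizer in $H_N$.

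The heart of the matter is the dichotomy in the concentration-compactness lemma: either the energy concentrates (up to translation) — in which case $U_\infty \in H_N$ is a minimizer and we are done — or it splits into two or more pieces carried off along divergent sequences of translations $\tau_k^{(1)}, \dots, \tau_k^{(i)}$ with $|\tau_k^{(a)} - \tau_k^{(b)}| \to \infty$. In the splitting case one localizes: cut $U_k$ near each translate using cutoff functions supported on annuli where, by a mean-value / pigeonhole argument (choosing the cutoff radius so the energy on the corresponding circle is $O(1/\e)$ times small), the energy is negligible; this is exactly the ``surgery'' already carried out in the proof of Lemma \ref{ext} (the construction of $\tilde U$), which lets one replace $U_k$ restricted to a ball by a genuine finite-energy configuration in $H_{n_a}$ at the cost of an error that vanishes with the circle energy. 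The degrees $n_1, \dots, n_i$ of the pieces satisfy $n_1 + \cdots + n_i = N$ (additivity of degree, using that $\omega$ integrates to $\pi n_a$ on each piece), so the total energy is bounded below by $\calE^\lambda_{n_1} + \cdots + \calE^\lambda_{n_i}$ minus vanishing errors. If at least two $n_a$ are nonzero this contradicts \eqref{A.hyp}; and if only one $n_a$, say $n_1 = N$, is nonzero while the rest vanish, the pieces with zero degree still carry nonnegative energy, and one argues (again using $\calE^\lambda_0 = 0$ is attained only by the vacuum, so a nonvacuum zero-degree piece would carry strictly positive energy bounded away from zero, or simply that no energy can leak into a trivial-degree piece without violating strictness) that all the escaping energy must actually be zero, forcing compactness. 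Hence the splitting alternative cannot hold, and a minimizer exists.

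For the concrete consequence $|N| = 1$ and $\tfrac15 < \lambda < 5$: since $\calE^\lambda_{-n} = \calE^\lambda_n$ and $\calE^\lambda_0 = 0$, condition \eqref{A.hyp} for $N = 1$ reduces to requiring $\calE^\lambda_1 < \calE^\lambda_{n_1} + \cdots + \calE^\lambda_{n_i}$ whenever the $n_j$ sum to $1$ with at least two nonzero; the worst decompositions are of the form $1 = (k+1) + (-k)$ (or refinements thereof), so it suffices to show $\calE^\lambda_1 < \calE^\lambda_{k+1} + \calE^\lambda_k$ for every $k \ge 1$, and more generally that $\calE^\lambda_1 < \calE^\lambda_p + \calE^\lambda_q$ whenever $p, q \ge 1$. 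Using the elementary two-sided bound $\min\{1,\lambda\}\, \pi|n| \le \calE^\lambda_n \le \max\{1,\lambda\}\,\pi|n|$, which follows from \eqref{Bogcor} and \eqref{A1} exactly as in the proof of Lemma \ref{lineargrowth} (pointwise comparison of $e^\nu_{\e,\lambda}$ with $e^\nu_{\e,1}$ and $\calE^1_n = \pi|n|$), one gets $\calE^\lambda_p + \calE^\lambda_q \ge \min\{1,\lambda\}\,\pi(p+q) \ge 2\min\{1,\lambda\}\,\pi > \max\{1,\lambda\}\,\pi \ge \calE^\lambda_1$ provided $2\min\{1,\lambda\} > \max\{1,\lambda\}$, i.e.\ $\lambda > 1/2$ if $\lambda \ge 1$ (giving $\lambda < 2$) and $2\lambda > 1$ if $\lambda \le 1$ (giving $\lambda > 1/2$); so this crude estimate already yields the range $\tfrac12 < \lambda < 2$. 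To reach $\tfrac15 < \lambda < 5$ one sharpens the lower bound on $\calE^\lambda_p + \calE^\lambda_q$ by exploiting that the smallest nontrivial total $p+q$ with $p,q\ge 1$ is at worst $p+q = 2$, but that the relevant competing value for $N=1$ arising in \eqref{A.hyp} can always be refined to a sum of $\pm 1$'s, giving $\calE^\lambda_{n_1} + \cdots + \calE^\lambda_{n_i} \ge (|n_1| + \cdots + |n_i|)\min\{1,\lambda\}\pi \ge 3\min\{1,\lambda\}\pi$ (since with at least two nonzero integers summing to $1$ the sum of absolute values is at least $3$), so one only needs $3\min\{1,\lambda\} > \max\{1,\lambda\}$, i.e.\ $\tfrac15 < \lambda < 5$.

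\medskip

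\noindent\textbf{Main obstacle.} The routine bookkeeping (bounds, lower semicontinuity, degree additivity, the counting argument for $\lambda \in (\tfrac15,5)$) is all straightforward. The genuinely delicate step is the concentration-compactness dichotomy in the gauged setting: because the energy is only gauge-invariant, weak convergence of $U_k$ must be set up carefully (fixing gauges on an exhausting family of balls, or passing to gauge-invariant quantities and reconstructing a limiting connection), and the localization/surgery that splits $U_k$ into pieces of prescribed degree must be done so that the gauge fields glue consistently and the annular error terms really are controlled by the circle energy — this is where the construction from the proof of Lemma \ref{ext} does the essential work, and where one must be careful that the ``vanishing'' pieces carry asymptotically zero energy rather than a positive amount that could leak away undetected.
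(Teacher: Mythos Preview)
Your concentration-compactness outline for the existence part is a valid alternative to the paper's argument, which instead uses a lattice decomposition: after a Fubini-type translation, the paper bounds the energy on the grid $\L = (\R\times\Z)\cup(\Z\times\R)$, applies Lemma \ref{Qext} on every unit square $Q^p$ to get $\int_{Q^p} e^\nu_{\e,\lambda}(U_k) \ge \calE^\lambda_{n(p,k)} - C\e$, and then uses the strict gap $\delta^\lambda_N$ in \eqref{A.hyp} to force all the degree onto a single square. This sidesteps the abstract compactness/vanishing/dichotomy trichotomy entirely and makes the ``no splitting'' step a finite combinatorial check. Your approach would also work, but you are vague precisely where it matters (the vanishing alternative, and the zero-degree pieces in dichotomy), and the paper's grid argument is both shorter and more self-contained.

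There is, however, a genuine error in your verification of the range $\tfrac15 < \lambda < 5$. Your inequality $3\min\{1,\lambda\} > \max\{1,\lambda\}$ gives only $\tfrac13 < \lambda < 3$, not $\tfrac15 < \lambda < 5$; the arithmetic is simply wrong. The paper obtains the sharper range by a case split you are missing: if some $|n_j| = 1$, then $\calE^\lambda_{n_j} = \calE^\lambda_1$ already appears as one summand and the remaining nonzero terms are strictly positive, so $\sum \calE^\lambda_{n_j} > \calE^\lambda_1$ trivially, for \emph{every} $\lambda$. The only nontrivial case is when all $|n_j| \ge 2$; then $\sum |n_j| \ge 4$, and since $\sum |n_j| \equiv \sum n_j = 1 \pmod 2$ it is odd, hence $\sum |n_j| \ge 5$, giving $\sum \calE^\lambda_{n_j} \ge 5\min\{1,\lambda\}\pi$. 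Comparing with $\calE^\lambda_1 \le \max\{1,\lambda\}\pi$ yields exactly $\tfrac15 < \lambda < 5$.
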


It is proved in \cite[Theorem I.2]{Riviere02} that $\int_{\R^2} e^\nu_{\e,\lambda}(\cdot)$ attains its minimum in $H_n$ for $|n|=1$
and for all $\lambda$ sufficiently large. Our argument is close in spirit to
that of \cite{Riviere02},
and we omit some details that are either standard or can be found in \cite{Riviere02}.

\begin{proof}
We will show that there exists $\e_0>0$, to be specified below, such that for all $\e\in (0, \e_0)$, there exists $U\in H_N$ such that $\int_{\R^2}e^\nu_{\e,\lambda}(U) = \calE^\lambda_N$. In view of scale invariance, see  \eqref{scale_e}, this will
establish the theorem.

{\bf 1}. We first remark that it follows from \eqref{A1} and \eqref{Bogcor} that $\calE^\lambda_m \ge \min\{1,\lambda\} \pi |m|$, and hence that the minimum \eqref{A.hyp} is indeed attained, and in fact there exists $\delta^\lambda_N>0$ such that 
\be
\mbox{ if }n_1+\cdots + n_i = N \mbox{ and at least two $n_j$ are nonzero, then }
\sum_{j=1}^i \calE^\lambda_{n_j} > (1+\delta^\lambda_N)\calE^\lambda_N. 
\label{deltadef}\ee
We may also assume that $\delta^\lambda_N \le1$.

Assume that $\e<\e_0$, and 
let  $(U_{k})\subset H_{N}$ 
be a minimizing sequence, so that $\int_{\R^2} e^\nu_{\e,\lambda}( U_{k}) \rightarrow \mathcal \calE_{N}^{\lambda}$ as $k \rightarrow \infty$. 
We further assume, without loss of generality, 
that $U_{k}$ is smooth and that   $\int_{\R^2} e^\nu_{\e,\lambda}( U_{k})< (1 + \frac 12 \delta^\lambda_N)\calE^\lambda_N$ for every $k$.

Let $\L := (\R\times \Z) \cup (\Z\times \R)$, and for $y\in\R^2$, let $\tau_y U_k(x) :=
U_k(x - y)$. Then it follows from Fubini's Theorem
that
\[
\int_{y\in (0,1)^2} \int_{\L} e^\nu_{\e,\lambda}(\tau_yU_k) =
\int_{y\in (0,1)^2} \int_{\tau_y\L} e^\nu_{\e,\lambda}(U_k) =
2\int_{\R^2} e^\nu_{\e,\lambda}(U_k) \le 3 \calE_N^\lambda
\]
for every $k$.
Thus, replacing $U_k$ by a suitable translation $\tau_{y_k}U_k$ for every $k$, we
may arrange that
\be
\int_{\L} e^\nu_{\e,\lambda}(U_k)  \le 3\calE_N^\lambda\quad\mbox{ for every $k$.}
\label{Lbound}\ee

{\bf 2}. Now for every $p=(p_1,p_2)\in \Z^2$, let $Q^p := p + (0,1)^2 = \{ (x+p_1, y+p_2) : (x,y)\in (0,1)^2\}$.
It follows from \eqref{Lbound} that 
\[
\int_{\partial Q^p} e^\nu_{\e,\lambda}(U_k)  \le 3\calE_N^\lambda\quad\mbox{ for every $k\in \mathbb N$ and
$p\in \Z^2$}.
\]
We can thus apply  Lemma \ref{Qext} on every square  $Q^p$ to find that
if $\e$ is small enough, then
$n(p,k) := \deg(\frac {\phi_k}{|\phi_k|}; \partial Q^p)$ is well-defined, and
\[
\int_{Q^p} e^\nu_{\e,\lambda}(U_k) \ge \calE_{n(p,k)}^\lambda - C \e.
\]
Since $\calE^\lambda_m \ge \min\{1,\lambda\}|m| $ for all $m$, we can fix $\e_0 = \e_0(\lambda,N)$ so small that  $\calE^\lambda_m - C \e \ge \calE^\lambda_m(1-\frac 14 \delta^\lambda_N)$ for all nonzero $m$ and all $\e<\e_0$. Then
\[
\int_{\R^2} e^\nu_{\e,\lambda}(U^k) 
\ \ge \ 
\sum_{ p : n(p,k)\ne 0} \calE_{n(p,k)}^\lambda - C\e
\ \ge \ 
(1-\frac 14 \delta^\lambda_N)\sum_{ p : n(p,k)\ne 0} \calE_{n(p,k)}^\lambda.
\]
Also, as noted in Section 3, $\omega(U_k)$ is integrable, so by Lemma \ref{lemma2}, the
definition of  $H_N$, and the additivity of degree,
\[
N = 
 \lim_{t\to \infty} \int_{ \cup_{|p|<t} Q^p} \omega
=
 \lim_{t\to \infty} \deg\Big( \frac {\phi_k}{|\phi_k|} ; \partial( \cup_{|p|<t} Q^p) \Big)
=
 \ \sum_{p\in \mathbb Z^2} n(p,k).
\]
Thus the definition of $\delta^\lambda_N$ implies that if at least two $n(p,k)$
are nonzero, then
\[
\int_{\R^2} e^\nu_{\e,\lambda}(U_k) \ge 
(1 - \frac 14 \delta^\lambda_N)(1+\delta^\lambda_N)\calE^\lambda_N
\ = \ (1 + \frac 34 \delta^\lambda_N - \frac 14(\delta^\lambda_N)^2) \calE^\lambda_N
\ge ( 1+ \frac 12 \delta^\lambda_N) \calE^\lambda_N
\]
for all $k$,
since we assumed that $\delta^\lambda_N \le 1$. But this is not the case for any $k$, by our choice of the sequence $U_k$.
We conclude for every $k$, 
\be
\mbox{there exists $p_0=p_0(k)\in \mathbb Z^2$ such that $n(p_0, k) = N$, $n(p,k)= 0$ for $p\ne p_0$.}
\label{p0}\ee
Replacing (again) $U_k$ by a suitable translation, we may assume that $p_0 = (0,0)$ for every
$k$.

{\bf 3}. The remainder of the existence proof is now standard,
and very similar points are treated in detail in Rivi\`ere \cite{Riviere02}, so we summarize the arguments only briefly.
First,
 if we impose the Coulomb gauge condition $\nabla\cdot A_k = 0$ for every $k$, then the
uniform energy bounds imply that the sequence $U_k$ is weakly precompact in
$(\dot H^1\cap H^1_{loc})\times (\dot H^1\cap H^1_{loc})$. 
We can thus extract a subsequence converging weakly to a limit $U = (\phi, A)$,
and standard lower semicontinuity arguments imply that
\be\label{liminf}
\int_{\R^2} e^\nu_{\e,\lambda}(U) \leq \lim \inf_{k} E^\nu_{\e,\lambda}(U_{k})=\calE_{N}^{\lambda}.
\ee
Next, using \eqref{p0} and \eqref{Lbound}
we can check that $n(p) = \deg(\frac \phi{|\phi|}, \partial Q^p)$ is well-defined and
that $n(p_0)= N$ and that $n(p) = 0$ if $p\ne p_0=(0,0)$. It follows
that $U\in H_N$ and hence that $U$ is an energy-minimizer in $H_N$.

{\bf 4}. Finally, 
since 
\[
\min\{ 1,\lambda\} |n|\pi \le \calE^\lambda_n \le \max\{ 1,\lambda\}\, |n|\pi
\]
it is easy  check that \eqref{A.hyp} is satisfied for $N=1$ as long as $\frac 15 <\lambda < 5$.
For example, if $1<\lambda<5$, then
$\calE^\lambda_1 = \calE^\lambda_{-1} \le \lambda\pi < 5\pi$. 
 Now consider nonzero integers $n_1,\ldots, n_i$  such that $n_1+\ldots +n_i = 1$, with at least two nonzero $n_i$.
If $|n_j|=1$ for any $j$, it is clear that $\sum \calE^\lambda_{n_j}= \sum \calE^\lambda_{|n_j|} > \calE^\lambda_1$,
and if $|n_j|\ge 2$ for all $j$, then
\[
\sum \calE^\lambda_{n_j} \ge \pi \sum |n_j| \ge 5 \pi,
\]
since $\sum |n_j|$ is odd and must be greater than $3$. The case $\frac 1 5<\lambda<1$ is similar.
\end{proof}

\section{abelian Higgs model: energy estimates in normal coordinates}\label{ahm}

In this section we consider the abelian Higgs model in the coordinate system introduced
in Section \ref{cov}. 

In particular, recall the map $\psi$ defined in \eqref{psi.def}, built around a timelike 
minimal surface parametrized by an embedding $H:(-T,T)\times S^1\to \R^{1+3}$ as described in Section \ref{cov}.
Given $T_0<T$, we henceforth restrict the domain of $\psi$
to a set of the form $(-T_1,T_1)\times S^1\times B_\nu(\rho_0)$.
We do this in such a way that
\be
 T_0<T_1 < T, \quad\quad
\mbox{$\psi$ is injective  
on $(-T_{1}, T_{1}) \times S^{1} \times B_{\nu}(\rho_{0})$}
\label{T1aa}\ee
and
\be
\psi^0(-T_1, y^1, \yn) < - T_0 , \quad
\psi^0(T_1, y^1, \yn)  > T_0 , \quad
\mbox{ for all }|\yn|\le \rho_0, y^1\in S^1,
\label{T1}\ee
where $\psi^0$ denotes the $0$th component of $\psi$,  corresponding to the $t$ variable.
\newline\indent
Given $T_0 <T$, and having fixed $T_1 \in (T_0,T)$ and $\rho_0$ as above, 
we will write
\be
\calN := \psi \left( (-T_1,T_1)\times S^1\times B_\nu(\rho_0) \right) \cap
\left( (-T_0,T_0)\times \R^3 \right).
\label{calN.def}\ee

We will write $(g_{\a\b})$ to denote the metric tensor written in the
normal coordinate system
\[
g_{\a\b} := \eta_{\gamma\delta}\  \partial_\a \psi^\gamma \  \partial_\b \psi^\delta,\qquad\quad
(\eta_{\a\b}) := \mbox{diag}(-1,1,1,1)
\]
and we also use the notation
\be
\label{gmetric}
(g^{\a\b}) :=(g_{\a\b})^{-1},\qquad
g := \det(g_{\a\b}).
\ee  
Thus, in the normal coordinate system the abelian Higgs model takes the form
\begin{align*}
-\frac 1 {\sqrt{-g}} D_{\a}(g^{\a\b}D_{\b}\phi \sqrt{-g})+\frac {\lambda} {2\e^{2}} (\abs{\phi}^{2}-1)\phi=0,
\\
-\e^{2}\frac 1{\sqrt{-g}} \partial_{\a}(F^{\a\b}\sqrt{-g})-   g^{\b\a}\ip{i\phi, D_\a\phi}
=0.
\end{align*}
Here $\alpha,\beta$ run from $0$ to $3$, and we raise and lower indices with  $(g^{\a\b})$ and
$(g_{\a\b})$ respectively.
We  find it useful to write the above system as
\begin{align}
-D_{\a}(g^{\a\b}D_{\b}\phi)-b\cdot D\phi+V'_{\e}(\phi)=0 \label{ahm1g},\\
-\e^{2}\left(\partial_{\a}F^{\a\b} + g^{\b\nu}b^\mu F_{\mu\nu}\right)
-g^{\beta\alpha} \ip{i\phi, D_\alpha \phi} 
=0\label{ahm2g},
\end{align}
where
\be
b^{\b}=\frac{\partial_{\a}\g}{\g}g^{\a\b},\qquad\qquad V_{\e}(\phi)=\frac {\lambda} {8\e^{2}} (\abs{\phi}^{2}-1)^2.
\label{b.def}\ee

\subsection{properties of the metric}

We will need a number of properties of the metric $(g_{\a\b})$.
These are mostly well-known and can be found in the proof of  \cite[Prop.1]{Jerrard09} for example.
First, 
\be
(g_{\a\b})(\yt, \yn) = 
\left(
\begin{array}{ll}
(\gamma_{ab})(\yt) &0\\
0& \mbox{Id}
\end{array}\right) +
\left(
\begin{array}{ll}
O(|\yn|) &O(|\yn|)\\
O(|\yn|)& O(|\yn|^2)
\end{array}\right) 
\label{glowerab}\ee
(in block $2\times 2$ form), where $(\gamma_{ab})$ was introduced in
Section \ref{S:n-g} and satisfies
\eqref{conformal}, \eqref{gamma3}.
Hence
\be
(g^{\a\b})(\yt, \yn) = 
\left(
\begin{array}{ll}
(\gamma^{ab})(\yt) &0\\
0& \mbox{Id}
\end{array}\right) +
\left(
\begin{array}{ll}
O(|\yn|) &O(|\yn|)\\
O(|\yn|)& O(|\yn|^2)
\end{array}\right).
\label{gupperab}\ee
In  \cite[Prop.1]{Jerrard09} it is further shown that
\be
(\partial_0 g^{\a\b})(\yt, \yn) = 
\left(
\begin{array}{ll}
O(1) &O(|\yn|)\\
O(|\yn|)& O(|\yn|^2)
\end{array}\right).
\label{dtgupperab}\ee
and that the vector field $b$ defined in \eqref{b.def} satisfies
\be
|b^\nu| := \sqrt{(b^2)^2+(b^3)^2} \le C|\yn|, \qquad\qquad |b^\tau| :=\sqrt{(b^0)^2+(b^1)^2}\le C.
\label{g3}\ee
All the above estimates are uniform in $(-T_{1}, T_{1}) \times S^{1} \times B_{\nu}(\rho_{0})$.

We remark that the estimate $|b^\nu|\le C|\yn|$ 
follows from the condition that $\Gamma$ is a minimal surface, and 
it is the only place in our argument that we directly invoke this assumption.

\subsection{energy}

The natural energy for \eqref{ahm1g}-\eqref{ahm2g} is obtained from the
stress-energy tensor
\begin{align*}
T_{\alpha\beta}&=\frac 12 g_{\alpha\beta}\L -\frac {\partial\L} {\partial {g^{\a\b}}}\\
&=\frac 12 g_{\alpha\beta}\L -\frac 12 \ip{ D_{\a}\phi,D_{\b}\phi} -\frac {\e^{2}}{2} g^{\mu\nu} F_{\a\mu} F_{\b\nu}.
\end{align*}
We will state estimates in terms of the energy density $e_{\e,\lambda}(U) := 2T^0_0$.
Explicitly,  
\begin{align*}
T^{0}_{0}&=
\frac 12 \L -\frac 12 g^{0 \ga} \ip{ D_{\ga}\phi ,D_{0}\phi } -\frac {\e^2}2F^{0\nu}F_{0\nu}
\\
&=
\left[\frac 14 g^{\mu\nu}\ip{ D_{\mu}\phi ,D_{\nu}\phi }  -\frac 12 g^{0 \ga} \ip{ D_{\ga}\phi ,D_{0}\phi } \right]
+ \frac {\e^2}8  \left[ F_{\a\b}F^{\a\b} - 4 F^{0\b}F_{0\b}
\right]
+\frac{\lambda}{16\e^2}(|\phi|^2-1)^2.
\end{align*}
We define $(a^{\a\b})$ so that $a^{\a\b} \xi_\a\xi_\b  = g^{\a\b}\xi_a\xi_b - g^{0\b} \xi_0\xi_b$,
which leads to
\be
a^{00}=-g^{00}, \ \  \quad\quad a^{i0} = a^{0j} = 0, \quad\quad\ \ a^{ij}= g^{ij}, \quad i,j = 1,\ldots, 3.
\label{aab.def}\ee
With this notation,
\be\label{eeplambda}
e_{\e,\lambda}(U) \ : = \ 2T^{0}_{0}=
\frac 12 a^{\mu\nu} \ip{ D_{\mu}\phi ,D_{\nu}\phi } 
+ \frac {\e^2}4  \left( F_{\a\b}F^{\a\b} - 4 F^{0 \b}F_{0 \b }
\right)
+\frac{\lambda}{8\e^2}(|\phi|^2-1)^2.
\ee
We also remark that
\be
T^{j}_{0} = 
-\frac 12 g^{j\ga} \ip{ D_{\ga}\phi ,D_{0}\phi } -\frac {\e^2}2F^{j\nu}F_{ 0 \nu},
\label{Tj0}\ee
where we remind the reader that repeated latin indices are summed from $1$ to $3$.
We will need the following
\begin{lem}
There exist constants $0<c\le C$ such that
for every $U= (\phi, A)$, 
\begin{align}
(1- C|\yn|^2)e_{\e,\lambda}^\nu(U) + c \Big[ |D_\tau \phi|^2+  \e^2|F_\tau|^2  \Big]
&\le \  e_{\e,\lambda}(U) 
\nonumber\\ 
&\le \  
(1+C|\yn|^2)e_{\e,\lambda}^\nu(U) + C \Big[ |D_\tau \phi|^2 + \e^2 |F_\tau|^2 \Big]
\label{pos2}\end{align}
uniformly for  $y = (\yt, \yn)\in (-T_1,T_1)\times S^1 \times B_\nu(\rho_0)$,
where
\be
\abs{F_\tau}^2
=
\sum_{0\leq \a\leq 3,\ \,  0\leq \b \leq 1}\abs{F_{\a\b}}^2,
\qquad \qquad \abs{F_\nu}^2= \abs{F_{2 3}}^2,
\label{Ftaunu}\ee
\be
|D_\tau \phi|^2 = |D_0\phi|^2 + |D_1\phi|^2, \qquad\qquad
|D_\nu\phi|^2 = |D_2\phi|^2 + |D_3\phi|^2, 
\label{Dtaunu}\ee
and 
\be
e_{\e,\lambda}^\nu(U) := 
\frac 12 |D_\nu\phi|^2 + \frac {\e^2}2|F_\nu|^2 +  V_\e(\phi)
\label{enuepla}\ee
\label{L:pos2}\end{lem}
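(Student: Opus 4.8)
The plan is to unwind both densities — $e_{\e,\lambda}(U)$ via formula \eqref{eeplambda} and $e^\nu_{\e,\lambda}(U)$ via \eqref{enuepla} — and compare them term by term, using the structure of the metric recorded in \eqref{gupperab} together with the conformality/nondegeneracy of $(\gamma_{ab})$ from \eqref{conformal}, \eqref{gamma3}. The potential contributions agree exactly, since $V_\e(\phi)=\frac{\lambda}{8\e^2}(|\phi|^2-1)^2$ enters both densities with coefficient $1$, so it suffices to estimate the covariant-derivative term $\frac12 a^{\mu\nu}\ip{D_\mu\phi,D_\nu\phi}$ and the curvature term $\frac{\e^2}{4}(F_{\a\b}F^{\a\b}-4F^{0\b}F_{0\b})$ separately, showing each splits as (correct $\yn$-direction part, with coefficient $1+O(|\yn|^2)$) plus (a term two-sidedly comparable to the corresponding tangential quantity), with the tangential–normal cross terms absorbed by Young's inequality. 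The whole argument is local near $\yn=0$, and all constants are uniform on $(-T_1,T_1)\times S^1\times B_\nu(\rho_0)$ by the uniformity of \eqref{gupperab}, \eqref{gamma3}.

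For the derivative term: by \eqref{aab.def}, $(a^{\a\b})$ is block-diagonal with $a^{00}=-g^{00}$, $a^{ij}=g^{ij}$. Since the tangential $2\times2$ block of $(g^{\a\b})$ is $(\gamma^{ab})+O(|\yn|)$ and $(\gamma^{ab})=\mathrm{diag}(\gamma^{00},\gamma^{11})$ with $-\gamma^{00}=\gamma^{11}=1/\gamma_{11}$ bounded above and below by positive constants (by \eqref{gamma3}), the coefficients $a^{00},a^{11}$ are comparable to $1$ for $|\yn|$ small, so $a^{00}|D_0\phi|^2+a^{11}|D_1\phi|^2$ is comparable to $|D_\tau\phi|^2$. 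The purely normal part $\sum_{i,j\in\{2,3\}}a^{ij}\ip{D_i\phi,D_j\phi}$ equals $(1+O(|\yn|^2))|D_\nu\phi|^2$ since $g^{ij}=\delta_{ij}+O(|\yn|^2)$ there. The remaining tangential–normal cross terms $2a^{1j}\ip{D_1\phi,D_j\phi}$ have coefficients $a^{1j}=O(|\yn|)$, and Young's inequality bounds them by $\epsilon|D_1\phi|^2+C_\epsilon|\yn|^2|D_\nu\phi|^2$ for any fixed $\epsilon>0$; choosing $\epsilon$ small (relative to the ellipticity constant of the tangential block) makes the $|D_1\phi|^2$ loss harmless on the lower-bound side, while $C_\epsilon|\yn|^2|D_\nu\phi|^2$ feeds the $C|\yn|^2 e^\nu_{\e,\lambda}$ correction.

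For the curvature term, the essential computation is at $\yn=0$, where $(g^{\a\b})=\mathrm{diag}(-1/\mu,1/\mu,1,1)$ with $\mu:=\gamma_{11}$; a direct calculation gives
\[
\frac{\e^2}4\big(F_{\a\b}F^{\a\b}-4F^{0\b}F_{0\b}\big)\Big|_{\yn=0}
=\frac{\e^2}2|F_\nu|^2+\frac{\e^2}2\Big(\tfrac1{\mu^2}F_{01}^2+\tfrac1\mu(F_{02}^2+F_{03}^2+F_{12}^2+F_{13}^2)\Big),
\]
i.e. exactly $\frac{\e^2}2|F_\nu|^2$ plus a positive-definite form in $F_\tau$ comparable to $\e^2|F_\tau|^2$, with no $F_\tau$–$F_\nu$ cross terms (the form is diagonal because $g$ is diagonal there). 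Feeding in \eqref{gupperab}, the $|F_\nu|^2=|F_{23}|^2$ coefficient changes only by $O(|\yn|^2)$, the $F_\tau$-only coefficients change by $O(|\yn|)$ (remaining positive-definite for $|\yn|$ small), and the induced $F_\tau$–$F_\nu$ cross terms have coefficients $O(|\yn|)\e^2$, handled by Young as $\epsilon\e^2|F_\tau|^2+C|\yn|^2\e^2|F_\nu|^2$. Combining the two estimates, I would group the $\tfrac12(1+O(|\yn|^2))|D_\nu\phi|^2$, $\tfrac{\e^2}2(1+O(|\yn|^2))|F_\nu|^2$ and $V_\e(\phi)$ pieces into $(1\pm C|\yn|^2)\,e^\nu_{\e,\lambda}(U)$ (using $1-C|\yn|^2\le1\le1+C|\yn|^2$ to absorb $V_\e$ correctly on each side), and collect all tangential contributions — which are two-sidedly comparable to $|D_\tau\phi|^2+\e^2|F_\tau|^2$ once the $\epsilon$'s are fixed small — to obtain \eqref{pos2}.

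The only genuinely non-routine step is the explicit evaluation of the electromagnetic density $\frac{\e^2}4(F_{\a\b}F^{\a\b}-4F^{0\b}F_{0\b})$ at $\yn=0$ and the observation that it is diagonal there with $|F_\nu|^2$-coefficient precisely $\frac{\e^2}2$; everything else is bookkeeping with \eqref{gupperab}, \eqref{gamma3} and Young's inequality. One should also be prepared to shrink $\rho_0$ (if it was not already small enough) so that the positive-definite quadratic forms above do not degenerate on $B_\nu(\rho_0)$.
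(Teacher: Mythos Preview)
Your proposal is correct and is precisely the ``routine computation'' the paper invokes: the paper's own proof consists of the single sentence that the inequalities follow from \eqref{gupperab}, \eqref{conformal}, \eqref{gamma3}, and you have supplied exactly those details (block decomposition of $a^{\a\b}$, explicit evaluation of the curvature density at $\yn=0$, and Young's inequality for the $O(|\yn|)$ cross terms). Your explicit formula for $\frac{\e^2}4(F_{\a\b}F^{\a\b}-4F^{0\b}F_{0\b})$ at $\yn=0$ checks out.
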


\begin{proof}
The pointwise inequalities \eqref{pos2} follow from \eqref{gupperab}, \eqref{conformal}, \eqref{gamma3}
by routine computations.

\end{proof}

\subsection{A differential energy inequality}

We next prove

\begin{lem}\label{energy}
Assume that $U$ is a smooth solution of \eqref{ahm1g}, \eqref{ahm2g} on
$(-T_1,T_1)\times S^1\times B_\nu(\rho_0)$. Then
there exists $C>0$ such that 
\be
\partial_{0}e_{\e,\lambda}(U)\leq C\left(\abs{D_{\tau}\phi}^{2}+\e^2\abs{F_{\tau}}^{2}+\abs{y^{\nu}}^{2} \, e^\nu_{\e,\lambda}(U) 
\right)- 2  \,\partial_i \,T^i_0(U)
\label{diffineq}\ee
pointwise.
\end{lem}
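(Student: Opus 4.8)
The plan is to compute $\partial_0 e_{\e,\lambda}(U) = 2\partial_0 T^0_0$ directly from the conservation law for the stress-energy tensor and then absorb all the error terms coming from the non-flatness of the metric $(g_{\a\b})$ using the estimates of Section~\ref{ahm}. The starting point is the identity $\partial_\a(\sqrt{-g}\, T^\a_\b) = \sqrt{-g}\, \Gamma$-type source terms, or equivalently the divergence identity for $T^\a_\b$ that follows from the Euler--Lagrange equations \eqref{ahm1g}, \eqref{ahm2g}. Taking $\b = 0$ and isolating the $\partial_0 T^0_0$ term, I would write
\[
\partial_0 T^0_0 = -\partial_i T^i_0 + (\text{source terms}),
\]
where the source terms arise because the metric in normal coordinates is not translation-invariant in $y^0$ and not flat. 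Concretely these involve $\partial_0 g^{\a\b}$, $\partial_i g^{\a\b}$, the vector field $b$ from \eqref{b.def}, and the off-diagonal/deviation pieces of $(g^{\a\b})$ recorded in \eqref{glowerab}--\eqref{g3}.

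The key steps, in order, are: (1) derive the divergence identity for $T^\a_0$ from \eqref{ahm1g}, \eqref{ahm2g} by the standard computation (multiply the field equation by $D_0\phi$, the curvature equation by $F_{0\b}$, integrate by parts, and collect terms); (2) identify precisely which terms in the resulting source fail to be pure divergences $\partial_i(\cdots)$ — these are exactly the terms that are quadratic in the fields with coefficients built from $\partial g$, $b$, and the metric-deviation; (3) bound each such term. For the bounds I would split the quadratic field expression into a tangential part (involving $D_\tau\phi$, $F_\tau$) and a normal part (involving $D_\nu\phi$, $F_\nu$, $V_\e(\phi)$, i.e.\ $e^\nu_{\e,\lambda}(U)$). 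Terms multiplied by the tangential fields are bounded by $C(|D_\tau\phi|^2 + \e^2|F_\tau|^2)$ outright, using that the relevant metric coefficients are $O(1)$. Terms multiplied only by normal fields must carry an extra factor of $|\yn|^2$: this is where \eqref{g3}'s estimate $|b^\nu|\le C|\yn|$, together with the $O(|\yn|^2)$ entries in \eqref{glowerab}, \eqref{dtgupperab}, is essential — a term like $b^\nu F_{\mu\nu}$ paired with a normal field gains one power of $|\yn|$, and either a second factor of $b^\nu$ or a Cauchy--Schwarz against another normal field supplies the second power; purely normal quadratic terms with $O(|\yn|^2)$ metric deviation are directly $O(|\yn|^2 e^\nu_{\e,\lambda}(U))$ by Lemma~\ref{L:pos2}. (4) Collect the genuine divergence terms into $-2\partial_i T^i_0$, using the explicit formula \eqref{Tj0} for $T^j_0$, and read off \eqref{diffineq}.

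I expect the main obstacle to be step (3) for the mixed terms: one must check carefully that every source term that is \emph{not} of the form $\partial_i(\cdots)$ and is \emph{not} already controlled by $C(|D_\tau\phi|^2+\e^2|F_\tau|^2)$ genuinely comes with two powers of $|\yn|$ rather than one. A single uncancelled power of $|\yn|$ would be fatal for the later weighted estimates. The saving features are precisely \eqref{dtgupperab} (the dangerous $\partial_0 g^{\a\b}$ is only $O(1)$ in the $\tau\tau$ block, which is always hit by a tangential field, and is $O(|\yn|^2)$ in the $\nu\nu$ block) and \eqref{g3} (the minimal-surface condition forces $b^\nu = O(|\yn|)$); the proof must be organized so that each normal-field-only term visibly uses one of these. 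A secondary bookkeeping point is keeping track of the $\e^2$ weights on the curvature terms so that the $\e^2|F_\tau|^2$ and $\e^2|F_\nu|^2$ factors land with the correct powers, but this is routine. Everything else — the integration by parts producing the $\partial_i T^i_0$ term, and the pointwise comparison of $e_{\e,\lambda}$ with $e^\nu_{\e,\lambda}$ plus tangential corrections — is already packaged in Lemma~\ref{L:pos2} and the formulas \eqref{eeplambda}, \eqref{Tj0}.
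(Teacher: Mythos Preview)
Your proposal is correct and follows essentially the same approach as the paper: multiply \eqref{ahm1g} by $D_0\phi$, use the commutation relation $[D_\a,D_\b]=iF_{\b\a}$, substitute \eqref{ahm2g} to handle the resulting $F_{0\a}g^{\a\b}\ip{D_\b\phi,i\phi}$ term, manipulate the curvature contribution into a $\partial_0$ piece plus a divergence plus remainders, and then bound each remainder using \eqref{gupperab}, \eqref{dtgupperab}, \eqref{g3} exactly as you describe in step (3). One small caveat: your opening line frames this as starting from the exact conservation law $\partial_\a(\sqrt{-g}\,T^\a_\b)=0$, but the paper explicitly remarks that it does \emph{not} seem easy to derive \eqref{diffineq} from that identity directly; the direct computation from the field equations (your step (1)) is the route that works, and is what you actually propose.
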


As is well-known, the tensor  $T^\a_\b$ satisfies an {\em exact} conservation law
$\partial_\a( T^\a_\b\sqrt{-g}) = 0$ for $\alpha = 0,\ldots, 3$. However,  \eqref{diffineq} is more useful for our purposes. Surprisingly,  it does not seem to be easy to derive \eqref{diffineq}
directly from the exact conservation law.

\begin{proof}
We  take the inner product of  \eqref{ahm1g} 
with $D_0\phi$ to find
\[
-\ip{D_{\a}(g^{\a\b}D_{\b}\phi), D_0\phi} -\ip{b\cdot D\phi, D_0\phi} +\ip{V'_{\e}(\phi), D_0\phi}=0.
\]
Note that $\ip{V'_{\e}(\phi), D_0\phi} = \partial_0 V_\e(\phi)$.
Also, using the commutation relation
\[ 
[D_{\a},D_{\b}]=iF_{\b\a}.
\]
we compute
\begin{align*}
-\ip{D_{\a}(g^{\a\b}D_{\b}\phi), D_0\phi}
&=
-\partial_\alpha \ip{g^{\alpha\beta}D_\beta\phi, D_0\phi}
+ 
\ip{g^{\a\b}D_{\b}\phi , D_{\a} D_0\phi}
\nonumber \\
&=
-\partial_\alpha \ip{g^{\alpha\beta}D_\beta\phi, D_0\phi}
+ 
\ip{g^{\a\b}D_{\b}\phi , D_{0} D_\a\phi}
+
\ip{g^{\a\b}D_{\b}\phi , [D_\alpha, D_0]\phi}
\nonumber \\
&=
-\partial_\alpha \ip{g^{\alpha\beta}D_\beta\phi, D_0\phi}
+ 
\frac 12 \partial_0\ip{g^{\a\b}D_{\b}\phi , D_\a\phi}\\
&\quad
-
\frac 12 (\partial_0g^{\a\b})\ip{D_{\b}\phi , D_\a\phi}
+
F_{0\alpha} \ip{g^{\a\b}D_{\b}\phi ,i \phi}.
\label{nrg1}\end{align*}
Since $-g^{0\b}\xi_\b \xi_0 + \frac 12 g^{\a\b}\xi_a\xi_\b =  \frac 12 a^{\a\b}\xi_\a \xi_\b$,
by collecting terms we conclude that
\be
\begin{split}
\partial_0\left(\frac 12 a^{\a\b} \ip{ D_\a \phi, D_\b\phi}
+ V_\e(\phi)
\right) \, &= \, \ip{b\cdot D\phi, D_0\phi }
+ \partial_i \ip{g^{i\b}D_\b\phi, D_0\phi } \\
&\quad
+
\frac 12 (\partial_0g^{\a\b})  \ip{D_{\b}\phi , D_\a\phi}
-F_{0\a}g^{\a\b} \ip{D_\b\phi, i\phi}.
\label{nrg1a}
\end{split}\ee
We now rewrite the last term on the right-hand side. First, using the equation \eqref{ahm2g},
\begin{align}
F_{0\alpha}g^{\a\b} \ip{D_{\b}\phi ,i \phi}
&=
- \e^2
F_{0\a}\left(\partial_{\b} F^{\b\a} + g^{\a\gamma'}b^{\gamma} F_{\gamma\gamma'}\right).
\label{nrg2}
\end{align}
We will leave the second term as it is. As for the first term, note that
\begin{align}
F_{0\a}\partial_{\b} F^{\b\a} 
&=
\partial_\b(F_{0\a}F^{\b\a} ) 
- 
\partial_\b F_{0\a} F^{\b\a}\nonumber
\\
&=
\partial_\b(F_{0\a}F^{\b\a} ) 
+ (  \partial_\a F_{\b 0} + \partial_0 F_{\a\b} ) F^{\b\a}\nonumber
\\
&=
\partial_\b(F_{0\a}F^{\b\a} ) 
+ \partial_\a (F_{\b 0} F^{\b\a}) - F_{\b 0} \partial_\a F^{\b\a} 
- 
 \partial_0 F_{\a\b} F^{\a\b} \nonumber
 \\
 &=
2\partial_\b(F_{0\a}F^{\b\a} ) 
- F_{\b 0} \partial_\a F^{\b\a} 
- 
 \partial_0 F_{\a\b} F^{\a\b}
\nonumber \\
&=2 \partial_\b(F_{0\a}F^{\b\a} ) 
- 
F_{\b 0} \partial_\a F^{\b\a} 
-
\frac 12 \partial_0 (F_{\a\b} F^{\a\b})
+ \frac 12 
 \partial_0( g^{\a\gamma} g^{\b\gamma'}) F_{\a\b} F_{\gamma\gamma'}\nonumber.
\end{align}
Hence
\be\label{nrg3}
F_{0\a}\partial_{\b} F^{\b\a} = \partial_\b(F_{0\a}F^{\b\a} ) 
 -
\frac 14 \partial_0 (F_{\a\b} F^{\a\b})
+ \frac 14 
 \partial_0( g^{\a\gamma} g^{\b\gamma'}) F_{\a\b} F_{\gamma\gamma'}.
\ee
We combine  \eqref{nrg1a}-\eqref{nrg3} and collect all terms involving 
$\partial_0$ on the left -hand side, to find that
\begin{align}
&\partial_0\left(\frac 12 a^{\a\b} \ip{ D_\a \phi, D_\b\phi}
+\frac{\e^2}{4}( F_{\a\b}F^{\a\b} - 4F_{0\a}F^{0\a})
+ V_\e(\phi)
\right) 
\nonumber \\
&\hspace{4em} 
= \,  
\partial_i\left(  g^{i\b} \ip{D_\b\phi, D_0\phi }  + \e^2 F_{0 \a} F^{i \a}\right)
+\ip{ b\cdot D\phi, D_0\phi }
+\e^2 F_{0\a} g^{\a\gamma'}b^{\gamma}F_{\gamma\gamma'} \label{nrg4}
\\
&\hspace{8em}
+\frac 12 (\partial_0g^{\a\b}) \ip{ D_{\b}\phi , D_\a\phi }
+ \frac {\e^2}4 
 \partial_0( g^{\a\gamma} g^{\b\gamma'}) F_{\a\b} F_{\gamma\gamma'}.
 \nonumber
\end{align}
Note that the left-hand side is just $\partial_0 e_{\e,\lambda}(U)$, and the first term 
on the right-hand side is $-2 \, \partial_i \ T^i_0(\phi, A)$. So we just need to estimate the
other terms on the right-hand side. First,
by \eqref{dtgupperab} and \eqref{g3}, which we recall is essentially the assumption that $\Gamma$ is minimal,
\[
\ip{ b\cdot D\phi, D_{0}\phi }+\frac 12 (\partial_{0}g^{\a\b}) \ip{D_{\a}\phi,D_{\b}\phi}
\leq C\left(\abs{D_{\tau}\phi}^{2}+\abs{y^{\nu}}^{2}\abs{D_{\nu}\phi}^{2} \right).
\]
Next,
\begin{align*}
 \frac {\e^2}4 \partial_{0}(g^{\a\gamma}g^{\b\gamma'})F_{\a\b}F_{\gamma\gamma'}&=  \frac {\e^2}2 (\partial_{0}g^{\a\gamma})g^{\b\gamma'}F_{\a\b}F_{\gamma\gamma'}\\
&\leq \frac {\e^2}2 \norm{g^{\b\gamma'}}_{L^{\infty}}\abs{ (\partial_{0} g^{\a\gamma}) F_{\a \b}F_{\gamma \gamma'}}\\
&\leq C{\e^2}\left(\abs{F_{\tau}}^{2}+\abs{y^{\nu}}^{2}\abs{F_{\nu}}^{2} \right),
\end{align*}
using \eqref{gupperab}-\eqref{dtgupperab}.
Finally, from \eqref{gupperab} and \eqref{g3} we similarly estimate
\begin{align*}
\e^2 F_{0\a} g^{\a\gamma'}b^{\gamma}F_{\gamma\gamma'}
&\leq C{\e^2}\left(\abs{F_{\tau}}^{2}+\abs{y^{\nu}}^{2}\abs{F_{\nu}}^{2} \right).
\end{align*}
\end{proof}

\subsection{weighted energy estimate}

In this subsection we establish an estimate that plays a key role
in the proof of Theorem \ref{mainthm}.
We first introduce some notation.

Given a configuration $U$ on $S^1\times B_\nu(R)$ for some $R>0$,
if $U$ is a configuration on $S^1\times B_\nu(R)$ for some $R>0$,
then we will  use the notation
\be
\D_m(U;R) := \int_{y^1\in S^1} | \D^\nu_m(U(y^1) ; R) | \ dy^1
\label{D.def}\ee
for $m\in \Z$, where here $U(y^1)$ denotes the function 
$\yn\mapsto U(y^1, \yn)$. We recall that  $\D^\nu_m$ is defined in \eqref{defect}.

The main result of this section is 

\begin{prop}\label{prop3}
Assume that
$U$ is a smooth solution of \eqref{ahm1g}-\eqref{ahm2g}
on $(-T_1,T_1)\times S^1\times B_\nu(\rho_0)$ and
that $m$ is a nonzero integer such that \eqref{monotone} is satisfied.
Then there exist positive constants $c_*,C, \kappa_2$ and $\rho_1\in (0,\rho_0/2]$, independent
of $U$ and of $\e\in (0,1]$, such that the following
hold:
if we define
\begin{align}
\zeta_1(s) &:= \left.\int_{S^1} \left( \int_{B_\nu(\rho_1-c_*s)} (1+\kappa_2|\yn|^2)e_{\e,\lambda}(U) d\yn  - \calE^m_\lambda \right) dy^1\right|_{y^0=s} 
\label{pc1}\\
\zeta_2(s) &:= D_m(U(s ,\cdot); \frac 12 \rho_1) 
\label{pc2} 
\\
\zeta_{3} (s)&:=
\left.\int_{S^1}\int_{B_\nu(\rho_1-c_*s)} \Big(\abs{D_{\tau} v}^{2}+\e^{2}\abs{F_{\tau}}^{2}+\abs{y^{\nu}}^{2}e_{\e,\lambda}^{\nu}(U) \Big)
 d\yn \ dy^1\ \ \right|_{y^0 = s}  
\label{pc3}
\end{align}
(where the notation $|D_\tau\phi|, |F_\tau|$ etc is introduced in \eqref{Ftaunu}-\eqref{enuepla})
then 
\be
\zeta_i(s) \le C\max\{ \zeta_1(0),  \zeta_2(0), \e^2 \},\ \mbox{ for }\ i = 1,2,3, \quad 0<s <s_{max} := \min
\{ T_1, \rho_1 /2c_* \}.
\label{pa}\ee
\end{prop}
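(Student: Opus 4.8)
The plan is to set up a Gr\"onwall-type argument for the three quantities $\zeta_1,\zeta_2,\zeta_3$ simultaneously. The key mechanism is the shrinking spatial domain $B_\nu(\rho_1 - c_* s)$: choosing the speed $c_*$ large enough will allow us to discard the boundary flux terms that arise when we differentiate $\zeta_1$ in $s$. Concretely, I would first differentiate $\zeta_1$ using the differential energy inequality \eqref{diffineq} from Lemma \ref{energy}. Integrating $\partial_0 e_{\e,\lambda}(U)$ over $S^1\times B_\nu(\rho_1-c_*s)$, the divergence term $-2\partial_i T^i_0$ contributes only a boundary integral over $\partial B_\nu(\rho_1 - c_* s)$, while the shrinking of the domain contributes a $-c_*$ times a boundary integral of the (nonnegative, up to the $|\yn|^2$ correction) energy density. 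Using \eqref{pos2} to see that $e_{\e,\lambda}(U)$ dominates a fixed multiple of $e^\nu_{\e,\lambda}(U) + |D_\tau\phi|^2 + \e^2|F_\tau|^2$ on $B_\nu(\rho_0)$, and bounding $|T^i_0|$ pointwise by $C e_{\e,\lambda}(U)$ (from \eqref{Tj0} and Cauchy-Schwarz), we can choose $c_*$ so that the net boundary contribution is $\le 0$. What remains from $\zeta_1'(s)$ is then controlled by $C\zeta_3(s)$ plus the $\kappa_2$-weighted correction, which is itself $\le C\kappa_2 \zeta_3(s) + C\kappa_2\rho_1^2\,(\zeta_1(s)+\calE^\lambda_m)$; choosing $\rho_1$ small absorbs the bad part.

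\textbf{Controlling $\zeta_2$ and closing the loop.} Next I would estimate $\zeta_2'(s)$, or rather directly bound $\zeta_2(s) - \zeta_2(0)$, using Proposition \ref{prop2} applied on $(0,s)\times B_\nu(\tfrac12\rho_1)$ slice-by-slice in $y^1$ and integrated over $S^1$: this gives $|\D_m(U(s);\tfrac12\rho_1) - \D_m(U(0);\tfrac12\rho_1)| \le C\int_0^s \zeta_3(\sigma)\,d\sigma$, so $\zeta_2(s) \le \zeta_2(0) + C\int_0^s\zeta_3$. The crucial coupling comes from Proposition \ref{prop1}: since \eqref{monotone} holds, whenever $\D^\nu_m(U(y^1);\tfrac12\rho_1) < \kappa_1$ we get $\int_{B_\nu(\rho_1/2)} e^\nu_{\e,\lambda}(U(y^1)) \ge \calE^\lambda_m - C\e^2$. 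Integrating over $y^1\in S^1$ and comparing with the definition of $\zeta_1$ (whose integrand is $\int_{B_\nu(\rho_1-c_*s)}(1+\kappa_2|\yn|^2)e_{\e,\lambda}(U) - \calE^\lambda_m$, and $e_{\e,\lambda} \ge (1-C|\yn|^2)e^\nu_{\e,\lambda}$ plus nonnegative terms), we obtain a lower bound on $\zeta_1(s)$ of the form $\zeta_1(s) \ge -C\e^2 - C\,|\{y^1 : \D^\nu_m(U(y^1)) \ge \kappa_1\}|$, and on that bad set the defect is controlled by $\zeta_2$ via Chebyshev: $|\{\D^\nu_m \ge \kappa_1\}| \le \zeta_2(s)/\kappa_1$. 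This yields the reverse-type inequality $\zeta_2(s) \le C(\zeta_1(s) + \calE^\lambda_m - \text{(a priori energy lower bound)} + \e^2)$ — more precisely, combined with an a priori upper bound on the total energy (which is how $\zeta_1(0)$ enters), it lets us close the system.

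\textbf{The Gr\"onwall step.} Having the three inequalities
\[
\zeta_1(s) \le \zeta_1(0) + C\!\int_0^s\!\zeta_3,\quad
\zeta_2(s) \le \zeta_2(0) + C\!\int_0^s\!\zeta_3,\quad
\zeta_3(s) \le C\big(\zeta_1(s) + \zeta_2(s) + \e^2\big),
\]
(the last being essentially \eqref{pos2} together with the fact that the total conserved energy is close to $\calE^\lambda_m$ so that $\zeta_3$, a weighted slice-energy of the ``extra'' components, is controlled by how much $\zeta_1$ exceeds its minimum plus the defect), I would set $Z(s) := \zeta_1(s) + \zeta_2(s)$ and deduce $Z(s) \le Z(0) + C\e^2 s + C\int_0^s Z$, whence $Z(s) \le C(Z(0) + \e^2)e^{Cs} \le C(\zeta_1(0) + \zeta_2(0) + \e^2)$ on $0 \le s \le s_{\max}$. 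Feeding this back into $\zeta_3(s) \le C(Z(s) + \e^2)$ gives the bound for $i=3$, completing \eqref{pa}.

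\textbf{Main obstacle.} The delicate point is the third inequality, $\zeta_3 \lesssim \zeta_1 + \zeta_2 + \e^2$, which is where the ellipticity/positivity really gets used and where the structure of the abelian Higgs model matters. The quantity $\zeta_3$ measures the $L^2$ norm of the ``tangential'' derivatives $D_\tau\phi, F_\tau$ and the $|\yn|^2$-weighted normal energy on a time slice; there is no pointwise reason this should be small, and we must extract it from global/integral information. The idea is that by conservation of energy (from the stress-energy tensor, or from the initial data construction) the total energy on each slice is $\calE^\lambda_m + O(\e^2)$, while Proposition \ref{prop1} forces at least $\calE^\lambda_m - O(\e^2)$ of it to be the \emph{normal} energy $e^\nu_{\e,\lambda}$ concentrated in $B_\nu(\tfrac12\rho_1)$ wherever the defect is small; the leftover budget — which bounds exactly the tangential pieces and the energy outside the small ball — is therefore $O(\zeta_1 + \zeta_2 + \e^2)$. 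Making this rigorous requires carefully matching the Minkowski energy $e_{\e,\lambda}$ (with its indefinite cross terms $g^{0i}$) against the positive-definite $e^\nu_{\e,\lambda}$, using \eqref{pos2} and the smallness of $\rho_1$ to control the $O(|\yn|)$ errors in the metric, and tracking the fact that conformal coordinates \eqref{conformal} make the leading-order tangential metric block conformal to the identity so that the ``wrong-sign'' contributions are genuinely lower order. Getting all the constants to be independent of $\e$ — in particular making sure the $O(|\yn|^2)$ metric errors are absorbed by the fixed small radius $\rho_1$ rather than by $\e$ — is the bookkeeping heart of the argument.
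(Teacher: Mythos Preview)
Your proposal follows essentially the same route as the paper: differentiate $\zeta_1$ using Lemma~\ref{energy}, absorb the boundary flux by choosing $c_*$ large, control $\zeta_2(s)-\zeta_2(0)$ slicewise via Proposition~\ref{prop2}, derive $\zeta_3\le C(\zeta_1+\zeta_2+\e^2)$ from Proposition~\ref{prop1} plus Chebyshev on the ``bad'' $y^1$, and close with Gr\"onwall. The three displayed inequalities in your Gr\"onwall step are exactly the ones the paper establishes.

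There is, however, a real muddle in how you explain the third inequality. You repeatedly invoke ``conservation of energy'' and an ``a priori upper bound on the total energy'' to say the slice energy is $\calE^\lambda_m+O(\e^2)$; no such conservation law is available or used here (the proposition applies to an arbitrary smooth solution on the tube, with no global hypotheses). The actual mechanism is purely pointwise and built into the definition of $\zeta_1$: one chooses $\kappa_2$ (and $\rho_1$) precisely so that the strengthened version of \eqref{pos2},
\[
(1+\kappa_2|\yn|^2)\,e_{\e,\lambda}(U)\ \ge\ c\big(|D_\tau\phi|^2+\e^2|F_\tau|^2\big)+(1+|\yn|^2)\,e^\nu_{\e,\lambda}(U),
\]
holds on the tube. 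Integrating this gives directly $\zeta_1(s)\ge c\,\zeta_3(s)-\bigl(|S^1|\,\calE^\lambda_m-\int_{\{s\}\times W(s)} e^\nu_{\e,\lambda}(U)\bigr)$, and it is only this \emph{deficit of normal energy} that must be bounded by $C\zeta_2(s)+C\e^2$ via Proposition~\ref{prop1} on the good $y^1$ and Chebyshev on the bad. So the role of $\kappa_2$ is not cosmetic: it is what converts the excess weighted energy $\zeta_1$ into a lower bound for $\zeta_3$ plus the unweighted normal energy. Relatedly, in your Step~1 you say you need $\rho_1$ small to absorb a term $C\kappa_2\rho_1^2(\zeta_1+\calE^\lambda_m)$; this is not needed. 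After integrating by parts, the interior contribution from the weight is $\lesssim \kappa_2\int |\yn|\,|T^\nu_0|$, and $|\yn|\,|T^\nu_0|\le C(|D_\tau\phi|^2+\e^2|F_\tau|^2+|\yn|^2 e^\nu_{\e,\lambda})$ already lands in $\zeta_3$, so $\zeta_1'\le C\zeta_3$ without any further smallness.
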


The proof follows that of Proposition 10 in \cite{Jerrard09}.

\begin{proof}
We fix $\rho_1 \in (0, \rho_0/2]$  and $\kappa_2>0$, independent of $\e, U$, such that 
there exists $c>0$ satisfying
\be
e_{\e,\lambda}^\nu(U) + |D_\tau \phi|^2+  \e^2|F_\tau|^2 
\le \  c^{-1}  e_{\e,\lambda}(U) 
\label{rho1}\ee
and
\be
(1+\kappa_2 |\yn|^2) e_{\e,\lambda}(U) \ge c(|D_\tau \phi|^2+  \e^2|F_\tau|^2) + (1+ |\yn|^2) e^\nu_{\e,\lambda}(U)
\label{kappa2}\ee
in $(-T_1,T_1)\times S^1 \times B_\nu(2\rho_1)$. This may be done due
to Lemma \ref{L:pos2}.
It is convenient to use the notation $\zeta_0 := \max\{ \zeta_1(0), \zeta_2(0)\}$ and 
\[
W_{\nu}(s) : =B_{\nu}(\rho_{1}-c_{\ast}s),
\qquad\qquad
W(s) :=S^{1}\times W_{\nu}(s)
\]
for $c_\ast$ to be fixed below.
\newline\noindent
{\textbf{1.}}  We show
\be \label{p3step1}
\zeta_{1}(s)\leq \zeta_{0}+C\int_{0}^{s} \zeta_{3}(s')\, ds'\quad\mbox{for $s\in (0,s_{\max}]$.}
\ee
Since  $\zeta_{1}(0)\leq \zeta_{0}$, it suffices to prove that $\zeta_{1}'(s)\leq C \zeta_{3}(s)$. To that end we compute
\begin{align*}
\zeta_{1}'(s)&=\int_{{s}\times W(s)} (1+\kappa_2\abs{y^{\nu}}^{2})\, \partial_{0}e_{\e}(U) \\
&\quad -c_{\ast}\int_{\{s\}\times S^{1}\times \partial W_{\nu}(s)} (1+\kappa_2\abs{y^{\nu}}^{2})e_{\e,\lambda}(U) =I+II.
\end{align*}
By Lemma \ref{energy}, 
\begin{align*}
I&\leq 
C\int_{\{s\}\times W(s)} (1+\kappa_2 \abs{y^{\nu}}^{2})\left(\abs{D_{\tau}\phi}^{2}+\e^{2}\abs{F_{\tau}}^{2}+\abs{y^{\nu}}^{2}e^\nu_{\e,\lambda}(U)   \right)\\
&\qquad\qquad
- 2\int_{\{s\}\times W(s)} (1+\kappa_2 |\yn|^2)\partial_i\, T^i_0(U).
\end{align*}
We integrate by parts to find that
\begin{align*}
 2\left|\int_{\{s\}\times W(s)} (1+\kappa_2\, |\yn|^2)\partial_i\, T^i_0(U) \right|
&\leq 
4\kappa_2\int_{\{s\}\times W(s)} \abs{y^{\nu}}\abs{T^\nu_0(U)}\\
&\qquad
+ 2\int_{\{s\}\times S^{1}\times \partial W_{\nu}(s)} (1+\kappa_2 \abs{y^{\nu}}^{2})\abs{T^\nu_0} 
\end{align*}
where $T^\nu_0 : = ( T^2_0, T^3_0) = (T^{\nu1}_0, T^{\nu2}_0)$.
We see from the definition \eqref{Tj0} of $T^j_0$ and the uniform bounds \eqref{gupperab} on $(g^{\a\b})$ that 
\be
|T^\nu_0|  \ \le \ 
C\left( |D_\nu \phi| \ |D_\tau \phi| +  \e^2 |F_\nu| \ |F_\tau| \ \right)
\ \le \   
C\left( e^\nu_{\e,\lambda}(U) + |D_\tau\phi|^2 + \e^2|F_\tau|^2\right)
\label{Tnu0}\ee
and it follows from this and \eqref{rho1} that we may choose $c_*$ large enough that
\[
2\int_{\{s\}\times S^{1}\times \partial W_{\nu}(s)} (1+\kappa_2\abs{y^{\nu}}^{2})\abs{T^\nu_0} 
+ II \le 0. 
\]
Also, arguing as in \eqref{Tnu0}, we deduce that
\[
\abs{y^{\nu}}\abs{T_0^{\nu}}
\ \le \ C \left( |D_\tau\phi|^2 + \e^2 |F_\tau|^2 +  |\yn|^2 e^\nu_{\e,\lambda}(U) \right),
\]
and by combining these estimates, we find that $\zeta_1'\le C \zeta_3$, completing the proof of \eqref{p3step1}.
\newline\noindent
{\textbf{2.}}  
Next, recalling the definition \eqref{pc2}, \eqref{D.def} of $\zeta_2$,
\begin{align*}
\zeta_2(s) 
&\le 
\zeta_0 + \zeta_2(s) - \zeta_2(0) \\
&\le \zeta_0 + \int_{S^1} \left|\D^\nu_m(U(s, y^1); \frac 12 \rho_1) - \D^\nu_m(U(0, y^1);\frac 12 \rho_1) \right| d y^1,
\end{align*}
so it follows immediately from Proposition \ref{prop2}  that
\be \label{p3step2}
\zeta_{2}(s)\leq C\zeta_{0}+C\int_{0}^{s} \zeta_{3}(s')ds'\quad\mbox{for $s\in (0,s_{\max}]$.}
\ee
\newline\noindent
{\textbf{3.}}  
We next show that
\be \label{p3step3}
\zeta_{3}(s)\leq C(\zeta_{1}(s)+ \zeta_{2}(s)+ O(\e^2)) \quad\mbox{for $s\in (0,s_{\max}]$.}
\ee
Since \eqref{kappa2} implies that
\[
\zeta_1(s) \ge c \,\zeta_3(s) + \left.\int_{S^1}\left(\int_{W_\nu(s)} e_{\e,\lambda}^\nu(U) - \calE^\lambda_m\right)\right|_{y^0=s} 
\ = \ 
c \,\zeta_3(s) - |S^1| \calE^\lambda_m  \ +  \ \int_{\{s\}\times W(s)} e_{\e,\lambda}^\nu(U) ,
\]
it suffices to show that 
\be\label{p3step3a}
|S^1| \calE^\lambda_m-\int_{\{s\}\times W(s)}e_{\e,\nu}^\nu(U) \ \leq \  C\zeta_{2}(s)+O(\e^2).
\ee
The $s$ variable plays no role in this argument, so we regard it as fixed and do not display it.
We 
will say that
 $y^1\in S^{1}$ is {\em good} if
\[
|\mathcal D_m^{\nu}(U(y^1))|\ \leq \kappa_{1},\qquad \mbox{ where $\kappa_1$ was fixed  in Proposition \ref{prop1}},
\]
and $y^1$ is 
{\em bad} otherwise.
As usual we estimate the size of the bad set by Chebyshev's inequality:
\[
\abs{\{ y^1\in S^{1}: y^1 \quad\mbox{is bad}  \}}\leq \frac {1}{\kappa_{1}}\int_{\{s\}\times S^{1}} | \mathcal D_m^{\nu}(U)|\  dy^1 = C\zeta_{2}(s).
\]
So $\abs{\{ y^1\in S^{1}: y^1 \quad\mbox{is good}  \}}\geq |S^1|- C\zeta_{2}(s)$, and we
obtain the estimate we seek
by applying the lower energy bounds from Proposition \ref{prop1}
in the normal variables for every good $y^1$. Indeed,
\begin{align*}
\int_{\{s\}\times W(s)}e_{\e,\nu}(U) \  &\geq \int_{\{(s,y^1)\in S^{1}\ \mbox{is good}\}}\left (\int_{W_{\nu}(s)}e_{\e,\nu}(U)d\yn\right)dy^1\\
&\geq  (|S^1| - C\zeta_{2}(s))\ (\calE^\lambda_m - C\e^2)\\
&\geq |S^1|\calE^\lambda_m  -C\zeta_{2}(s)- C\e^2.
\end{align*}
Rearranging gives \eqref{p3step3a}.
\newline\noindent
{\textbf{4.}}  We gather the previous steps to conclude
\[
\zeta_{3}(s)\leq C(\zeta_{1}(s)+ \zeta_{2}(s)+ O(\e^2)) \leq C \zeta_{0}+ C\int_{0}^{s} \zeta_{3}(s')ds' + CO(\e)\leq  C \zeta_{0}+ C\int_{0}^{s} \zeta_{3}(s')ds', 
\] 
since $\zeta_{0}\geq \e^2$.  Then by Gronwall's inequality,
\[
\zeta_{3}(s)\leq C\zeta_{0}  \quad\mbox{for $s\in (0,s_{\max}]$},
\]
and hence from Steps 1 and 2,
\[
\zeta_{1}(s),\zeta_{2}(s)\leq C\zeta_{0}
\]
as needed.
\end{proof}

\section{proof of Theorem \ref{mainthm}}\label{proof_mainthm}

In this section we complete the proof of Theorem \ref{mainthm}. This involves,
among other things, combining
the weighted energy estimates of Proposition
\ref{prop3}, expressed in the normal coordinate system  and
effective near $\Gamma$, with energy estimates in the standard
coordinate system, effective away from $\Gamma$.

In this section we will write 
\begin{align*}
e_{\e,\lambda}(\calU,\eta) 
&\ = \  \frac 12 |D\vp|^2 + \frac {\e^2}2 |\F|^2 + \frac \lambda{8\e^2}(|\vp|^2-1)^2 ;\\
e_{\e,\lambda}(U,g) 
&\ = \  
\frac 12 a^{\mu\nu} \ip{ D_{\mu}\phi ,D_{\nu}\phi } 
+ \frac {\e^2}4  \left( F_{\a\b}F^{\a\b} - 2 F^{0 \b}F_{0 \b }
\right)
+\frac{\lambda}{8\e^2}(|\phi|^2-1)^2
\end{align*}
for the natural energy densities with respect to 
standard coordinates and normal coordinates respectively, where
we raise indices with  $(g^{\a\b})$ in the second expression.
It is straightforward to check from the definitions and \eqref{rho1} that
there exists a $C$ independent of $\calU$ and $\e\in (0,1]$ such that
\be
\frac 1C e_\e(\calU,\eta)(\psi(y)) \le \ e_\e(U,g)(y)  \ \le \ C e_\e(\calU,\eta)(\psi(y))
\label{comparable}\ee
for $y\in (-T_1,T_1)\times S^1\times B_\nu(\rho_1)$.

\begin{prop}\label{mainprop2}
Assume that $\Gamma, T_0, T_1, \rho_0, \calN$ are as in the statement of
Theorem \ref{mainthm}, and assume that $\lambda>0$ and $m\in \Z$
satisfy the conditions in Theorem \ref{mainthm}.
Let $\rho_1, \kappa_2$ denote the constants found in Proposition \ref{prop3}.

Let $\calU = (\vp, \A)$ solve the abelian Higgs model \eqref{ahm1}-\eqref{ahm2}
on $\R^{1+3}$, and let $U = \psi^*\calU$,
so that $U$ solves \eqref{ahm1g}-\eqref{ahm2g} on $(-T_1,T_1)\times S^1\times B_\nu(\rho_0)$.

Define
\begin{align*}
\tilde \zeta_1(s) &:= \left.\int_{S^1} \left( \int_{B_\nu(\rho_1/2)} (1+\kappa_2|\yn|^2)e_{\e,\lambda}(U,g) d\yn  - \calE^m_\lambda \right) dy^1\right|_{y^0=s} 
\\ 
\tilde \zeta_2(s) &:= D_m(U(s ,\cdot); \frac 12 \rho_1) 
\\
\tilde \zeta_{3} (s)&:=
\left.\int_{S^1}\int_{B_\nu(\rho_1/2)} \Big(\abs{D_{\tau} \phi}^{2}+\e^{2}\abs{F_{\tau}}^{2}+\abs{y^{\nu}}^{2}e_{\e,\lambda}^{\nu}(U) \Big)
 d\yn \ dy^1\ \ \right|_{y^0 = s}  
\end{align*}
and
\[
\tilde \zeta_4(t) := 
\int_{(\{t\}\times \R^3) \setminus \calN_1}  e_{\e,\lambda}(\calU, \eta) dx
\]
where
\[
\calN_1 := \psi \left( (-T_1,T_1)\times S^1\times B_\nu(\rho_1/2) \right) \cap
\left( (-T_0,T_0)\times \R^3 \right).
\]
Finally, let $\zeta_0 := \max\{\tilde  \zeta_1(0), \tilde \zeta_2(0),\tilde \zeta_4(0), \e^2 \}$.

Then there exists a constant $C$, independent of $\calU$ and $\e\in (0,1]$, such that
\be
\tilde \zeta_i(s) \le C \zeta_0 
\mbox{ for $=1,2,3$ and $-T_1\le s\le T_1$},
\quad\quad
\tilde \zeta_4(t) \le C \zeta_0 \mbox{ for }-T_0\le t \le T_0.
\label{tcb}\ee
\end{prop}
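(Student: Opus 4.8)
The plan is to combine the interior (normal-coordinate) estimate of Proposition \ref{prop3} with an exterior energy estimate near $\Gamma$, coupling the two through a flux term on the lateral boundary $\psi\left((-T_1,T_1)\times S^1\times \partial B_\nu(\rho_1/2)\right)$. First I would note that Proposition \ref{prop3}, applied with $\rho_1$ replaced by the same $\rho_1$ (so that its $\zeta_i$ are comparable to the $\tilde\zeta_i$ here on $B_\nu(\rho_1/2)$ up to the passage from $e_{\e,\lambda}(U)$ to $e_{\e,\lambda}(U,g)$, which differ only by lower-order terms controlled by \eqref{pos2}), already gives $\tilde\zeta_i(s)\le C\max\{\tilde\zeta_1(0),\tilde\zeta_2(0),\e^2\}$ for $i=1,2,3$ on a time interval of the form $(0,s_{\max})$. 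The extra ingredients needed are: (a) to remove the shrinking of the spatial domain by instead running the argument on the \emph{fixed} ball $B_\nu(\rho_1/2)$, which requires controlling the incoming flux through its boundary by $\tilde\zeta_4$; (b) to propagate the estimate to all $s\in[-T_1,T_1]$, not just $[0,s_{\max}]$, which is done by symmetry in $y^0$ and by iterating the short-time estimate finitely many times; and (c) to close the loop with an estimate for $\tilde\zeta_4(t)$.

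The core new step is the estimate for $\tilde\zeta_4$. Here I would integrate the exact conservation law $\partial_\a(T^\a_\b\sqrt{-g})=0$ — or rather the standard (flat, standard-coordinate) energy identity $\partial_t e_{\e,\lambda}(\calU,\eta) = \partial_i(\text{flux})$ for solutions of \eqref{ahm1}-\eqref{ahm2} — over the region $\left((0,t)\times\R^3\right)\setminus \calN_1$, i.e. over the exterior of the truncated normal-coordinate tube. The boundary of this region consists of pieces at times $0$ and $t$ (giving $\tilde\zeta_4(t)-\tilde\zeta_4(0)$, up to the contribution of $\{t'\}\times\R^3$ intersected with the ``caps" of $\calN_1$, which are handled using \eqref{T1}, \eqref{T1aa} so that $\calN_1$ really does separate $\{t=T_0\}$ from $\{t=-T_0\}$ inside the tube) plus the lateral boundary $\partial\calN_1 = \psi\left((-T_1,T_1)\times S^1\times\partial B_\nu(\rho_1/2)\right)$. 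On that lateral boundary the flux is bounded, after changing to normal coordinates via \eqref{comparable}, by the integrand of $\tilde\zeta_3$ restricted to $\{|\yn|=\rho_1/2\}$ — more precisely by $C(|D_\tau\phi|^2 + \e^2|F_\tau|^2 + e^\nu_{\e,\lambda}(U))$, which is exactly the kind of boundary term already appearing in Step 1 of the proof of Proposition \ref{prop3} via $T^\nu_0$ (see \eqref{Tnu0}). Integrating this flux in $y^0$ over $(0,s)$ produces a term bounded by $C\int_0^s(\tilde\zeta_1(s')+\tilde\zeta_3(s'))ds' + Cs\,\calE^\lambda_m\cdot 0$ — wait, the $e^\nu_{\e,\lambda}$ piece on the boundary circle is not directly controlled by $\tilde\zeta_3$; this is where one must instead use that the \emph{full} boundary energy $\int e^\nu_{\e,\lambda}$ over the shell is not large, or alternatively choose $\rho_1/2$ as a ``good radius'' in an averaged sense. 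In practice I would avoid this by running Proposition \ref{prop3} on the slightly larger ball $B_\nu(\rho_1)$ (as stated there), so that the flux through $\partial B_\nu(\rho_1/2)$ is dominated by a full-strength interior quantity $\int_{B_\nu(\rho_1)\setminus B_\nu(\rho_1/2)} e_{\e,\lambda}(U)$, and the shrinking-domain mechanism in Proposition \ref{prop3} absorbs the sign of the lateral term.

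Assembling: set $Z(s) := \tilde\zeta_1(s) + \tilde\zeta_2(s) + \tilde\zeta_4(s)$. From the argument above one gets a Gronwall-type inequality $Z(s)\le C\zeta_0 + C\int_0^s Z(s')\,ds'$ for $0\le s\le s_{\max}$, where the constant $C$ and $s_{\max}>0$ are independent of $\e$ and of $\calU$; the interior estimates of Proposition \ref{prop3} give $\tilde\zeta_3(s)\le C(\tilde\zeta_1(s)+\tilde\zeta_2(s)+\e^2)$ (its Step 3), and the exterior identity gives $\tilde\zeta_4(s)\le\tilde\zeta_4(0) + C\int_0^s(\tilde\zeta_3(s') + \tilde\zeta_4(s'))\,ds'$ (using the lateral flux bound), while $\tilde\zeta_1,\tilde\zeta_2$ satisfy the bounds in Steps 1--2 of Proposition \ref{prop3}. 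Gronwall then yields $Z(s)\le C\zeta_0$ on $[0,s_{\max}]$, hence (since $s_{\max}$ depends only on $\Gamma,\rho_1,\lambda,m$) after finitely many iterations on $[0,T_1]$, and by the reflection $y^0\mapsto -y^0$ on $[-T_1,0]$, on all of $[-T_1,T_1]$; and $\tilde\zeta_4(t)\le C\zeta_0$ on $[-T_0,T_0]$ follows along the way. The main obstacle is precisely the bookkeeping of the lateral flux term on $\partial\calN_1$: one must be careful that the sign works out (so that, as in the proof of Proposition \ref{prop3}, a large enough choice of the domain-shrinking speed $c_*$, or a suitable choice of the intermediate radius, makes the outgoing boundary contribution nonpositive) and that the remaining incoming contribution is genuinely controlled by $\tilde\zeta_4$ rather than by the total energy, which is only $O(|\ln\e|)$ or worse and would be useless here.
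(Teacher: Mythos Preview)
Your overall architecture --- couple the interior weighted estimate of Proposition \ref{prop3} with an exterior energy estimate, then iterate --- is exactly right, and matches the paper. But your handling of the lateral flux has a real gap that you yourself half-notice and do not actually close.

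The issue is this: if you integrate the standard energy identity over the \emph{sharp} exterior region $((0,t)\times\R^3)\setminus\calN_1$, the lateral term is a \emph{surface} integral over $\psi\big((0,s)\times S^1\times\partial B_\nu(\rho_1/2)\big)$. Its integrand is indeed (up to constants) bounded by $|D_\tau\phi|^2+\e^2|F_\tau|^2+|\yn|^2 e^\nu_{\e,\lambda}(U)$ on that circle, since $|\yn|=\rho_1/2$ there; but $\tilde\zeta_3(s)$ is a \emph{volume} integral over $B_\nu(\rho_1/2)$ of the same expression, and there is no trace inequality that lets you control the surface integral by the volume integral. Your proposed fix --- run Proposition \ref{prop3} on the larger ball and say ``the shrinking-domain mechanism absorbs the sign'' --- conflates two different boundary terms: the $c_*$-trick in Proposition \ref{prop3} kills the \emph{outgoing} interior flux as the domain shrinks, but the lateral term you need to control here is the \emph{incoming} flux in the exterior identity, on the opposite side, with no helpful sign.

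The paper avoids this by never using a sharp boundary. It applies the energy identities (Lemmas \ref{standard.ee1} and \ref{standard.ee2}) against smooth cutoffs $\chi$ whose gradients are supported in an annulus $B_\nu(\rho_1/2)\setminus B_\nu(\rho_1/4)$ (resp.\ $B_\nu(2\rho_1)\setminus B_\nu(\rho_1/2)$). The flux then becomes a \emph{volume} integral $\int e_{\e,\lambda}|D\chi|$ over that annulus, and this \emph{is} dominated by $\tilde\zeta_3$ (since $|\yn|\sim\rho_1$ throughout the annulus) or by $\tilde\zeta_4$. This is the device that makes the coupling work.

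A second point: the paper does not run a single combined Gronwall. It alternates. First Proposition \ref{prop3} gives interior control on $[0,s_1]$, in particular smallness in the annulus; this feeds the exterior cutoff estimate to get $\tilde\zeta_4\le C\zeta_0$ on $[0,t_1]$; then, crucially, the exterior smallness is used (via Lemma \ref{standard.ee2} with a cutoff supported in $B_\nu(2\rho_1)\setminus B_\nu(\rho_1/2)$) to show that the interior data at a slightly later time $\sigma_1$ again satisfies $\zeta_i(0;\sigma_1)\le C\zeta_0$ on the \emph{full} ball $B_\nu(\rho_1)$, so Proposition \ref{prop3} can be restarted. Your sketch glosses over this reset step, which is exactly what lets you iterate past $s_{\max}$ despite the shrinking domain in Proposition \ref{prop3}.
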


We follow the proof of
\cite[Theorem 22]{Jerrard09}. We start
by presenting all the details, to illustrate
the basic argument,
and we refer to \cite{Jerrard09} for the final
part of the proof.

The proof will use some
standard energy estimates
for $\calU$ and $U$ respectively, which we recall for the reader's convenience.

\begin{lem}
Let $\calU$ be a smooth finite-energy solution of the abelian Higgs model \eqref{ahm1}-\eqref{ahm2} in standard coordinates on
$\R^{1+3}$. For any $a<b$ and any bounded Lipschitz function $\chi:(a,b)\times \R^{3}\to \R$
\be
\left|
 \int_{\{b\}\times \R^3} e_\e(\calU,\eta) \chi  \ dx -
 \int_{\{a\}\times \R^3} e_\e(\calU,\eta) \chi  \ dx
\right| \ \le \int_{(a,b) \times \R^3} e_\e(\calU,\eta) |D\chi| \ dx \ dt.
\label{ee1}\ee
\label{standard.ee1}\end{lem}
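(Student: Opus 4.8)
The plan is to deduce \eqref{ee1} from the exact conservation law for the stress--energy tensor, exactly as in the scalar case treated in \cite{Jerrard09}. Recall that with the flat metric $g=\eta$ the stress--energy tensor $T_{\alpha\beta}$ of the abelian Higgs model satisfies $\partial_\alpha T^\alpha_0=0$ on any smooth solution of \eqref{ahm1}--\eqref{ahm2} — this is the Noether identity for translation invariance, i.e.\ the flat-space specialization of the conservation law recalled in the discussion following Lemma \ref{energy} (where $\sqrt{-g}=1$) — and that, in standard coordinates as in \eqref{eeplambda}, $e_\e(\calU,\eta)=2T^0_0(\calU)$. In particular $\partial_0 e_\e(\calU,\eta)=-2\,\partial_i T^i_0(\calU)$.

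First I would record the pointwise ``dominant energy'' bound $2\,|T^i_0(\calU)|\le e_\e(\calU,\eta)$ for $i=1,2,3$, equivalently $|(T^1_0,T^2_0,T^3_0)|\le T^0_0$. This follows by inspection of the explicit formula \eqref{Tj0} with $g=\eta$: the Higgs contribution $-\tfrac12\ip{D_i\vp,D_0\vp}$ is controlled by $\tfrac14(|D_i\vp|^2+|D_0\vp|^2)$ by Cauchy--Schwarz, the curvature contribution $-\tfrac{\e^2}{2}\F^{i\nu}\F_{0\nu}$ is a Poynting-type vector bounded by $\tfrac{\e^2}{4}|\F|^2$, and summing these gives the claim against $T^0_0=\tfrac12 e_\e(\calU,\eta)$.

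Next, multiply the identity $\partial_0 e_\e(\calU,\eta)=-2\,\partial_i T^i_0$ by $\chi$, add $e_\e(\calU,\eta)\,\partial_0\chi$, and integrate over the slab $(a,b)\times\R^3$. The left-hand side is the difference of the two boundary integrals in \eqref{ee1}; on the right-hand side an integration by parts in the spatial variables moves $\partial_i$ onto $\chi$, yielding
\[
\int_{\{b\}\times\R^3} e_\e(\calU,\eta)\,\chi\ dx-\int_{\{a\}\times\R^3} e_\e(\calU,\eta)\,\chi\ dx
=\int_{(a,b)\times\R^3}\bigl(e_\e(\calU,\eta)\,\partial_0\chi+2\,T^i_0(\calU)\,\partial_i\chi\bigr)\ dx\,dt.
\]
The one point that requires care is the vanishing of the boundary terms at spatial infinity, since $\chi$ is only assumed bounded: as $\calU$ is a finite-energy solution, $e_\e(\calU,\eta)\in L^1((a,b)\times\R^3)$ (finite on each time slice, and bounded on $[a,b]$ by the energy inequality), so one first replaces $\chi$ by $\chi\,\theta_R$ with $\theta_R$ a spatial cutoff to $\{|x|\le R\}$ with $|\nabla\theta_R|\le C/R$, integrates by parts with this compactly supported weight, and lets $R\to\infty$; the extra term $\int e_\e(\calU,\eta)\,\chi\,|\nabla\theta_R|$ tends to $0$ by dominated convergence.

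Finally, taking absolute values in the displayed identity and using $2|T^i_0(\calU)|\le e_\e(\calU,\eta)$ bounds the integrand by $e_\e(\calU,\eta)\,|D\chi|$, which is \eqref{ee1}. There is no genuine obstacle here — this is a standard multiplier/energy-flux estimate — and the only steps one must not skip are the justification of the spatial integration by parts (handled by the finite-energy hypothesis as above) and the pointwise bound $|T^i_0|\le T^0_0$. The same scheme, carried out instead in the normal-coordinate metric $g$ and with a weight built from a cutoff, is what underlies the proof of Proposition \ref{mainprop2}.
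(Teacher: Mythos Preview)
Your proof is correct and follows essentially the same route as the paper: both use the flat-space energy identity $\partial_0 e_\e(\calU,\eta)=-2\partial_i T^i_0$, integrate by parts against $\chi$, invoke the pointwise energy-flux bound $|2T^i_0|\le e_\e(\calU,\eta)$, and handle the unbounded support of $\chi$ by a cutoff/approximation argument using the finite-energy hypothesis. One small quibble: the componentwise bound $|T^i_0|\le T^0_0$ and the vector bound $|(T^1_0,T^2_0,T^3_0)|\le T^0_0$ are not literally equivalent (the latter is stronger), though either suffices here, possibly up to an absolute constant that is harmless for the application.
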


\begin{proof} 
Recall the energy identity
\be
\partial_t \,e_\e(\calU,\eta) - \partial_i \left( \delta^{ij} \ip{D_j\vp, D_0\vp } + \e^2 \F^{i\nu}\F_{0\nu}\right) = 0 
\label{ahm.ee}\ee
for solutions of \eqref{ahm1}-\eqref{ahm2}. This is standard and also can  be deduced from
\eqref{nrg4} (replacing $(g_{\a\b})$ by $(\eta_{\a\b})$). We integrate by parts and use the fact that 
$
\left|  \ip{D_i\vp, D_0\vp } + \e^2 \F^{i\nu}\F_{0\nu}\right|  \le e_{\e,\lambda}(\calU, \eta)
$ and routine estimates to deduce \eqref{ee1}. If $\chi$ has unbounded support, then  one can approximate it by functions
with compact support and use the fact that $\calU$ has finite energy to
pass to limits and obtain \eqref{ee1}.
\end{proof}

\begin{lem}
Let $U$ be a smooth solution of \eqref{ahm1g}-\eqref{ahm2g}
on
$(-T_1,T_1)\times S^1 \times B_\nu(\rho_0)$. Then for the number
$\rho_1\in (0,\rho_0]$ from Proposition \ref{prop3},
given $-T_1\le a < b\le T_1$ and  $\chi\in W^{1,\infty}((-T_1,T_1)\times S^1\times B_\nu(2\rho_1))$, if  $\chi(s,\cdot) $ has compact support in $S^1\times B_\nu(2\rho_1)$
for every $s\in [a,b]$, then
\begin{multline}
\left|
\int_{\{b\}\times S^1\times B_\nu(2\rho_1) } e_\e(U,g) \chi  \ -
\int_{\{a\}\times S^1\times B_\nu(2\rho_1) } e_\e(U,g) \chi  \ 
\right|  \\
 \le C \int_{(a,b) \times S^1\times B_\nu(2\rho_1)}
 e_\e(U,g) \left (|\chi| +   |D\chi| \right).
\label{std_v_est}\end{multline}
\label{standard.ee2}\end{lem}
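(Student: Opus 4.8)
The plan is to mimic the proof of Lemma \ref{standard.ee1}, replacing the flat energy identity \eqref{ahm.ee} by the curved differential identity established inside the proof of Lemma \ref{energy}. That identity, namely \eqref{nrg4}, can be written schematically as
\[
\partial_{0}e_{\e,\lambda}(U) \ = \ \partial_{i}\Phi^{i} \ + \ R,
\]
where $\Phi^{i} := g^{i\b}\ip{D_\b\phi, D_0\phi} + \e^{2}F_{0\a}F^{i\a}$ is the spatial energy flux and $R$ collects the non-divergence ``error'' terms $\ip{b\cdot D\phi, D_0\phi}$, $\e^{2}F_{0\a}g^{\a\gamma'}b^{\gamma}F_{\gamma\gamma'}$, $\tfrac12(\partial_0 g^{\a\b})\ip{D_\b\phi, D_\a\phi}$ and $\tfrac{\e^2}{4}\partial_0(g^{\a\gamma}g^{\b\gamma'})F_{\a\b}F_{\gamma\gamma'}$. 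Using the uniform metric bounds \eqref{gupperab}, \eqref{dtgupperab}, \eqref{g3} together with Lemma \ref{L:pos2}, one checks that $|\Phi^{i}| \le C\, e_\e(U,g)$ and $|R| \le C\, e_\e(U,g)$ on $(-T_1,T_1)\times S^1\times B_\nu(2\rho_1)$, where the densities $e_{\e,\lambda}(U)$, $2T^0_0$ and $e_\e(U,g)$ are mutually comparable uniformly in $\e\in(0,1]$ and in $U$.

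First I would multiply this identity by $\chi$ and integrate over $(a,b)\times S^1\times B_\nu(2\rho_1)$. Integration by parts in $y^0$ produces the two slice integrals on the left-hand side of \eqref{std_v_est}, at $y^0=b$ and $y^0=a$, together with the bulk term $-\int e_{\e,\lambda}(U)\,\partial_0\chi$. Integration by parts in the spatial variables $(y^1,\yn)$ produces $-\int \Phi^{i}\,\partial_i\chi$ plus a boundary integral over $\partial(S^1\times B_\nu(2\rho_1))$; since $S^1$ has no boundary and, for each fixed $s$, $\chi(s,\cdot)$ has compact support inside $S^1\times B_\nu(2\rho_1)$, the quantity $\Phi^{i}\chi$ vanishes near $\partial B_\nu(2\rho_1)$ and this boundary integral is zero. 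What remains is $\int R\,\chi$. Combining the pieces and using the bounds on $\Phi^i$ and $R$ gives
\[
\left|\int_{\{b\}\times S^1\times B_\nu(2\rho_1)} e_\e(U,g)\,\chi \ - \int_{\{a\}\times S^1\times B_\nu(2\rho_1)} e_\e(U,g)\,\chi \right| \ \le \ C\int_{(a,b)\times S^1\times B_\nu(2\rho_1)} e_\e(U,g)\,\big(|D\chi|+|\chi|\big),
\]
which is \eqref{std_v_est}. The extra $|\chi|$ term, absent from the flat estimate \eqref{ee1}, is precisely the contribution of the non-divergence remainder $R$, which is present only because the metric $g$ is not flat. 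Finally, for a merely Lipschitz (i.e. $W^{1,\infty}$) weight $\chi$ one approximates by smooth functions with the same compact-support properties in each time slice and passes to the limit, and one may assume $U$ smooth by the usual density argument.

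The computation is essentially routine once \eqref{nrg4} is available; the only delicate points are (i) keeping track of the slightly different energy-density conventions of Sections \ref{ahm} and \ref{proof_mainthm}, handled by their uniform comparability, and (ii) checking that the normal-direction boundary integral over $\partial B_\nu(2\rho_1)$ genuinely drops out. I expect (ii) to be the main, albeit mild, obstacle: it is exactly the hypothesis that $\chi(s,\cdot)$ be compactly supported in $S^1\times B_\nu(2\rho_1)$ for every $s$ that makes it work, and this is the reason that hypothesis is imposed. Beyond this there is no serious analytic difficulty.
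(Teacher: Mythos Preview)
Your proposal is correct and follows essentially the same approach as the paper: the paper's proof simply says to repeat the argument of Lemma \ref{standard.ee1} with the curved identity \eqref{nrg4} in place of the flat \eqref{ahm.ee} and \eqref{rho1} in place of its flat counterpart, which is exactly what you do. A minor remark: $e_{\e,\lambda}(U) = 2T^0_0$ and $e_\e(U,g)$ are literally the same object (different notation in Sections \ref{ahm} and \ref{proof_mainthm}), so there is no comparability to check there, and $U$ is already assumed smooth so the density argument for $U$ is superfluous.
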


\begin{proof}
We fix $\rho_1$ as in the proof of Proposition \ref{prop3} so that
\eqref{rho1} holds. Then the proof of \eqref{std_v_est}
is exactly like the proof of  \eqref{ee1} in Lemma \ref{standard.ee1} above, except that
we use for example \eqref{nrg4} and \eqref{rho1} in place of their counterparts in standard
coordinates.
\end{proof}

We will also often use the fact that there exists some $C>0$ such that
\be
\frac 1C \le |\det D\psi(y)| = \sqrt{-g(y)}   \le C 
\label{Jpsi}\ee
for all $y\in (-T_1,T_1)\times S^1\times B_\nu(\rho_0)$.
This is a straightforward consequence of the
definition of $\psi$. A similar estimate holds for the restriction of 
$\psi$ to $\{ y : y^0=0\}$, which we will call $\psi^0$.

\begin{proof}[Proof of Proposition \ref{mainprop2}]

{\bf 1}. Recall that we defined $\zeta_i(s)$ for $i=1,2,3$ in the statement of
Proposition \ref{prop3}. 
Comparing these with the definitions of $\tilde\zeta_i(s)$, we see
that $\zeta_2(s) = \tilde \zeta_2(s)$ and
$\zeta_1(0) \le \tilde\zeta_1(0) + C\tilde\zeta_4(0)$, using \eqref{Jpsi} and
a change of variables. 
Thus $\zeta_i(0)\le C \zeta_0$ for $i=1,2$, and  then
Proposition \ref{prop3} immediately implies that
\be
\tilde \zeta_i(s) \le \zeta_i(s) \le C \zeta_0
\qquad\qquad
\mbox{ for $i=1,2,3$ and $0\le s \le s_1 := \min \{T_1, \rho_1/2c_*\}$}
\label{mt2.00}\ee
In particular, if we write $$B_{\nu}(\rho' \setminus \rho) := B_\nu(\rho')\setminus B_\nu(\rho),$$ then
the definition of $\zeta_3$ implies that
\be
\int_{\{s\} \times S^1 \times B_\nu(\frac {\rho_1} {2} \setminus \frac {\rho_1}{4})}
e_{\e,\lambda}(U, g) \le C(\rho_1)\zeta_3(s) \ \le \ C \zeta_0 \qquad\mbox{ for }s\in [0,s_1].
\label{mt2.0}\ee
Then it follows from a change of variables and \eqref{comparable}, \eqref{Jpsi} that
\be
\int_{ \psi([0,s_1]\times S^1 \times B_\nu(\frac{\rho_1}{2}\setminus\frac{ \rho_1}{4}))}
e_{\e,\lambda}(\calU, \eta) \ \le C \zeta_0.
\label{mt2.1}\ee

{\bf 2}. Next we consider the standard coordinate system, and we  show that the energy of $\calU$ 
is small away from $\Gamma$ for all $t\in [0,t_1]$,
for some $t_1>0$. The idea is to apply
Lemma \ref{standard.ee1} with $a=0$ and $b=t\in (0,t_1]$
and a suitable cutoff function $\chi$, and to use \eqref{mt2.1} to estimate the terms
appearing on the right-hand side of \eqref{ee1}.

To carry this out, let $\chi: [-T_0,T_0]\times \R^3\to \R$ be a smooth function
such that 
\begin{align*}
\chi &= 1\mbox{ on } 
 ([-T_0,T_0]\times \R^3)\setminus \calN_1
\\
\chi &= 0 \mbox{ on }
([-T_0,T_0]\times \R^3) \cap\psi \left( (-T_1,T_1)\times S^1\times B_\nu(\frac{\rho_1}4) \right).
\end{align*}
Thus $D\chi$ is supported on
$([-T_0,T_0]\times \R^3) \cap\psi \left( (-T_1,T_1)\times S^1\times B_\nu(\frac{\rho_1}{2}\setminus\frac{ \rho_1}{4}) \right)$.

Next, the construction of $\psi$ implies that $\partial_0\psi >0$ everywhere
and that $\psi$ maps the set $\{y :y^0=0\}$ into the set $\{(t,x): t=0\}$,
and these facts imply that there exists some $t_1>0$ such that
\be
\mbox{supp}(D\chi) \cap ( [0,t_1]\times \R^3) \subset 
 \psi([0,s_1]\times S^1 \times B_\nu(\frac{\rho_1}{2}\setminus\frac{ \rho_1}{4})).
\label{mt2.2}\ee
Now we apply Lemma \ref{standard.ee1} to find that for $t\in (0,t_1]$,
\[
\int_{\{t\}\times \R^3} e_{\e,\lambda}(\calU, \eta) \chi 
\ \le 
\ \int_{\{0\}\times \R^3} e_{\e,\lambda}(\calU,\eta) \chi 
+\int_{[0,t_1]\times \R^3} e_{\e,\lambda}(\calU,\eta) |D\chi|.
\]
Moreover, \eqref{mt2.2} and \eqref{mt2.1} imply that 
\[
\int_{[0,t_1]\times \R^3} e_{\e,\lambda}(\calU,\eta) |D\chi|
\ \le \ 
\| D\chi\|_\infty
\int_{ \psi([0,s_1]\times S^1 \times B_\nu(\frac{\rho_1}{2}\setminus\frac{ \rho_1}{4}))} e_{\e,\lambda}(\calU,\eta)
\ \le \  
C \zeta_0.
\]
And using properties of the support of $\chi$ with \eqref{mt2.1} and definition of $\zeta_{0}$,
\begin{align*}
\int_{\{0\}\times \R^3} e_{\e,\lambda}(\calU,\eta) \chi  \ 
&= 
\ \tilde\zeta_4(0) + 
\int_{\psi( \{0\} \times S^1 \times B_\nu(\frac{\rho_1}{2}\setminus\frac{ \rho_1}{4}))} e_{\e,\lambda}(\calU,\eta)\le C \zeta_0.
\end{align*}

Since $\chi =1$ on the complement of $\calN_1$, it follows that 
\be
\tilde\zeta_4(t) = \int_{(\{t\}\times \R^3)\setminus \calN_1} e_{\e,\lambda}(\calU, \eta)
\le C \zeta_0 \quad\mbox{ for every $t\in [0,t_1]$}.
\label{mt2.3}\ee

{\bf 3}. Now for $\sigma\ge 0$, define
\begin{align*}
\zeta_1(s;\sigma) &:= \left.\int_{S^1} \left( \int_{B_\nu(\rho_1-c_*s)} (1+\kappa_2|\yn|^2)e_{\e,\lambda}(U) d\yn  - \calE^m_\lambda \right) dy^1\right|_{y^0=s+\sigma} 
\\
\zeta_2(s;\sigma) &:= D_m(U(s+\sigma ,\cdot); \frac 12 \rho_1) 
\\
\zeta_{3} (s;\sigma)&:=
\left.\int_{S^1}\int_{B_\nu(\rho_1-c_*s)} \Big(\abs{D_{\tau} v}^{2}+\e^{2}\abs{F_{\tau}}^{2}+\abs{y^{\nu}}^{2}e_{\e,\lambda}^{\nu}(U) \Big)
 d\yn \ dy^1\ \ \right|_{y^0 = s+\sigma}  .
\end{align*}
We claim that there exists some $\sigma_1\in (0,s_1]$ and a constant $C>0$ so that 
\be
\mbox{$\zeta_i(0;\sigma_1) \le C\zeta_0$ for $i=1,2,3,$.}
\label{mt2.4}\ee
This will allow us to apply Proposition \ref{prop3} to extend the weighted energy estimates
for $U$ beyond time $y^0= s_1$.

For $i=2$, note that $\zeta_2(s;\sigma) = \zeta_2(s+\sigma)$, so that in particular
$\zeta_2(0;\sigma) = \zeta_2(\sigma)\le C\zeta_0$ for every $\sigma\in (0,s_1]$,
by \eqref{mt2.00}.
For $i=1,3$, it suffices to find some  $\sigma_1\in (0,s_1]$ such that $\sigma_1< \frac {\rho_1}{4c_*}$ 
and
\be
\left. \int_{S^1}\int_{B_\nu(\rho_1\setminus \frac 34 \rho_1)} e_{\e,\lambda}(U,g ) \right|_{y^0=\sigma_1} \  \le \  C \zeta_0
\label{mt2.5}\ee
since then the definitions, \eqref{rho1} and \eqref{mt2.00} imply that 
\[
\zeta_i(0;\sigma_1) \le  \zeta_i(\sigma_1) +
C\left. \int_{S^1}\int_{B_\nu(\rho_1\setminus\frac34 \rho_1)} e_{\e,\lambda}(U,g ) \right|_{y^0=\sigma_1} \  \le \  C \zeta_0
\]
proving \eqref{mt2.4}.

We will deduce \eqref{mt2.5} by using Lemma \ref{standard.ee2} with a suitable cutoff function $\chi$, and using  \eqref{mt2.3} and
a change of variables to estimate the terms
appearing on the right-hand side of \eqref{std_v_est}.

To carry this out, let $\chi:(-T_1,T_1)\times S^1\times B_\nu(\rho_0)\to [0,1]$
be a smooth cutoff function, indepdendent of $y^0\in (-T_1,T_1)$, with support in $ S^1\times B_\nu(2 \rho_1 \setminus \frac {\rho_1}2)$, and such that $\chi = 1$ on 
$S^1\times B_\nu( \rho_1\setminus \frac 34 \rho_1)$.
We also fix $\sigma_1>0$ so small that
\be
\psi( [0,\sigma_1]\times S^1\times B_\nu(\rho_1 \setminus\frac{ \rho_1}2) ) \subset ([0,t_1]\times \R^3)\setminus \calN_1.
\label{sigma1}\ee
Then 
\begin{align*}
\left. \int_{S^1}\int_{B_\nu( \rho_1\setminus \frac 34\rho_1)} e_{\e,\lambda}(U,g ) \right|_{y^0=\sigma_1} \  
&\le \  
\left. \int_{S^1}\int_{B_\nu(2\rho_1)} e_{\e,\lambda}(U,g ) \chi
\right|_{y^0=\sigma_1} \  
\\
&\le
\left. \int_{S^1}\int_{B_\nu(2\rho_1)} e_{\e,\lambda}(U,g ) \chi
\right|_{y^0= 0} \  \\
&\qquad
+
C(\chi) \int_{ [0,\sigma_1]\times S^1\times B_\nu( 2\rho_1\setminus \frac 12 \rho_1)}
e_{\e,\lambda}(U,g)
\end{align*}
The first term on the right-hand side is bounded by $C (\zeta_3(0) + \tilde\zeta_4(0)) \le C \zeta_0$,
and it follows from \eqref{sigma1}, \eqref{mt2.3}, \eqref{Jpsi} and a change of variables that
the second term on the right-hand side is bounded by $C\zeta_0$. Thus we have proved \eqref{mt2.5}, and hence also \eqref{mt2.4}.

{\bf 4}. It follows from \eqref{mt2.4} and Proposition \ref{prop3} that
\be
\mbox{ $\tilde \zeta_i(s+\sigma_1) \le \zeta_i(s;\sigma_1) \le C \zeta_0$
for $i=1,2,3$ and $0<s \le s_2 = \min \{ \rho_1/2c_*, T_1-\sigma_1\}$}
\label{mt2.6}\ee
Note also that $s_2+\sigma_1  > s_1$ unless $s_1 = T_1$.

We now iterate, using Lemma \ref{standard.ee1} to 
estimate $e_{\e,\lambda}(\calU,\eta)$ on $([0,t_2]\times \R^3)\setminus \calN_1$ for some
$t_2>t_1$, with estimates of the right-hand side of \eqref{ee1} provided by \eqref{mt2.6}; 
and then combining the resulting estimate with Lemma \ref{standard.ee2} and
Proposition \ref{prop3} to extend the weighted energy estimates of $e_{\e,\lambda}(U,g)$ beyond
$y^0 = s_2+\sigma_1$.

To complete the proof of the theorem, then, it suffices only to show that after finitely many iterations
of this argument, one can extend the bounds on $\tilde \zeta_i$ to all 
$0 \le s \le T_1 $ (for $i=1,2,3$) and $0\le t \le T_0$ for $i=4$. (The same conclusions
for $-T_1\le s <0$ and $-T_0 \le t <0$ then follow by time reversal symmetry.) A proof of this may
be found 
in \cite[proof of Theorem 22]{Jerrard09} for somewhat different equations, but exactly the same proof is valid here. 
The point is that the proof only involves piecing together estimates in the standard and normal
coordinate systems, and the algorithm for doing so applies equally to any Lorenz-invariant equation.

(In fact the argument in \cite{Jerrard09} relies on a slightly different and more complicated iteration scheme than the  one suggested above, but it remains true that the arguments there
can be used in this setting with essentially no change.)
\end{proof}

We finally prove our main result.

\begin{proof}[Proof of Theorem \ref{mainthm}] 
We will write $\psi_0(y^1, \yn) = \psi(0,y^1, \yn)$. Recall that by assumption, the minimal surface $\Gamma
= \mbox{image(H)}$ satisfies $\partial_0 H(0, y^1) = (1,0,0,0)$ for every $y^1$. As a result,
the normal vectors
$\bar \nu_i$,  satisfy $\bar \nu_i^0 = 0$ for $i=1,2$, see \eqref{barnu}, and hence the range of $\psi_0$ is an open neighborhood of $\Gamma_0$ in  $\{0\}\times \R^3$. 

Also, if we define $d_0(x) := \mbox{dist}(x,\Gamma_0)$, then we note that 
\be
d_0(\psi_0( y^1, \yn)) = |\yn|.
\label{d0yn}\ee
Indeed, again using the fact that $\partial_0 H(0, y^1) = (1,0,0,0)$, we deduce from \eqref{barnu} that the vectors $\{ \partial_1 H(0,y^1), \bar \nu_1, \bar \nu_2\}$ are orthogonal with respect to the {\em Euclidean} inner product, 
and it follows that $|\yn| = |\psi_0(y^1,\yn) - \psi_0(y^1,0)|$ and also the segment $\psi_0(y^1,\yn)-\psi(y^1,0)$ is orthogonal to $T_{\psi(y^1,0)}\Gamma_0$. Since in addition $|\psi_0(y^1,\yn) - \psi_0(y^1,0)|$
is less than the injectivity radius of $\Gamma_0$ by assumption \eqref{T1aa}  on $\rho_0$,
these imply \eqref{d0yn}.

{\bf 1}. We first specify initial data for \eqref{ahm1}-\eqref{ahm2} so
that the constant $\zeta_0$ in Proposition \ref{mainprop2} is small.
The fact that the data needs to satisfy the compatibility
condition \eqref{comp} for well-posedness 
means that this is not completely straightforward.

First, let $U^m_{1,\lambda}$ denote a fixed solution of the minimization problem
\eqref{2dmin} for $\e = 1$, and for general $\e>0$,
let $U^m_{\e,\lambda}$ denote the solution obtained by rescaling $U^m_{1,\lambda}$,
so that
$\phi^\e(y) := \phi(\frac y \e), A^\e(y) := \frac 1{\e} A(\frac y\e)$. 
Then 
\be
e^\nu_{\e,\lambda}(U^m_{\e,\lambda})(y) = \frac 1{\e^2} 
e^\nu_{1,\lambda}(U^m_{1,\lambda})(\frac y \e),
\qquad
\omega(U^m_{\e,\lambda})(y) = \frac 1{\e^2} \omega(U^m_{1,\lambda})(\frac y\e)
\label{scale2}\ee
A useful property of $U^m_{1,\lambda}$ is
\be\label{exp_decay}
e^\nu_{1,\lambda}(U^m_{1,\lambda})(\yn) \le C e^{- c |\yn|}.
\ee
This is proved in  \cite{JaffeTaubes}, Chapter 3, Theorem 8.1 for arbitrary finite-energy critical
points of the $2d$ Euclidean abelian Higgs action.
It follows that
\[
\int_{ \partial B_{\nu}(\rho_1/3) } e^\nu_{\e,\lambda}(U^m_{\e,\lambda}) 
\le Ce^{-c/\e}.
\]
Then by the construction  in the proof of Lemma \ref{ext},
we can modify $U^m_{\e,\lambda}$ on $\R^2\setminus B_\nu(\rho_1/3)$ 
to produce a new configuration $\tilde U^m_{\e,\lambda}$ such that
\be
\tilde U^m_{\e,\lambda} = U^m_{\e,\lambda} \mbox{ in }B_\nu(\rho_1/3),
\qquad\qquad
\int_{ B_\nu(\rho_1/2) \setminus B_\nu(\rho_1/3)} e^\nu_{\e,\lambda}(\tilde U^m_{\e,\lambda})
\le
C e^{-c/\e} \le  C \e^2
 \label{tildeU.2}\ee
and in addition $\tilde U^m_{\e,\lambda}$ has the form
\be
\tilde \phi^m_{\e,\lambda} = e^{i \zeta},\quad
\tilde A^m_{\e,\lambda} = d\zeta
\qquad\mbox{ in } \R^2\setminus B_\nu(\rho_1/2)
\label{trivial}\ee
for some smooth function $\zeta$ taking values in $\R/2\pi \Z$, which in particular
implies that
\be\label{tildeU.3}
e^\nu_{\e,\lambda}(\tilde U^m_{\e,\lambda})   = 0 
\qquad\mbox{ in } \R^2\setminus B_\nu(\rho_1/2).
\ee

We now define
\be
(\vp^0, \A^0) = (\psi_0^{-1})^*( \tilde \phi^m_{\e,\lambda}, \tilde A^m_{\e,\lambda})
\qquad\qquad\mbox{ on }\psi_0(S^1\times B_\nu(\rho_1)) \subset \{0\}\times \R^3,
\label{phi01}\ee
(where here we view $ \tilde \phi^m_{\e,\lambda}, \tilde A^m_{\e,\lambda}$
as functions on $S^1\times \R^2$ that are independent of $y^1$.)
In view of \eqref{trivial}, in 
$\psi_0( S^1\times ( B_\nu(\rho_1) \setminus B_\nu(\rho_1/2))$,
$\calU^0$ has the form
\[
\vp^0 = e^{i \hat \zeta}, \qquad \A^0 = d\hat\zeta 
\qquad\qquad\mbox{ for }\  \hat \zeta := \zeta \circ \psi_0^{-1}.
\]
Now fix a smooth $q:\R^3\setminus [ \psi_0(S^1\times B_\nu(\rho_1/2))]\to \R/2\pi \Z$  
such that $q =\hat \zeta$ on the intersection of the domains of $q$ and  $\hat \zeta$,
and define
 \be
\vp^0 = e^{i q}, \qquad \A^0 = dq \qquad\mbox{ in }\R^3\setminus \psi_0(S^1\times B_\nu(\rho_1/2)).  
\label{phi0.2}\ee
The choice of $q$ implies that \eqref{phi01} and  \eqref{phi0.2} are consistent.

Finally, we let $\calU$ denote the solution of \eqref{ahm1}-\eqref{ahm2}
in the temporal gauge $\A_0 = 0$, 
with initial data
\be\label{phi.t}
(\vp, \A_1,\ldots, \A_3)|_{t=0} = 
(\vp^0, \A_1^0,\ldots, \A_3^0) =: \calU^0,\quad\qquad
\partial_{t} (\vp, \A_1,\ldots, \A_3)|_{t=0} = 0.
\ee
The existence of the solution $\calU$ follows from the discussion in Sections \ref{s:gwp}-\ref{s:id}.

 {\bf 2}. We claim that in this gauge, and for the initial data  \eqref{phi01}, \eqref{phi0.2} and \eqref{phi.t} above,
\be
\tilde \zeta_i(0)\le C \e^2 \qquad \mbox{ for } \ \ i=1,2,4
\label{zetaoorderep}\ee 
We remind the reader that these quantities are defined in the statement of Proposition \ref{mainprop2}.

First, it is routine to check from \eqref{phi0.2}, \eqref{phi.t} that $e_{\e, \lambda}(\calU,\eta) = 0$ in $\R^3\setminus \psi_0(S^1\times B_\nu(\rho_1/2))$, and hence that $\tilde \zeta_4(0) = 0$.

The quantities $\tilde \zeta_1, \tilde \zeta_2$ are expressed in terms of the
$U =(\phi, A) = \psi^*\,\calU$. 
So we must translate our assumptions about $\calU$ at $t=0$ 
into information about $U$ for $y^0=0$.

Let us write $A^0 ( y^1,\yn) := \sum_{i=1}^3 A_i(0, y^1,\yn) dy^i$ to denote the spatial part of $A$ at time $y^0=0$, and $U^0 = (\phi^0,A^0),$ where $\phi^0( y^1,\yn) := \phi(0, y^1,\yn)$. This says that
$U^0 = i^* U$, for $i(y^1,\yn) = (0,y^1, \yn)$. As a result,
\[
U^0 = i^*\psi^* \, \calU = (\psi\circ i)^* \calU = \psi_0^*\,\calU = \psi_0^*\, \calU^0,
\] 
using the fact for the final equality that $\mbox{Image}(\psi_0)\subset \{0\}\times \R^3$.
Hence
\eqref{phi01} implies that $U^0 = ( \tilde \phi^m_{\e,\lambda}, \tilde A^m_{\e,\lambda})$, 
or in other words that
\begin{align*}
\phi(0, y^1,\yn) 
&= \tilde \phi^m_{\e,\lambda}(\yn), \\
\sum_{i=1}^3 A_i(0, y^1, \yn) dy^i 
&= \tilde A^{m,1}_{\e,\lambda}(\yn) dy^{2} + \tilde A^{m,2}_{\e,\lambda}(\yn)dy^{3}.
\end{align*}
As a result,
\be\label{spatial}
D_1\phi = 0, \qquad
F_{1j} = F_{j1} = 0\mbox{ for }j=2,3
\ee
when $y^0=0$, everywhere in $S^1\times B_\nu(\rho_1)$.
Next, we can write the identity $U = \psi^* \calU$ explicitly as
\be
\phi = \varphi\circ \psi, \qquad
A_\alpha = \frac{\partial \psi^\mu }{\partial y^\alpha} \A_\mu\circ \psi
\label{explicit.pullback}\ee
From these we check that
\[
D_{y^0} \phi  
\ = \   
\frac{\partial \psi^\mu }{\partial y^0} \  (D_{x^\mu} \varphi) \circ\psi
\ = \   
\sum_{k=1}^3 \frac{\partial \psi^k}{\partial y^0}
 (D_{x^k} \varphi) \circ\psi
\]
due to the temporal gauge and the initial condition $\partial_{x^0}\varphi = 0$.
(Here for example
$D_{y^0} = \frac{\partial}{\partial y^0} - i A_0$.)
Recall that we have assumed that the initial velocity of $\Gamma$ vanishes.
This states that $\partial_{y^0}h(0, y^1) = 0$, and then it follows from the
explicit form \eqref{psi.def} of $\psi$ that $\frac{\partial \psi^k}{\partial y^0}(0, y^1, \yn) 
= O(|\yn|)$ for $k=1,2,3$.
Thus for $y^0=0$,
\be
|D_{y^0}\phi|^2 
\le C |\yn|^2 \sum_{k=1}^3 |(D_{x^k} \varphi)\circ \psi|^2 
\le C |\yn|^2 \sum_{k=1}^3 |D_{y^k}\phi|^2
\overset{\eqref{spatial}}
\le 
C |\yn|^2  \ |D_\nu \phi|^2.
\label{tmprl1}
\ee
Similarly, \eqref{explicit.pullback}, the temporal gauge, and the initial conditions  imply that
for $y^0=0$,
\[
F_{\a\b} = \sum_{i,j=1}^3 \frac{\partial \psi^i}{\partial y^\alpha}\frac{\partial \psi^j}{\partial y^\b}
( \frac {\partial \A_i}{\partial x^j} - \frac{\partial \A_j}{\partial x_i})\circ \psi,
\]
so, again using the fact that $\frac{\partial \psi^k}{\partial y^0}(0, y^1, \yn) 
= O(|\yn|)$, we see that
\be
F_{0k}^2 
\le 
C|\yn|^2 \sum_{i,j=1}^3 \F_{ij}^2\circ \psi 
\le 
C|\yn|^2 \sum_{i,j=1}^3 F_{ij}^2 
\overset{\eqref{spatial}}
\le C |\yn|^2 |F_\nu|^2,
\qquad k=1,2,3.
\label{tmprl2}
\ee
Combining \eqref{spatial}, \eqref{tmprl1} and \eqref{tmprl2}, and recalling
\eqref{pos2}, we find that
\[
e_{\e,\lambda}(U, g)(0, y^1, \yn)  \leq (1+ C\abs{\yn}^2) e^\nu_{\e,\lambda}(\tilde U^m_{\e,\lambda})(\yn)
\]
for all $y^1\in S^1$.
We have chosen $U^m_{\e,\lambda}$ exactly so that
it satisfies  $\int_{\R^2}e^\nu_{\e,\lambda}(U^m_{\e,\lambda}) = \calE^m_\lambda$, 
so we can use  \eqref{tildeU.2}, \eqref{tildeU.3}, together with \eqref{scale2} and a change of variables, to find  that
\begin{align*}
\tilde \zeta_1(0)&\le C\e^2+C\int_{S^1 \times B_\nu(\rho_1/2)}  \abs{\yn}^2 
e^\nu_{\e,\lambda}(\tilde U^m_{\e,\lambda})(\yn)
dy^1d\yn \\
&\le C \e^2 + C \e^2 \int_{\R^2}|\yn|^2 e^\nu_{1,\lambda}(U^m_{1,\lambda})  d\yn\\
&
\le C \e^2.
\end{align*}
The finiteness of the second moment in the last inequality follows from 
the  exponential decay estimate  \eqref{exp_decay}.

Next, since $|\omega(U)|\le C(\lambda)e^\nu_{\e,\lambda}(U)$, and since
$\int_{\R^2} \omega(U^m_{\e,\lambda}) = \pi m$,  one can check, again
using \eqref{scale2}, \eqref{tildeU.2}, \eqref{tildeU.3}, 
(recall also  the definitions \eqref{D.def}, \eqref{defect}, \eqref{f.def})
that
\[
\tilde 
\zeta_2(0) \le C \e^3 \le C \e^2,
\]
where the scaling $\e^3$ comes ultimately from \eqref{f.def}.

{\bf 3}.  Proposition \ref{mainprop2} now implies
\be\label{4t1}
\tilde \zeta_i(s) \le C \e^2
\mbox{ for $=1,2,3$ and $-T_1\le s\le T_1$},
\ee
and
\be\label{4t1.c2}
\tilde \zeta_4(t) \le C \e^2 \mbox{ for }-T_0\le t \le T_0.
\ee
In particular, \eqref{4t1.c2} implies \eqref{T1.c2}. 
Next, by \eqref{comparable}, \eqref{Jpsi}, and the definition of $d^\nu$,
\begin{align*}
\int_{\mathcal N_1} (d^{\nu})^{2}e_{\e,\lambda}(\mathcal U,\eta) dx dt &\leq C \int_{(-T_{1},T_{1})\times S^{1}\times B_{\nu}(\rho_{1}/2)}\abs{y^{\nu}}^{2}e_{\e,\lambda}(U) dy\\
&\leq 
C\int_{-T_{1}}^{T_{1}}\tilde \zeta_{3}(s) \,ds \\
&\overset{\eqref{4t1}}{\leq} 
C\e^2.
\end{align*}
proving \eqref{T1.c3}.
(In fact the estimate of $\tilde \zeta_3$ in \eqref{4t1} is substantially stronger than 
\eqref{T1.c3}.)

{\bf 4}.
To establish \eqref{T1.c1}, we carry out a gauge transform to arrange that
\be\label{normal_temp}
A_{0}=0
\quad \mbox{ in }\ \ \psi^{-1}(\calN_1) \subset (-T_1,T_1)\times S^1\times B_\nu(\rho_1/2) .
\ee
Toward this end,  let $\chi$ be a smooth function with support in Image$(\psi)$
such that $\chi=1$ in $\calN_1$, and define
\[
v = (1- \chi) e_0 + \chi \hat v,\qquad e_0 := (1,0,0,0), \qquad
\hat v^\mu := \frac {\partial\psi^\mu}{\partial y^0}\circ \psi^{-1}.
\]
Then from \eqref{explicit.pullback} we see that \eqref{normal_temp} holds
if and only if $v^\mu \A_\mu = 0$ in $\calN_1$. To arrange this,
let $f$ satisfy the linear transport equation
\[
v^\mu( \partial_\mu f +\A_\mu) = 0 \ 
\mbox{ in }\ (-T_0,T_0)\times \R^3, \quad\qquad f(0,x) = 0 \mbox{ for }x\in \R^3.
\]
(The condition $g_{00}<0$ implies that $\hat v$ and hence $v$ are timelike; thus  $v^0$ never vanishes, so the above initial value problem is solvable.) Then after the gauge transform
$
( \vp,\A) \to  (e^{if}\vp, \A+df)$,
the equation that defines $f$ states exactly that the new connection $1$-form satisfies
$v^\mu  \A_\mu=0$. Thus we have achieved \eqref{normal_temp}.
Also, since $\tilde \zeta_{i}, i=1,\dots 4$ are gauge invariant, \eqref{4t1}-\eqref{4t1.c2} still hold.

Recall the form of $U^{\scriptsize N\!O} = (\phi^{\scriptsize N\!O}, A^{\scriptsize N\!O})$:
\[
\phi^{\mbox{\scriptsize N\!O}}(\yt, \yn) = 
\phi^m(\yn),
\qquad
A^{\mbox{\scriptsize N\!O}}(\yt, \yn)
:=  A^{m}_1(\yn) dy^{\nu1} +  A^{m}_2(\yn) dy^{\nu2},
\]
where $U^m = (\phi^m, A^m) = U^m_{\e,\lambda}$  is a minimizer.
We stipulate that $U^m_{\e,\lambda}$ is {exactly} the same minimizer
out of which $(\tilde\phi^m_{\e,\lambda},\tilde A^m_{\e,\lambda})$ is constructed
in Step 1. 
Then by a change of variables and a Poincar\'e inequality  we have 
\begin{align}
\int_{\calN_{1}} 
| \vp-  \vp^{\scriptsize N\!O}|^2& +
\e^2 \sum_{\a=0}^3 | \A_{\a} - \A_{\a}^{\scriptsize N\!O} |^2
\nonumber\\
&\le C 
\int_{(-T_1,T_{1})\times S^{1}\times B_{\nu}(\rho_{1}/2)}
| \phi-  \phi^{\scriptsize N\!O}|^2 +
\e^2 \sum_{\a=0}^3 | A_{\a} - A_{\a}^{\scriptsize N\!O} |^2
\nonumber\\
&\le C 
\int_{(-T_1,T_{1})\times S^{1}\times B_{\nu}(\rho_{1}/2)}
|\partial_{y^0}( \phi-  \phi^{\scriptsize N\!O}) |^2 +
\e^2 \sum_{\a=0}^3 | \partial_{y^0}(A_{\a} - A_{\a}^{\scriptsize N\!O}) |^2
\label{L2est.4}\\
&
\qquad\qquad\qquad\qquad
\ + \ 
\left.C\left(\int_{S^{1}\times B_{\nu}(\rho_{1}/2)}
|\phi-  \phi^{\scriptsize N\!O} |^2 +
\e^2 \sum_{\a=0}^3 |A_{\a} - A_{\a}^{\scriptsize N\!O} |^2\right)\right|^{y^0=T_1}_{y^0=-T_1}.\nonumber
\end{align}
In the first integral on the right-hand side we use the
explicit form of $U^{\scriptsize N\!O}$ and the gauge
$A_0 = 0$ to write
\[
|\partial_{y^0}( \phi-  \phi^{\scriptsize N\!O}) |^2 +
\e^2 \sum_{\a=0}^3 | \partial_{y^0}(A_{\a} - A_{\a}^{\scriptsize N\!O}) |^2
= |D_0 \phi|^2 + \e^2 |F_\tau|^2.
\]
Inserting this into \eqref{L2est.4} and using \eqref{4t1}, we conclude the first integral is bounded by $C\e^2$.
To bound the second integral, using fundamental theorem of calculus we observe 
\begin{align*}
&\left.\left(\int_{S^{1}\times B_{\nu}(\rho_{1}/2)}
|\phi-  \phi^{\scriptsize N\!O} |^2 +
\e^2 \sum_{\a=0}^3 |A_{\a} - A_{\a}^{\scriptsize N\!O} |^2\right)\right|_{y^0=T_1}\\
&\quad \leq C\left.\left(\int_{S^{1}\times B_{\nu}(\rho_{1}/2)}
|\phi-  \phi^{\scriptsize N\!O} |^2 +
\e^2 \sum_{\a=0}^3 |A_{\a} - A_{\a}^{\scriptsize N\!O} |^2\right)\right|_{y^0=0}\\
&\qquad +  C\int_0^{T_1} \int_{S^{1}\times B_{\nu}(\rho_{1}/2)}\abs{\partial_{y^0}\phi}^2 +\e^2\sum_{\a=0}^3(\partial_{y^0}A_\alpha)^2.
\end{align*}
The second integral is bounded again using $A_0=0$ and \eqref{4t1}, whereas $C\e^2$ bounds for the first integral follow from the construction of the data. 
The boundary term $y^0=-T_1$ is treated exactly the same.  This gives  \eqref{T1.c1}.
\end{proof}

\bibliography{ahm}
\bibliographystyle{plain}
 

\end{document}